\documentclass{article}

\usepackage{graphicx} 

\usepackage[a4paper, total={6in, 8in}]{geometry}
\usepackage{fancyhdr}
\usepackage{pdfpages}
\usepackage{hyperref}
\usepackage{caption}
\hypersetup{colorlinks = true, linkcolor = purple, citecolor= blue}
\usepackage[toc,page]{appendix}
\usepackage{dsfont}
\usepackage[utf8]{inputenc}
\usepackage{bbm}
\usepackage{amsmath}
\usepackage[normalem]{ulem}
\usepackage{amsfonts}
\usepackage{amssymb}
\usepackage{amsthm}
\usepackage{graphicx} 
\usepackage{subcaption}
\usepackage[toc,page]{appendix}
\newtheorem*{theorem*}{Theorem}
\newtheorem{theorem}{Theorem}[section]

\newtheorem{lemma}[theorem]{Lemma}
\newtheorem{definition}[theorem]{Definition}
\newtheorem{remark}[theorem]{Remark}
\newtheorem*{remark*}{Remark}
\newtheorem{proposition}[theorem]{Proposition}
\usepackage{enumitem}
\usepackage[section]{placeins}
\usepackage{float}
\usepackage[toc,page]{appendix}

\usepackage[utf8]{inputenc}
\usepackage{newunicodechar}
\newunicodechar{̀}{\`{}}

\DeclareUnicodeCharacter{2217}{*}

\usepackage[
  backend=biber,
  style=numeric,
  sortcites=true,
  defernumbers=true,
  giveninits=true,   
  eprint=false,
  url=false,         
  doi=false,         
  isbn=false         
]{biblatex}
\renewcommand{\epsilon}{\varepsilon}

\usepackage{todonotes}

\title{Observability properties of the singular Grushin equation}
\author{Roman Vanlaere \footnote{CEREMADE, Université Paris-Dauphine PSL, CNRS UMR 7534, 75016 Paris, France, roman.vanlaere@dauphine.psl.eu}}

\date{January, 2026}

\begin{document}

\maketitle

\begin{abstract}
    We study the observability properties of the Grushin equation with an inverse square potential, whose singularity occurs at the boundary of two-dimensional rectangular domains or in the interior of the domain in higher dimensions. In some specific configurations of the observation set, we obtain the exact minimal time of observability. The analysis we present relies on recent Carleman estimates obtained by K. Beauchard, J. Dardé, and S. Ervedoza. As a byproduct of these results, we observe, for the heat equation associated to the Laplace-Beltrami operator on almost-Riemannian manifolds, a dependence of the minimal time of observability on the dimension of the singularity. 
\end{abstract}

\tableofcontents

\section{Introduction}

The goal of this paper is to study the observability properties of the heat equation associated with Grushin-type operators perturbed by an inverse-square potential. Such properties were previously studied in \cite{cannarsa2014null,anh2016null}, where the authors proved the existence of a minimal time of observability for certain configurations of the observation set. We give an explicit characterization of this minimal time in specific geometric settings, highlighting its dependence on both the observation set and the strength of the singularity. \\

We also study a generalization of these Grushin-type operators in the two-dimensional setting and provide a lower bound on the minimal observability time. This bound also depends on both the observation set and the strength of the singularity, and is expected to be optimal.

\subsection{Setting and main results for the classical operator}\label{section: introduction classical operator}

Let $\Omega = \Omega_x \times \Omega_y$ be a connected compact manifold with smooth boundary, with $\Omega_x \subset \mathbb{R}^{d_x}$ a bounded domain such that $d_x = 1$ or $d_x \geq 3$, and $\Omega_y$ a compact Riemannian manifold of dimension $d_y \geq 1$ possibly with boundary. The main results of this section hold under the following assumptions. 
\begin{enumerate}[label = {$\operatorname{H}_{\arabic*}$}, ref = $\operatorname{H}_{\arabic*}$]
    \item \label{assumption: d = 1} If $d_x = 1$, then $\Omega_x = (0,L)$ for some $L > 0$.
    \item \label{assumption: d > 3} If $d_x \geq 3$, then $0 \in \Omega_x$.
\end{enumerate}

In our results, we do not address the case $d_x = 2$, in which case one should take $0 \in \partial \Omega_x$ (see the well-posedness Section \ref{section: well-posedness}), for technical reasons regarding the spectral analysis (see Section \ref{section: spectral analysis classical d > 3} for the spectral analysis, and Section \ref{section: additional comments} for comments on this matter).\\

However, the other tools used in the present work remain applicable in the case $d_x = 2$ provided that $0 \in \partial \Omega_x$. \\

We shall use the notation $(x,y) \in \Omega$ with $x = (x_1,...,x_{d_x}) \in \Omega_x$ and $y \in \Omega_y$, and denote by $\Delta_y$ the Laplace-Beltrami operator on $\Omega_y$, and by $\Delta_x$ the usual Laplace operator on $\Omega_x$.\\

Let $T > 0$. Our first system of interest is the following classical Grushin-type equation with a singular potential, 
\begin{align} \label{stm : adjoint system classical}
\left\{ \begin{array}{lcll}
     \partial_t f - \Delta_xf - |x|^{2\gamma}\Delta_y f + \displaystyle\frac{\nu^2-\mathsf{H}}{|x|^2}f & = & 0, & (t,x,y) \in (0,T) \times \Omega,\\[6pt]
     f(t,x,y) & = & 0, & (t,x,y) \in (0,T) \times \partial \Omega, \\[6pt]
     f(0,x,y) & = & f_0(x,y), & (x,y) \in \Omega,
    \end{array} \right.
\end{align}
where $f_0 \in L^2(\Omega)$, $\mathsf{H}$ is the Hardy constant \eqref{eqn: definition hardy constant} given below, which depends on the geometric setting for $\Omega_x$ (see \textit{e.g.} \cite{hardy1952inequalities,cazacu2014controllability}),
\begin{align}\label{eqn: hardy inequality}
    \mathsf{H}\int_{\Omega_x} \frac{|u|^2}{|x|^2} \ dx \leq \int_{\Omega_x}  |\nabla_xu|^2 \ dx, \quad \text{for every } u \in H_0^1(\Omega_x),
\end{align}
where 
\begin{align}\label{eqn: definition hardy constant}
    \begin{array}{cccl}
      \mathsf{H}  &=& d_x^2/4 & \text{if } 0 \in \partial\Omega_x, \\[6pt]
      \mathsf{H}  &=& (d_x - 2)^2/4 & \text{if } d_x \geq 3 \text{ and } 0 \in \Omega_x,
    \end{array}
\end{align}
and $\gamma \geq 1$ and $\nu > 0$ are fixed parameters. The constant $\mathsf{H}$ given in \eqref{eqn: definition hardy constant} is the best constant such that \eqref{eqn: hardy inequality} holds. When $0 \in \Omega_x$, the formula $\mathsf{H} = (d_x - 2)^2/4$ also applies for $d_x = 2$, but gives $\mathsf{H} = 0$, and \eqref{eqn: hardy inequality} reduces to a trivial inequality. This is why when $0 \in \Omega_x$ we required $d_x \geq 3$ in \eqref{eqn: definition hardy constant}, so that $\mathsf{H} > 0$. Well-posedness of system \eqref{stm : adjoint system classical} is related to Hardy's inequalities \eqref{eqn: hardy inequality}.\\

System \eqref{stm : adjoint system classical} is motivated by the following, and we refer to Section \ref{section: additional comments} for extended discussions. If $d_x = 1$, and we assume without loss of generality that $\Omega_y \subset \mathbb{R}^{d_y}$ is a smooth bounded domain, a model almost-Riemannian structure on $\Omega$ is generated by the vector fields $X = \partial_x$, and $Y_i = x^{\gamma} \partial_{y_i}$, $1 \leq i \leq d_y$. The corresponding Laplace-Beltrami operator is given by 
\begin{align*}
    \Delta_\gamma f = -\partial_x^2 f - |x|^{2\gamma}\Delta_y f + \frac{\gamma d_y}{x}\partial_x f,
\end{align*}
on $L^2(\Omega, x^{-\gamma d_y}dxdy)$, with $\Delta_y$ the usual Laplace operator. The change of variable $f = |x|^{\gamma d_y/2}g$ leads to 
\begin{align*}
    L_\gamma g = -\partial_x^2 g - |x|^{2\gamma}\Delta_y g + \frac{\gamma d_y}{2} \left( \frac{\gamma d_y}{2} +1 \right) \frac{g}{x^2},
\end{align*}
on $L^2(\Omega)$ for the Lebesgue measure. In the case $d_x = 1$, the operator $-\partial_x^2  - x^{2\gamma}\Delta_y + (\nu^2 - 1/4)/x^2$ is therefore simply a generalization of $L_\gamma$, dissociating the effects of the degenerate and singular terms, and allowing to consider a broader range of operator similar to $L_\gamma$. We then study a natural generalization of these $(d_y+1)$-dimensional operators in higher dimensions for $d_x$.

\begin{definition}[Observability]
Let $T>0$ and $\omega \subset \Omega$ an open set. We say that system \eqref{stm : adjoint system classical} is observable from $\omega$ in time $T > 0$ if there exists $C>0$ such that, for every $f_0 \in L^2(\Omega)$, the associated solution satisfies
\begin{align}\label{eqn: observability inequality Omega}
\int_\Omega |f(T,x,y)|^2 \ dx \ dy \leq C \int_0^T \int_{\omega} |f(t,x,y)|^2 \ dx \ dy \ dt.
\end{align}
\end{definition}

We denote the minimal time of observability by 
\begin{align}\label{eqn: def T(omega)}
    T(\omega) &:= \inf \{ T > 0, \text{ such that the system is observable in time $T$ from $\omega$} \}.
\end{align}
Observe that $T(\omega)=\infty$ if and only if the system of interest is not observable independently of the final time $T>0$, and $T(\omega) = 0$ if and only if the system of interest is observable in any time. Our first two results are the following. 

\begin{theorem}\label{thm: main theorem control classical grushin rectangle vertical}
    Consider system \eqref{stm : adjoint system classical} with $\gamma = 1$ and $\nu > 0$, and assume either \ref{assumption: d = 1} or \ref{assumption: d > 3}. Set $\omega = \omega_x \times \Omega_y \subset \Omega$ to be an open set such that $0_{\mathbb{R}^{d_x}} \notin \overline{\omega}_x \subset \Omega_x$. Then, the minimal time of observability of system \eqref{stm : adjoint system classical} from $\omega$ satisfies 
    \begin{align}
        T(\omega) \geq \frac{\operatorname{dist}(0_{\mathbb{R}^{d_x}},\omega_x)^2}{4(1+\nu)}.
    \end{align}   
    If moreover $\omega_x$ satisfies that there exists a smooth domain $\mathcal{O} \subset\subset \Omega_x$ containing $0_{\mathbb{R}^{d_x}}$ such that $\omega_x \subset \Omega_x \setminus \mathcal{O}$ and $\partial \mathcal{O} \subset \partial \omega_x$ (see Figure \ref{fig: geometric_assumption}), then
    \begin{align}
        T(\omega) \leq \frac{\sup \{|x|^2, x \in \partial\mathcal{O}\}}{4(1+\nu)}.
    \end{align}
\end{theorem}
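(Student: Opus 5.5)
The plan has two parts: a lower bound from localized eigenfunctions of the $y$-Fourier decomposition, and an upper bound from a Carleman estimate for the Fourier components that is uniform in the frequency.

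\textbf{Lower bound.} I decompose in the eigenbasis $(\phi_n)$ of $-\Delta_y$ on $\Omega_y$ with Dirichlet or empty boundary condition, with eigenvalues $\lambda_n = \mu_n^2\to\infty$. Writing $f=\sum_n f_n(t,x)\phi_n(y)$, each $f_n$ solves
\[
\partial_t f_n - \Delta_x f_n + \mu_n^2|x|^2 f_n + \frac{\nu^2-\mathsf H}{|x|^2}f_n=0 .
\]
After radial reduction (spherical harmonics of degree $0$ when $d_x\ge3$), the radial part $u(r)=r^{(d_x-1)/2}f_n$ satisfies $-u''+\mu^2 r^2u+(\nu^2-1/4)u/r^2$ on $(0,R)$. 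On the half-line this is the Laguerre-type harmonic oscillator with ground state
\[
u_\mu(r)=r^{1/2+\nu}e^{-\mu r^2/2}, \qquad \text{eigenvalue } 2\mu(1+\nu).
\]
The ground state on the bounded domain differs from this by an exponentially small error, $\lambda_\mu = 2\mu(1+\nu)+O(e^{-c\mu})$. I will take this from the spectral analysis section (Section \ref{section: spectral analysis classical d > 3}), or otherwise obtain it by an Agmon or cutoff argument.

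I then test the observability inequality on $f_0=v_\mu(x)\phi_n(y)$, where $v_\mu$ is the ground state. The left side is $e^{-2\lambda_\mu T}$. The right side is at most
\[
T\int_{\omega_x}|v_\mu|^2\lesssim T\,e^{-\mu\,\mathrm{dist}(0,\omega_x)^2}\,\mu^{C}.
\]
Observability would force $e^{-4\mu(1+\nu)T}\le C\mu^{C}e^{-\mu\delta^2}$ as $\mu\to\infty$, with $\delta=\mathrm{dist}(0,\omega_x)$. This gives $T\ge \delta^2/(4(1+\nu))$. One caveat: the sharp decay of $v_\mu$ on $\omega_x$ needs an Agmon estimate of the form $|v_\mu(x)|\lesssim \mu^C e^{-\mu|x|^2/2}$. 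This holds because $|x|^2/2$ is the Agmon distance for the potential $\mu^2|x|^2$.

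\textbf{Upper bound.} I plan to prove, for each frequency $\mu$, a uniform observability estimate
\[
\|f_n(T)\|^2\le C e^{2\mu(1+\nu)T'}\int_0^T\!\!\int_{\omega_x}|f_n|^2 ,
\]
where $T'$ can be taken slightly larger than $S^2/(4(1+\nu))\cdot$ (scaled appropriately) and $S=\sup_{\partial\mathcal O}|x|$. The approach is to apply the Carleman estimates of Beauchard--Dardé--Ervedoza, which the abstract says the paper relies on, to the operator $-\Delta_x+\mu^2|x|^2+(\nu^2-\mathsf H)/|x|^2$. The weight is built from the ground-state exponent $\mu|x|^2/2$, and the cost is $e^{\mu\sup_{\partial\mathcal O}|x|^2/2}$ times the observation inside $\mathcal O^c\cap\omega_x$. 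The geometric condition $\partial\mathcal O\subset\partial\omega_x$ is what lets the Carleman boundary terms on $\partial\mathcal O$ be absorbed by the observation. Outside $\mathcal O$ the operator is uniformly parabolic and nonsingular, so the classical Fursikov--Imanuvilov estimates apply there with cost $e^{C\mu}$, and this cost is handled the same way.

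I then combine the uniform estimate with the dissipation $\|f_n(T)\|\le e^{-\lambda_\mu (T-T')}\|f_n(T')\|$. Since $\lambda_\mu\approx 2\mu(1+\nu)$, the exponential factors cancel as soon as $T>S^2/(4(1+\nu))$. The low frequencies are handled by standard parabolic observability. Summing over $n$ uses orthogonality in $y$, because $\omega=\omega_x\times\Omega_y$.

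\textbf{Main obstacle.} The main obstacle is obtaining the Carleman-based per-frequency estimate with the sharp exponential cost $e^{\mu S^2/2}$, uniformly in $\mu$ and in the presence of the singular potential at $0$. Near $0$ one must use the Hardy inequality \eqref{eqn: hardy inequality}, which stays coercive since $\nu>0$. I would first try a weight of the form $\varphi=\mu(|x|^2-S^2)/2$ inside $\mathcal O$, taking the precise BDE estimate as given.
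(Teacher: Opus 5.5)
Your lower bound follows the paper's argument: test on $e^{-\lambda_{n,0}t}u_{n,0}(x)\phi_n(y)$, use $\lambda_{n,0}\sim 2\xi_n(1+\nu)$, and use the Agmon estimate with $d_{\operatorname{agm}}(x)=|x|^2/2$. One caveat: for a general $\Omega_x$ with $d_x\ge 3$ the ground state is not radial. The paper therefore gets the eigenvalue asymptotics from domain monotonicity $B(0,L)\subset\Omega_x\subset B(0,L')$ (Proposition~\ref{prop: first eigenvalue in high dimension}). It then uses only the $L^2$ Agmon bound of Proposition~\ref{prop: agmon decay grushin general}, for which $\lambda_{n,0}=o(\xi_n^2)$ suffices.

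The upper bound, however, has a genuine gap at the two places where you glue things together.

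\textbf{Exterior of $\mathcal{O}$.} Per Fourier mode, $\mu^2|x|^2$ is a potential of size $\mu^2$. Classical Fursikov--Imanuvilov or Ervedoza estimates then give a cost of order $\exp(C\mu^{4/3})$, not $e^{C\mu}$; the paper makes exactly this remark in Section~\ref{section : sketch of proof carleman}. No dissipation factor $e^{-4\mu(1+\nu)T}$ compensates that. Even a genuine $e^{C\mu}$ with an uncontrolled constant is not ``handled the same way'': it only gives $T(\omega)\le \max(S^2,C)/(4(1+\nu))$. To keep the sharp time, the exterior cost must be $e^{\epsilon\mu}$ with $\epsilon$ arbitrarily small. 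That requires a weight tuned to the positive potential. The appendix does this with the choice $\psi_\xi=A\xi\theta+\theta-\sqrt{\xi}\,\theta(\cdots)$ in Proposition~\ref{prop: appendix carleman beauchard}, and only for $d_x=1$.

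\textbf{Interface $\partial\mathcal{O}$.} Here $f_n$ does not vanish on $\partial\mathcal{O}$, and $\omega_x$ is open with $\partial\mathcal{O}\subset\partial\omega_x$, so $\omega_x$ contains no point of $\partial\mathcal{O}$. A Carleman estimate on $\mathcal{O}$ therefore produces boundary terms involving $\partial_t g$, tangential derivatives and $g$ itself, not only $\partial_\nu g$. Interior observation on $\omega_x$ cannot absorb these.

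\textbf{How the paper avoids both problems.} It enlarges $\mathcal{O}$ to $\mathcal{O}_\epsilon$; the hypothesis $\partial\mathcal{O}\subset\partial\omega_x$ is used precisely to guarantee $\partial\mathcal{O}_\epsilon\subset\omega_x$. It applies the boundary Carleman estimate of Proposition~\ref{prop: boundary carleman classical} only to the Dirichlet problem on $\mathcal{O}_\epsilon$. That estimate uses the time-dependent weight $\frac{\xi}{2}(L^2-|x|^2)\coth(2\xi t)$ and gives the squared-norm cost $C\xi e^{L^2/2T_0}e^{\xi L^2}$ with $L=\sup_{\mathcal{O}_\epsilon}|x|$. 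This cost is beaten by $e^{-4\xi(1+\nu)(T-T_0)}$ once $T>L^2/(4(1+\nu))$. The paper then dualizes to boundary null-controllability of the full system on $\mathcal{O}_\epsilon\times\Omega_y$. Cutoff extension arguments turn this into an internal control supported in $(\mathcal{O}_\epsilon\setminus\mathcal{O})\times\Omega_y$. The exterior is treated for the full operator in $(x,y)$, which is uniformly parabolic there, so no Fourier decomposition and no $\mu$-dependent cost enters. Finally $\epsilon\to 0$.

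\textbf{Two smaller slips.} First, the squared-norm cost from $e^{-2\varphi(T)}$ is $e^{\mu S^2}$, i.e. $e^{4\mu(1+\nu)T'}$ with $T'=S^2/(4(1+\nu))$, not $e^{2\mu(1+\nu)T'}$. You also mix squared and unsquared norms. Taken literally, your display would give $T>S^2/(8(1+\nu))$, which contradicts the lower bound when $\omega_x$ is a ring. Second, a stationary weight $\mu(|x|^2-S^2)/2$ cannot yield a Carleman estimate. The weight needs a time factor that blows up at $t=0$, such as $\coth(2\mu t)$, to eliminate the initial data.
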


\begin{theorem}\label{thm: main theorem control classical grushin rectangle vertical 2}
    Consider system \eqref{stm : adjoint system classical} with $\gamma > 1$ and $\nu > 0$, and assume either \ref{assumption: d = 1} or \ref{assumption: d > 3}. Set $\omega = \omega_x \times \Omega_y \subset \Omega$ to be an open set such that $0_{\mathbb{R}^{d_x}} \notin \overline{\omega_x} \subset \Omega_x$. Then, system \eqref{stm : adjoint system classical} is never observable from $\omega$, \textit{i.e.}
    \begin{align}
        T(\omega) = + \infty.
    \end{align}
\end{theorem}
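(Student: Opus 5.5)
We plan to disprove observability by building, for large frequencies, explicit solutions concentrated near the singular set $\{x=0\}$, in the spirit of the classical Grushin counterexamples. We decompose in the eigenbasis of $-\Delta_y$ on $\Omega_y$, writing $-\Delta_y \phi_n = \mu_n \phi_n$ with $\mu_n \to \infty$, and look for solutions of the form $f(t,x,y) = e^{-\lambda_n t} v_n(x)\phi_n(y)$. Here $v_n$ is the first eigenfunction of the one-parameter family
\begin{align*}
A_n v = -\Delta_x v + \mu_n |x|^{2\gamma} v + \frac{\nu^2 - \mathsf{H}}{|x|^2} v \quad \text{on } \Omega_x,
\end{align*}
with Dirichlet conditions, and $\lambda_n$ is its first eigenvalue. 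If $\Omega_y$ has boundary, the $\phi_n$ are the Dirichlet eigenfunctions, and the construction is unchanged. Since $\omega = \omega_x \times \Omega_y$, orthonormality of the $\phi_n$ reduces the observability inequality applied to such solutions to
\begin{align*}
e^{-2\lambda_n T}\|v_n\|_{L^2(\Omega_x)}^2 \leq C \int_0^T e^{-2\lambda_n t}\,dt \int_{\omega_x}|v_n|^2 \,dx \leq \frac{C}{2\lambda_n}\int_{\omega_x}|v_n|^2\,dx .
\end{align*}
It therefore suffices to show that $\lambda_n$ grows at most polynomially in $n$, while $\int_{\omega_x}|v_n|^2/\|v_n\|^2$ decays faster than any $e^{-c\lambda_n}$.

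We first study the spectral asymptotics. We take $n$ large and rescale by $x = \mu_n^{-1/(2\gamma+2)} z$. Then $A_n$ becomes $\mu_n^{1/(\gamma+1)}$ times the operator $-\Delta_z + |z|^{2\gamma} + (\nu^2-\mathsf{H})|z|^{-2}$ on the dilated domain $\mu_n^{1/(2\gamma+2)}\Omega_x$. For $d_x = 1$ this is the half-line $(0,\infty)$, and for $d_x \geq 3$ it is all of $\mathbb{R}^{d_x}$ containing $0$. The limiting operator has compact resolvent and a positive ground state $\Psi$. We then get $\lambda_n \sim c_0\, \mu_n^{1/(\gamma+1)}$, with the comparison justified by truncating $\Psi$ and by min-max using the Hardy inequality (\ref{eqn: hardy inequality}) for the lower bound. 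In particular $\lambda_n \ll \mu_n^{1/2}$, because $\gamma > 1$. For $\gamma = 1$ the two rates coincide, which explains why only a finite minimal time appears in Theorem \ref{thm: main theorem control classical grushin rectangle vertical}.

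Next comes the localization estimate, which we expect to be the main obstacle. We must show that the true eigenfunction $v_n$, not only the model $\Psi$, satisfies
\begin{align*}
\int_{\omega_x}|v_n|^2 \leq C e^{-c\,\mu_n^{1/2}\delta^{\gamma+1}}\|v_n\|^2, \qquad \delta = \operatorname{dist}(0,\omega_x) > 0 .
\end{align*}
Our first approach is an Agmon estimate. We use the weight $\varphi(x) = (1-\epsilon)\sqrt{\mu_n}\,|x|^{\gamma+1}/(\gamma+1)$, whose gradient satisfies $|\nabla\varphi|^2 \leq (1-\epsilon)^2 \mu_n |x|^{2\gamma}$. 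Testing $A_n v_n = \lambda_n v_n$ against $e^{2\varphi}v_n$ gives
\begin{align*}
\int \left(|\nabla(e^\varphi v_n)|^2 + \left(\mu_n|x|^{2\gamma} - |\nabla\varphi|^2 - \lambda_n + \tfrac{\nu^2-\mathsf{H}}{|x|^2}\right)e^{2\varphi}|v_n|^2\right) = 0 .
\end{align*}
The Hardy inequality absorbs the singular term even when $\nu^2 < \mathsf{H}$. Indeed, because $\nu > 0$ we can keep a fraction of the gradient term, write $\nu^2 - \mathsf{H} = -(1-\theta)\mathsf{H} + (\nu^2 - \theta\mathsf{H})$, and choose $\theta$ small. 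The potential term is then positive outside the ball $|x| \leq C(\lambda_n/\mu_n)^{1/(2\gamma)} \to 0$, and it is bounded inside that ball by $\lambda_n e^{2\sup\varphi} = \lambda_n e^{O(1)}$. The boundary terms vanish thanks to the Dirichlet condition. These steps yield $\int_{\omega_x} e^{2\varphi}|v_n|^2 \leq C\lambda_n\|v_n\|^2$, which is the bound above.

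We then conclude. The observability inequality would force $e^{-2\lambda_n T} \leq C' \lambda_n^{-1} e^{-c\sqrt{\mu_n}}$. Since $\lambda_n \sim c_0 \mu_n^{1/(\gamma+1)} = o(\sqrt{\mu_n})$, this fails as $n \to \infty$ for every $T > 0$. Hence $T(\omega) = +\infty$.
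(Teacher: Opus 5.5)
Your proposal is correct and follows essentially the same route as the paper. You decompose in $y$ and test the uniform observability inequality on the ground-state solutions. You bound the first eigenvalue by $C\mu_n^{1/(\gamma+1)}$ (with $\mu_n=\xi_n^2$) via dilation and min-max. You then prove an Agmon estimate in which the Hardy inequality absorbs the singular term, giving decay $e^{-c\sqrt{\mu_n}}$ on $\omega_x$, which beats $e^{2\lambda_n T}$ because $\gamma>1$. The differences are only technical: for $d_x\ge 3$ the paper gets the eigenvalue bound by a radial reduction on balls plus domain monotonicity, and its Agmon estimate uses the regularized distance $d_{\operatorname{agm},\delta}(\cdot,F_\delta)$ under the sole hypothesis $\lambda_\xi=o(\xi^2)$. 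You instead rescale directly to the whole-space model and use the explicit weight $(1-\epsilon)\sqrt{\mu_n}\,|x|^{\gamma+1}/(\gamma+1)$.
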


\begin{figure}[H]
    \centering
    \includegraphics[width=0.35\linewidth]{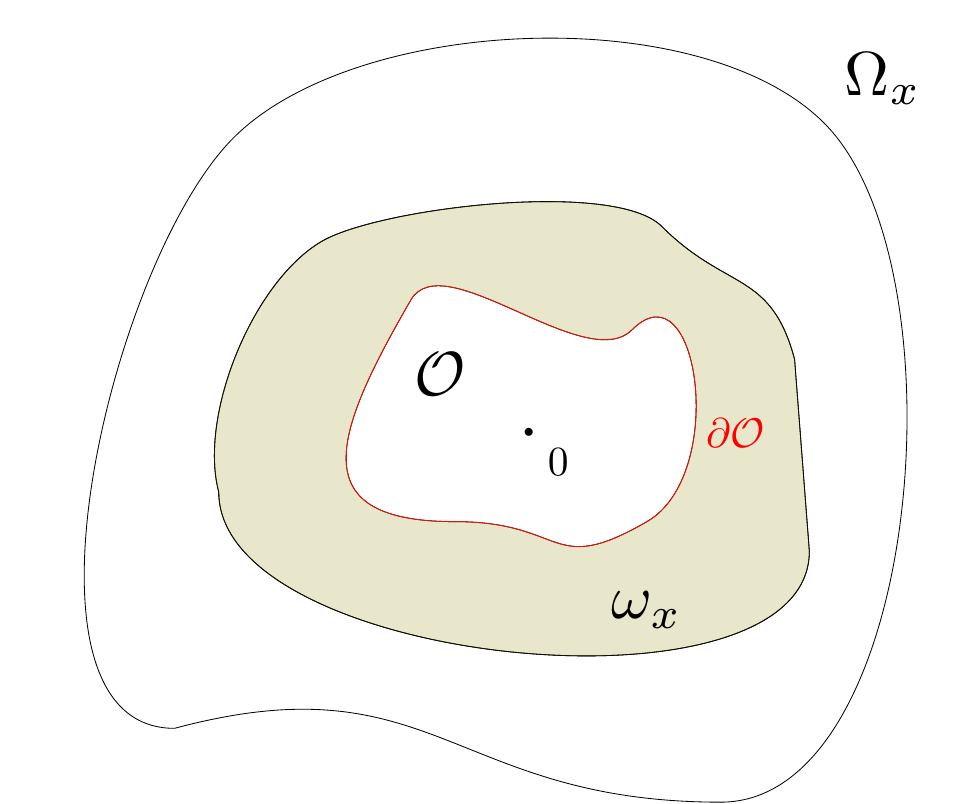}
    \caption{The geometric assumption on the observation set in Theorem \ref{thm: main theorem control classical grushin rectangle vertical}: the observation set is in green, whose boundary contains the boundary in red of a subdomain $\mathcal{O}$ containing $0$.}
    \label{fig: geometric_assumption}
\end{figure}

A direct corollary of Theorem \ref{thm: main theorem control classical grushin rectangle vertical} is that if $\gamma = 1$ and there exist $0<r<R$ such that $\omega = \{ r < |x| < R, x \in \Omega_x \} \times \Omega_y$, then
\begin{align*}
    T(\omega) = \frac{r^2}{4(1+\nu)}.
\end{align*}
Thus, the minimal time is obtained under the geometric restriction that $\omega_x$ forms a ring around $0$. We discuss this restriction in Section \ref{section : sketch of proof carleman}, and we conjecture that the lower bound in Theorem \ref{thm: main theorem control classical grushin rectangle vertical} is sharp for any open subsets $\omega_x \in \Omega_x$. \\

In dimension $d_x = 1$, this restriction is vacuous, and Theorem \ref{thm: main theorem control classical grushin rectangle vertical} explicitly characterizes the minimal time of observability previously shown to exist in \cite{cannarsa2014null}: for any $\omega = (a,b) \times \Omega_y$, $0 < a < b \leq L$, the minimal time of observability from $\omega$ of system \eqref{stm : adjoint system classical} posed on $(0,L) \times \Omega_y$ is
\begin{align}\label{eqn: minimal time obs d=1}
    T((a,b) \times \Omega_y) = \frac{a^2}{4(1+\nu)}.
\end{align}

The Laplace-Beltrami operator thus corresponds to $\nu^2 = (\gamma d_y + 1)^2/4$, for which the minimal time of observability that we obtain when $\gamma = 1$, from \eqref{eqn: minimal time obs d=1}, is 
\begin{align}\label{eqn: minimal time laplace beltrami}
    T((a,b) \times \Omega_y) = \frac{a^2}{6 + 2d_y}.
\end{align}

Another direct corollary, from Theorem \ref{thm: main theorem control classical grushin rectangle vertical 2}, is that if $\gamma > 1$ and $\left(\{0_{\mathbb{R}^{d_x}}\} \times \Omega_y \right) \cap \overline{\omega} = \emptyset$, then $T(\omega) = + \infty$. We briefly discuss the case $\gamma \in (0,1)$ in Section \ref{section : sketch of proof carleman}.\\

\subsection{Setting and main results for the generalized operator}

Let us now present, when $d_x = d_y = 1$, a generalization of the operators presented in the preceding section. \\

Let $\Omega = (0,L) \times (0,\pi)$. Let $\gamma \in \mathbb{N}^*$, and let $q$ be a function satisfying
\begin{enumerate}[label=$\operatorname{H}_q$,ref=$\operatorname{H}_q$]
    \item \label{assumption: q} $q \in C^{\gamma}([0,L])$ and
    \begin{align*}
        \partial_x^k q(0) = 0 \text{ for every } k \in \{0, ..., \gamma -1\}, \partial_x^\gamma q(0) > 0 \text{, and } q(x) > 0 \text{ for every } x > 0.
    \end{align*}
\end{enumerate}

We study the observability properties of the following system. Let $T > 0$, we consider 
\begin{align}\label{stm : adjoint system generalized}
    \left\{ \begin{array}{lcll}
     \displaystyle \partial_t f -\partial_x^2 f - q(x)^2\partial_y^2 f + \frac{\nu^2 - 1/4}{x^2} f &= &0, & (t,x,y) \in (0,T) \times \Omega,\\[6pt]
     f(t,x,y) & = & 0, & (t,x,y) \in (0,T) \times \partial \Omega, \\[6pt]
     f(0,x,y) & = & f_0(x,y), & (x,y) \in \Omega,
    \end{array} \right.
\end{align}
where $f_0 \in L^2(\Omega)$, $q$ satisfies \ref{assumption: q} for some $\gamma \geq 1$, and $\nu > 0$ is a fixed parameter.\\

To state our result, let us introduce the Agmon distance for a general function $q$ assumed to be continuous. For any $E \geq 0$, and any dimension $d_x \geq 1$, the Agmon distance is defined by
\begin{align}\label{eqn: definition Agmon distance}
    d_{\operatorname{agm},E} : (x,x') \in \mathbb{R}^{2d_x}  \mapsto \underset{c \in C^{1,\operatorname{pw}}([0,1];x,x')}{\inf} \int_0^1 \left( q(c(t))^2 - E \right)_+^{1/2} |c'(t)| \ dt,
\end{align}
where the $\inf$ is taken on the set of piecewise $C^1$ paths joining $x$ to $x'$.\\

When taking $E=0$, and $q$ satisfying $q(0) = 0$ and $q(x) \neq 0$ for every $x \neq 0$, we shall simply write the Agmon distance from $x$ to $0$ by $d_{\operatorname{agm}}(x)$. That is, by \eqref{eqn: definition Agmon distance}, if $d_x = 1$, $x \in [0,L]$ and $q$ satisfies \ref{assumption: q}, we have 
\begin{align*}
    d_{\operatorname{agm}}(x) = \int_0^{x} q(s) \ ds. 
\end{align*}

Observe that, letting $\delta > 0$ be such that the set $F_\delta = \{x \in \Omega_x, q(x)^2 \leq \delta\}$ is a connected neighborhood of $0$, which is possible by assumption, for any fixed $x \in \Omega_x$, we have
\begin{align*}
    \lim_{\delta \rightarrow 0^+} d_{\operatorname{agm},\delta} (x,F_\delta) = d_{\operatorname{agm}}(x).
\end{align*}

\begin{theorem}\label{thm: main theorem control generalized grushin rectangle vertical}
    Let $\nu > 0$. Assume that $q$ satisfies \ref{assumption: q} with $\gamma = 1$. For any $0 < a < b \leq L$, the minimal time of observability of system \eqref{stm : adjoint system generalized} from $\omega = (a,b) \times (0,\pi)$ satisfies 
    \begin{align}
        T(\omega) \geq \frac{d_{\operatorname{agm}}(a)}{2q'(0)(1+\nu)}.
    \end{align}   
\end{theorem}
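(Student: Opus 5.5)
Since $\Omega_y=(0,\pi)$, the plan is to expand in Fourier modes $\sin(ny)$. We then build, for large $n$, explicit solutions $f_n(t,x,y)=e^{-\lambda_n t}v_n(x)\sin(ny)$ for which the ratio in the observability inequality \eqref{eqn: observability inequality Omega} blows up whenever $T<d_{\operatorname{agm}}(a)/(2q'(0)(1+\nu))$.

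Here $v_n$ is the first eigenfunction of the one-dimensional operator
\begin{align*}
A_n=-\partial_x^2+n^2q(x)^2+\frac{\nu^2-1/4}{x^2}
\end{align*}
on $(0,L)$ with Dirichlet conditions. Since $q(x)^2\approx q'(0)^2x^2$ near $0$, the rescaling $x=s/\sqrt{n q'(0)}$ turns $A_n$ into $nq'(0)$ times the singular harmonic oscillator $-\partial_s^2+s^2+(\nu^2-1/4)/s^2$ on the half line. Its ground state is $s^{\nu+1/2}e^{-s^2/2}$ with eigenvalue $2(1+\nu)$. The first step is therefore to prove
\begin{align*}
\lambda_n=2(1+\nu)q'(0)\,n+o(n).
\end{align*}
For the upper bound on $\lambda_n$, we use the cut-off, rescaled ground state as a test function. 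For the lower bound, we use an IMS localization: near $0$, compare with the oscillator using $q(x)^2\ge (q'(0)-\epsilon)^2x^2$ for $x$ small; away from $0$, the potential is at least $cn^2$. The result is $\lambda_n\le 2(1+\nu)q'(0)n(1+o(1))$, with a matching lower bound.

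The second step is an Agmon-type upper bound on $v_n$ in the observation region, of the form
\begin{align*}
\|v_n\|_{L^2(a,b)}\le C_\epsilon\, e^{-(1-\epsilon)n\,d_{\operatorname{agm}}(a)}\|v_n\|_{L^2(0,L)}.
\end{align*}
We set $\phi(x)=(1-\epsilon)n\int_0^x q$, multiply the eigenvalue equation by $e^{2\phi}v_n$ times a cutoff, and integrate by parts. This gives the Agmon identity
\begin{align*}
\int|(e^{\phi}v_n)'|^2+\int\bigl(n^2q^2+(\nu^2-1/4)x^{-2}-\phi'^2-\lambda_n\bigr)e^{2\phi}|v_n|^2=\text{boundary/cutoff terms}.
\end{align*}
On $\{x\ge\delta\}$ we have $n^2q^2-\phi'^2-\lambda_n\ge c_\epsilon n^2q^2-Cn>0$ for $n$ large. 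The region $x<\delta$ is controlled because $\phi\le n\delta^2C$ there, so it only costs a factor $e^{Cn\delta^2}$; that loss is absorbed by letting $\delta\to0$ after $\epsilon$. The singular term $(\nu^2-1/4)x^{-2}$ is handled with the Hardy inequality on the region near $0$ when $\nu<1/2$. This yields the bound above, up to a factor $e^{o(n)}$.

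The third step concludes. Take $f_0=v_n\sin(ny)$. Then
\begin{align*}
\|f(T)\|^2=e^{-2\lambda_nT}\|v_n\|^2\cdot\tfrac{\pi}{2},
\end{align*}
while the right-hand side of \eqref{eqn: observability inequality Omega} is at most $T\cdot\tfrac{\pi}{2}\|v_n\|_{L^2(a,b)}^2$. Observability would therefore force
\begin{align*}
e^{-4(1+\nu)q'(0)nT(1+o(1))}\le C\,e^{-2(1-\epsilon)n\,d_{\operatorname{agm}}(a)+o(n)}.
\end{align*}
Letting $n\to\infty$ gives $T\ge (1-\epsilon)d_{\operatorname{agm}}(a)/(2q'(0)(1+\nu))$ for every $\epsilon>0$, which is the claimed bound.

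The main obstacle is the second step. Near the origin the degeneracy of $q$ and the inverse-square singularity interact, so the Agmon weight must be cut off on a shrinking neighborhood without losing more than $e^{o(n)}$. One must also handle sign issues of the potential when $\nu<1/2$. I would first try a weight of the form $\phi=(1-\epsilon)n\bigl(\int_0^x q-\int_0^{\delta}q\bigr)_+$. With this weight, the region $x<\delta$ contributes only $O(\|v_n\|^2)$, and a Hardy-type estimate for the operator $A_n$ absorbs the singular term.
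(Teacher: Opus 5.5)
Your proof is correct. The Agmon step and the final testing argument are essentially the paper's. Your weight $(1-\epsilon)n\int_0^x q$ plays the role of $n(1-\delta)\,d_{\operatorname{agm},\delta}(\cdot,F_\delta)$ in Proposition~\ref{prop: agmon decay grushin general}. The loss $e^{Cn\delta^2}$ from $\{x<\delta\}$ is not $e^{o(n)}$ for fixed $\delta$, but it is harmless once you send $\delta\to0$ after $n\to\infty$.

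The eigenvalue step, however, is genuinely different. The paper never works with the ground state of $\mathsf{G}_\xi$. It takes the explicit Kummer-function first eigenfunction of the model operator $G_{q'(0)\xi}$ on $(0,L)$ and controls it pointwise through Lemma~\ref{lemma: monotony kummer function} and Proposition~\ref{prop: localization eigenvalues classical gamma = 1 d=1}. It then uses it as a quasimode in Lemma~\ref{lemma: spectral distance}, obtaining an eigenvalue $\Lambda_\xi$ of unspecified index with $|\Lambda_\xi-2q'(0)\xi(1+\nu)|\le C\sqrt{\xi}$. You instead apply min--max to the first eigenvalue, with the cut-off rescaled half-line ground state as test function. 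The contradiction only uses the upper bound $\lambda_n\le 2(1+\nu)q'(0)n(1+o(1))$, which also gives $\lambda_n=o(n^2)$ for the Agmon estimate, so your IMS lower bound can simply be dropped.

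What your route buys:
\begin{itemize}
\item It is more elementary: no special functions and no input from the companion paper on Kummer zeros.
\item It only needs $q(x)=q'(0)x+o(x)$, i.e.\ exactly the $C^1$ regularity in \ref{assumption: q} with $\gamma=1$. The paper's remainder bound $R(x)=O(x^3)$ tacitly uses a bit more; with only $R=o(x^2)$ its quasimode error becomes $o(\xi)$, which still suffices.
\item The same variational argument, using $q^2\le Cx^{2\gamma}$ on $[0,L]$ and Proposition~\ref{prop: eigenvalue classical gamma geq 1}, would also give $\Lambda_{\xi,0}=O(\xi^{2/(\gamma+1)})$ for $\gamma>1$.
\end{itemize}
The paper's route, in exchange, gives a two-sided quantitative localization of an eigenvalue near $2q'(0)\xi(1+\nu)$, which is more than this theorem needs.
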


Observe that if $q(x)= x$, then 
\begin{align*}
    \frac{d_{\operatorname{agm}}(a)}{2q'(0)(1+\nu)} = \frac{a^2}{4(1+\nu)},
\end{align*}
and we recover the minimal time of Theorem \ref{thm: main theorem control classical grushin rectangle vertical} when $d_x = 1$. The minimal time in Theorem \ref{thm: main theorem control classical grushin rectangle vertical} is in fact related to the Agmon distance associated to the radial function $x \mapsto |x|^2$. Indeed, in the high-dimensional setting,
\begin{align*}
   d_{\operatorname{agm}}(x)  = \int_0^1 s|x|^2 \ ds = \frac{|x|^2}{2}. 
\end{align*}

\subsection{Comments on the results and the literature}\label{section: comments results and literature}

By a duality argument (see \textit{e.g.} \cite[Proposition 2.48]{coron2007control}), it is well-known that observability from $\omega$ in time $T>0$ is equivalent to the null-controllability in time $T>0$ from $\omega$ of the same system with a control supported on $\omega$ as a source term. Therefore, our results can be translated in terms of null-controllability properties.\\

In the case $\nu^2 = \mathsf{H}$, the singular term of any of our systems vanishes, and we recover in systems \eqref{stm : adjoint system classical} and \eqref{stm : adjoint system generalized} the non-singular Grushin equation, with a boundary degeneracy if $d_x = 1$, and interior degeneracy if $d_x \geq 3$. In particular, in Theorem \ref{thm: main theorem control classical grushin rectangle vertical}, if $d_x = 1$ we recover an analogous result to \cite[Theorem 1.3]{beauchard2020minimal}, and if $d_x \geq 3$, to \cite[Theorem 1.1]{beauchard2020minimal} (they consider boundary observability in these theorems). Indeed, the minimal time of Theorem \ref{thm: main theorem control classical grushin rectangle vertical} is given, if $\nu^2 = \mathsf{H}$ and $\omega_x = \{ r < |x| < R\}$, by
\begin{align*}
    T(\omega_x \times \Omega_y) = 
    \left\{
    \begin{array}{cc}
        r^2/6 & \text{if } d_x = 1,  \\[6pt]
         r^2/2d_x& \text{if } d_x \geq 3. 
    \end{array}
    \right.
\end{align*}

The study of observability properties of the singular Grushin equation has not attracted as much attention in the literature as the non-singular case. Approximate controllability has been obtained in \cite{morancey2015approximate} when $d_x = d_y = 1$, with an interior singularity. Null-controllability has been studied when $d_x = d_y = 1$ in \cite{cannarsa2014null}, and when $d_x\geq 3$ under assumption \ref{assumption: d > 3} in \cite{anh2016null}. In \cite{cannarsa2014null,anh2016null}, they prove the existence of a minimal time, but they do not characterize it. Thus, Theorem \ref{thm: main theorem control classical grushin rectangle vertical} aims to complete their results, and succeeds to for $d_x = 1$ and in some particular settings for $d_x \geq 3$.\\

When $d_x = \gamma = 1$, the Laplace-Beltrami operator corresponds to $\nu^2  = (d_y + 1)^2/4$, for which we recall that the result we obtain in Theorem \ref{thm: main theorem control classical grushin rectangle vertical} is 
\begin{align*}
    T((a,b) \times \Omega_y) = \frac{a^2}{6+2d_y}.
\end{align*}
It is interesting to observe that the minimal time of observability for the Laplace–Beltrami operator depends on the dimension of $\Omega_y$, as for the singular equation in general it depends on the strength of the singular term. Indeed, both the non-singular ($\nu^2=\mathsf{H}$) and singular operators have the same Weyl law on the manifold $\Omega$, but exhibit different times of observability. Moreover, in the observation set, the exponential decay of a subsequence of eigenfunctions of these operators (roughly $\sim e^{-n\operatorname{dist(\omega,0)}^2}$, see Section \ref{section : agmon estimates}), as well as their localization from below (by applying Proposition \ref{prop: appendix cost of small time observability} to the eigenfunctions; see \textit{e.g.} \cite{LaurentLeauthaud23} for sharp estimates in the non-singular case), do not depend on the strength of the singularity. Hence, it seems that there is an additional abstract factor in play to explain this difference. It is for us and for now, simply due to influence of the singular potential on the speed of dissipation of the solutions of the Fourier components of our systems (see Section \ref{section: fourier decomposition} and Section \ref{section : sketch of proof carleman}). This observation allows us to conjecture a dependence of the minimal time of observability for the heat equation on almost-Riemannian manifolds, on the choice of a measure. We shall extend this remark in Section \ref{section: additional comments}.\\

As already said, when the singular term vanishes, we recover the classical non-singular Grushin equation. It has now been extensively studied in the two-dimensional setting $\Omega = (-L_-,L_+) \times \Omega_y$, where $\Omega_y$ is the one-dimensional torus or a finite interval. The pioneer work on the subject is \cite{beauchard2014null}, where the control is a vertical strip. It is shown that the minimal time depends on the strength of the degeneracy. If $\gamma < 1$, null-controllability occurs in any time, if $\gamma > 1$ it is never null-controllable, while in the critical case $\gamma = 1$, a minimal amount of time is required. This analysis is extended in higher dimensions in \cite{beauchard2014inverse}. The minimal time in the two dimensional setting is obtained, when the domain and the control zone are symmetric, and the control zone is a union of vertical strips, in \cite{beauchard20152d} using the transmutation method, and in \cite{allonsius2021analysis} by the moments method. Then, for the generalized equation, when the control is any vertical strip at the boundary, the minimal time is obtained in \cite{beauchard2020minimal}. Additionally to these results, when the control is the complementary of an horizontal strip and $\gamma = 1$, it has been obtained for the classical equation on the plane that it is never observable in \cite{LISSYprolate2025}, and in the compact setting for both the classical and generalized operators, in the series of papers \cite{duprez2020control,darde2023null,CRMATH_2017__355_12_1215_0}. In \cite{darde2023null}, a broader range of control zones is investigated. This shows that a geometric condition is necessary. Finally, in \cite{tamekue2022null}, the equation is studied on the Grushin sphere, and in \cite{vanlaere2025grushinlike} on two-dimensional manifolds via a reduction argument. \\

Concerning other null-controllability results for a class of degenerate parabolic equations, we can also cite \cite{beauchard2017heat}, that studies the heat equation on the Heisenberg group, or results concerning the Kolmogorov equation \cite{LEROUSSEAU20163193, beauchardKolmog2015, koenigfractionalheat, beauchardkolmogorov, DardeRoyerKolmogorov}. Concerning the heat equation in presence of a singular potential in the interior, we refer to \cite{vancostenoble2008null,ervedoza2008control}, and to \cite{cazacu2014controllability} when the singular potential is localized at the boundary.

\subsection{Structure of the paper}

The present work is organized as follows. We begin with two preliminary sections: 
\begin{enumerate}[label = \scalebox{0.5}{$\bullet$}]
    \item Section \ref{section: Well-posedness and Fourier decomposition} contains some preliminary remarks on our systems. Namely, we discuss the well-posedness of our systems in Section \ref{section: well-posedness}, and their Fourier decompositions in Section \ref{section: fourier decomposition}. The Fourier decompositions presented in Section \ref{section: fourier decomposition} are of importance for the proofs of our theorems. 
    \item We sketch the proofs of our results in Section \ref{section: sketch of proofs}. 
\end{enumerate}

Next, we have the sections containing the technicalities regarding the analysis of the Fourier components of our systems \eqref{stm : adjoint system classical} and \eqref{stm : adjoint system generalized}, as well as the proofs of our main results: 

\begin{enumerate}[label = \scalebox{0.5}{$\bullet$}]
    \item Section \ref{section: boundary observability via carleman estimates} presents some boundary Carleman estimates and the inferred costs of small-time boundary observability. The Carleman estimates are proved in Section \ref{section: carleman estimate classical}. Since the Carleman weights are inspired by the heat kernels on the half-line for the Fourier components of our operators, these kernels are computed in Section \ref{section: heat kernel computation}. We then derive from these the cost of small-time boundary observability in Section \ref{section: boundary observability}.
    \item Section \ref{section: spectral analysis} is devoted to the spectral analysis of the Fourier components. We establish general Agmon estimates in Section \ref{section : agmon estimates}. The spectrum of the classical operators is then analyzed. When $d_x = 1$, it is done in Section \ref{section: spectral analysis classical gamma = 1} for $\gamma = 1$, and in Section \ref{section: spectral analysis classical gamma > 1} for $\gamma > 1$. In dimension $d_x \geq 3$, and in any case for $\gamma \geq 1$, the analysis is performed in Section \ref{section: spectral analysis classical d > 3}. The generalized operators are studied in Section \ref{section: spectral analysis generalized}.
    \item Section  \ref{section: proofs} gathers the proofs of the theorems.
    The upper bound in Theorem \ref{thm: main theorem control classical grushin rectangle vertical} is proved in Section \ref{section: proof positive result}. The other results, concerning non-observability, are proved in Section \ref{section: proofs negative results}.
\end{enumerate}

Finally, Section \ref{section: additional comments} contains further remark complementing those presented of Section \ref{section: comments results and literature}, in particular regarding extensions to manifolds. We also list some questions that remain to be answered.\\

Appendix \ref{section: appendix The cost of small time observability from interior subsets} provides an alternative proof of Theorem \ref{thm: main theorem control classical grushin rectangle vertical} when $d_x=1$, and is also motivated by the discussions of Section \ref{section: sketch of proofs}.

\section{Well-posedness and Fourier decomposition}\label{section: Well-posedness and Fourier decomposition}

Since the treatments of systems \eqref{stm : adjoint system classical} and \eqref{stm : adjoint system generalized} are similar, we discuss in this section the following system, on $\Omega = \Omega_x \times \Omega_y$, for a sufficiently regular function $q$,
\begin{align} \label{stm : general system}
\left\{ \begin{array}{lcll}
     \partial_t f - \Delta_x f - q(x)^2\Delta_y f + \displaystyle\frac{\nu^2-\mathsf{H}}{|x|^2}f & = & 0, & (t,x,y) \in (0,T) \times \Omega,\\[6pt]
     f(t,x,y) & = & 0, & (t,x,y) \in (0,T) \times \partial \Omega, \\[6pt]
     f(0,x,y) & = & f_0(x,y), & (x,y) \in \Omega,
    \end{array} \right.
\end{align}
encompassing them both. Unless specified otherwise in the below discussions, $0_{\mathbb{R}^{d_x}}$ may belong to either the interior or the boundary of $\Omega_x$, as long as the corresponding Hardy constant $\mathsf{H}$ defined in \eqref{eqn: definition hardy constant} is positive, \textit{i.e.}, if $d_x = 2$, we impose $0 \in \partial \Omega_x$.

\subsection{Well-posedness} \label{section: well-posedness}

The well-posedness of system \eqref{stm : general system} is already treated for $q(x) = x$ and $d_x = d_y = 1$ in \cite[Section 2]{beauchard2014null} and \cite[Section 2]{cannarsa2014null}, and in a higher dimensional setting in \cite{anh2016null}. We recall it for the sake of self-completeness.  \\

Denote by $V := \overline{C_0^\infty(\Omega)}^{|\cdot|_V}$ the completion of $C_0^\infty(\Omega)$ under the norm $|\cdot|_V$ induced by the scalar product (thanks to Hardy's inequalities \eqref{eqn: hardy inequality})
\begin{align}
    (u,v)_V = \int_\Omega \nabla_x u \cdot \nabla_x v + q(x)^2\nabla_y u \cdot \nabla_y v + \frac{\nu^2 - \mathsf{H}}{|x|^2}uv \ dx \ dy, \quad \text{for every } u,v \in C_0^\infty(\Omega).
\end{align}

Denote by $W := \overline{C_0^\infty(\Omega)}^{|\cdot|_W}$ the completion of $C_0^\infty(\Omega)$ under the norm $|\cdot|_W$ induced by the scalar product
\begin{align}
    (u,v)_W = \int_\Omega \nabla_x u \cdot \nabla_x v + q(x)^2\nabla_y u \cdot \nabla_y v \ dx \ dy, \quad \text{for every } u,v \in C_0^\infty(\Omega).
\end{align}

We observe that thanks to Hardy inequalities \eqref{eqn: hardy inequality}, since $\nu > 0$, we have $H_0^1(\Omega) \subset V \subset W$, and so $V$ is dense in $L^2(\Omega)$. \\

Set $L_q^2(\Omega) := L^2(\Omega, q(x)dxdy)$. Following \cite[Lemma 1]{beauchard2014null}, every $f \in W$ admits weak derivatives $\partial_{x_i} f \in L^2(\Omega)$ and $\partial_{y_i} f \in L_q^2(\Omega)$. We consider the Friedrich extension of $G$ with minimal domain $C_0^\infty(\Omega)$, 
\begin{align}
    \begin{array}{lll}
        D(G_F) &=& \{f \in V, \ \exists c > 0 \text{ such that } |(f,g)_V| \leq c \|g\|_{L^2(\Omega)}, \text{ for every } g \in V\}, \\[6pt]
         (G_F f,g)_{L^2(\Omega)} &=& (f,g)_V. 
    \end{array}
\end{align}

Notice that this implies that $G_F = -\Delta_x^2 - q(x)^2\Delta_y^2 + (\nu^2 - \mathsf{H})/|x|^2$, so that we shall keep the notation $G$ for $G_F$. The operator $(G,D(G))$ is self-adjoint and positive on $L^2(\Omega)$. \\

The operator $(G,D(G))$ generates an analytic semigroup of contraction $(e^{-tG})_{t \geq 0}$ (see \textit{e.g.} \cite{zabczyk2020mathematical}). We define the weak solution of system \eqref{stm : general system} with a source term $h \in L^2((0,T) \times \Omega)$.

\begin{definition}
    A function $f \in C([0,T], L^2(\Omega)) \cap L^2((0,T), V)$ is solution of system \eqref{stm : general system} with a source term $h \in L^2((0,T) \times \Omega)$, if for every $g \in D(G)$, the mapping $t \in [0,T] \mapsto (f(t),g)_{L^2(\Omega)}$ is absolutely continuous, and for almost every $t \in [0,T]$,
    \begin{align}
        \frac{d}{dt}(f(t),g)_{L^2(\Omega)} = (f(t),Gg)_{L^2(\Omega)} + (h(t),g)_{L^2(\Omega)}.
    \end{align}
\end{definition}

From \cite{ball1977strongly}, this notion of solution is equivalent to the one given by the Duhamel formula 
\begin{align}\label{eqn: duhamel formula}
    f(t) = e^{-tG}f_0 + \int_0^t e^{-(t-s)G}h(s) \ ds, \quad t \in [0,T].
\end{align}

\begin{proposition}{\cite[Proposition 1]{cannarsa2014null}}
  For every $f_0 \in L^2(\Omega)$, $T > 0$, $h \in L^2(0,T;L^2(\Omega))$, there exists a unique weak solution of system \eqref{stm : adjoint system generalized} with a source term $h \in L^2((0,T) \times \Omega)$. Moreover, 
  \begin{align}
      \|f(t)\|_{L^2(\Omega)} \leq \|f_0\|_{L^2(\Omega)} + \sqrt{T}\|h\|_{L^2(0,T;L^2(\Omega))}, 
  \end{align}
  and $f(t) \in D(G)$, $f'(t) \in L^2(\Omega)$, for almost every $t \in (0,T)$.
\end{proposition}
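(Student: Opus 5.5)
The plan is to follow the standard semigroup route: build the solution from the Duhamel formula \eqref{eqn: duhamel formula}, using only that $(G,D(G))$ is self-adjoint and nonnegative on $L^2(\Omega)$. That property comes from the Friedrichs construction above together with Hardy's inequality \eqref{eqn: hardy inequality} and $\nu>0$.

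\textbf{Existence and uniqueness.} Since $G$ is self-adjoint and nonnegative, spectral calculus makes $e^{-tG}$ a contraction semigroup, analytic on $L^2(\Omega)$. For $h\in L^2(0,T;L^2(\Omega))$ the Duhamel integral is a well-defined continuous $L^2(\Omega)$-valued function of $t$, so it gives a mild solution $f\in C([0,T];L^2(\Omega))$. By Ball's theorem \cite{ball1977strongly}, mild solutions coincide with weak solutions in the sense of the Definition. Uniqueness then follows: the difference of two weak solutions is a weak solution with $f_0=0$ and $h=0$, hence equals $e^{-tG}0=0$.

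\textbf{The estimate.} Use contractivity and Cauchy--Schwarz:
\begin{align*}
\|f(t)\|_{L^2(\Omega)}\le \|f_0\|_{L^2(\Omega)}+\int_0^t\|h(s)\|_{L^2(\Omega)}\,ds\le \|f_0\|_{L^2(\Omega)}+\sqrt{T}\,\|h\|_{L^2(0,T;L^2(\Omega))}.
\end{align*}

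\textbf{The regularity $f\in L^2(0,T;V)$.} First take $f_0\in D(G)$ and $h$ smooth in time, so that $f$ is a strong solution. Pairing the equation with $f$ gives the energy identity
\[
\tfrac12\tfrac{d}{dt}\|f\|^2+|f|_V^2=(h,f).
\]
Integrating in time yields $\int_0^T|f|_V^2\le C\bigl(\|f_0\|^2+\|h\|^2_{L^2L^2}\bigr)$. Density of $D(G)$ in $L^2(\Omega)$ and of smooth $h$ in $L^2(0,T;L^2(\Omega))$ then extends this bound to all data.

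\textbf{Pointwise regularity $f(t)\in D(G)$, $f'(t)\in L^2(\Omega)$.} This is the step I expect to be the main obstacle. For the homogeneous part, analyticity gives $e^{-tG}f_0\in D(G)$ for every $t>0$. For the Duhamel term with only $L^2$-in-time forcing, I would use maximal $L^2$-regularity of self-adjoint nonnegative operators on Hilbert spaces. Diagonalizing $G$ through the spectral theorem and applying Plancherel in time shows
\[
\int_0^t e^{-(t-s)G}h(s)\,ds\in H^1(0,T;L^2(\Omega))\cap L^2(0,T;D(G)).
\]
Hence $f(t)\in D(G)$ and $f'(t)\in L^2(\Omega)$ for almost every $t$, with the exceptional set coming from the merely $L^2$-in-time regularity of $h$. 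If this route fails, the fallback is a second energy estimate, pairing the equation with $\partial_t f$ (equivalently with $Gf$) on smooth approximations and passing to the limit.
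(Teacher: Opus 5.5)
Your proposal is correct and follows the same semigroup route the paper relies on. $G$ is the self-adjoint nonnegative Friedrichs extension, $e^{-tG}$ is an analytic contraction semigroup, and the Duhamel formula is identified with the weak solution via \cite{ball1977strongly}; the paper defers the remaining details to \cite{cannarsa2014null}, and you supply them yourself (the energy bound giving $L^2(0,T;V)$, and maximal $L^2$-regularity for the a.e.\ statements). One point to keep in mind: Ball's weak formulation for $\partial_t f + Gf = h$ reads $\frac{d}{dt}(f(t),g)_{L^2(\Omega)} = -(f(t),Gg)_{L^2(\Omega)} + (h(t),g)_{L^2(\Omega)}$, so the sign in the paper's Definition must be read this way for your appeal to Ball's theorem to apply.
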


\subsection{Fourier decomposition}\label{section: fourier decomposition}

For any of our results, we follow the usual strategy (first introduced in \cite{beauchard2014null}) of Fourier decomposition of system \eqref{stm : general system}. The first step is to take advantage of the tensorized structure of our systems, the domain, and the observation set $\omega$ that we decompose as $\omega = \omega_x \times \Omega_y$. Denoting by $\phi_n$ the eigenfunctions of $-\Delta_y$, $\phi_n$ being associated to $\xi_n^2$, $n \in \mathbb{N}^*$, we can write any solution of system \eqref{stm : adjoint system classical} as 
\begin{align}\label{eqn: fourier decomposition}
    f(t,x,y) = \displaystyle \sum_n f_n(t,x) \phi_n(y), \quad \text{with} \quad f_n(t,x) = \displaystyle \int_{\Omega_y} f(t,x,y) \phi_n(y) \ dy.
\end{align}
For every $n \geq 1$, the Fourier coefficients $f_n \in C([0,T], L^2(\Omega_x))\cap L^2(0,T; H_0^1(\Omega_x))$ are the unique weak solutions of (see \cite[Proposition 2]{cannarsa2014null} or \cite[Proposition 2.1]{anh2016null})
\begin{align}\label{stm : adjoint system generalized fourier}
    \left\{ \begin{array}{lcll}
     \partial_t f_n -\Delta_x f_n + \xi_n^2q(x)^2 f_n + \displaystyle\frac{\nu^2 - \mathsf{H}}{|x|^2} f_n & = & 0, & (t,x) \in (0,T) \times \Omega_x,\\[6pt]
     f_n(t,x) & = & 0, & t \in (0,T), \ x \in \partial\Omega_x, \\[6pt]
     f_n(0,x) & = & f_{n,0}(x), & x \in \Omega_x.
    \end{array} \right.
\end{align}

We denote by $G_\xi$ the operator 
\begin{align}\label{eqn: definition G_xi}
    \begin{array}{cll}
        D(G_\xi) &=&  \{f \in H_0^1(\Omega_x), \ G_\xi f \in L^2(\Omega_x)\}, \\[6pt]
        G_\xi &=& -\Delta_x + \xi^2q(x)^2 + \displaystyle\frac{\nu^2 - \mathsf{H}}{|x|^2}.
    \end{array}
\end{align}
The domain of $G_\xi$ will be justified again in Section \ref{section: spectral analysis}, in which we will recall \eqref{eqn: definition G_xi}. The operator $G_\xi$ is self-adjoint with compact resolvent (since its form domain is $H_0^1(\Omega_x)$ by Hardy inequalities \eqref{eqn: hardy inequality}). For any $\xi > 0$, we denote its eigenvalues by $\lambda_{\xi,k}$, $k \geq 0$, and the associated eigenfunctions by $u_{\xi,k}$. The eigenfunctions $u_{\xi,k}$ form an Hilbert basis of $L^2(\Omega_x)$.  We are therefore looking at an infinite family of one-dimensional systems. We shall write $G_n, \lambda_{n,k}$ and $u_{n,k}$ for $G_{\xi_n}$.\\

Recall that by Bessel-Parseval equality we have, for any $\omega_x \subset \Omega_x$, for any $f$ solution of system \eqref{stm : general system}, and decomposed as in \eqref{eqn: fourier decomposition},
\begin{align}
    \int_{\omega_x} \int_{\Omega_y} |f(t,x,y)|^2 \ dx \ dy = \sum_{n=1}^\infty \int_{\omega_x} |f_n(t,x)|^2 \ dx, \quad \text{for almost every } t \in (0,T).
\end{align}
Thus, the following holds.\\

System \eqref{stm : general system} is observable in time $T > 0$ from $\omega = \omega_x \times \Omega_y$ if and only if the Fourier systems \eqref{stm : adjoint system generalized fourier} are uniformly observable with respect to $n \geq 1$ in time $T > 0$ from $\omega_x$, that is, if and only if there exists $C > 0$ such that for every $n \geq 1$, for every $f_{0,n} \in L^2(0,L)$, the solution $f_n$ of system \eqref{stm : adjoint system generalized fourier} satisfies  
\begin{align}\label{eqn: observability inequality uniform fourier}
\int_{\Omega_x} |f_n(T,x)|^2 \ dx \leq C \int_0^T \int_{\omega_x} |f_n(t,x)|^2 \ dx \ dt.
\end{align}

\section{Sketch of proofs}\label{section: sketch of proofs}

Let us sketch here the proofs of our results, based on the previous Fourier decompositions. We start with the upper bound on the minimal time $T(\omega)$ defined in \eqref{eqn: def T(omega)}, and then discuss how to obtain lower bounds. 

\subsection{Upper bound on the minimal time of null-controllability: Carleman estimates}\label{section : sketch of proof carleman}

In this subsection, we discuss the proof of the upper bound in Theorem \ref{thm: main theorem control classical grushin rectangle vertical}. We thus focus on system \eqref{stm : adjoint system classical}, and its Fourier components given by systems \eqref{stm : adjoint system generalized fourier} with $q(x) = |x|$. \\

Consider an open subset $\omega_x$ satisfying the geometric assumption of Theorem \ref{thm: main theorem control classical grushin rectangle vertical} for some open smooth subdomain $\mathcal{O}$. Our proof is decomposed in two steps: 
\begin{enumerate}[label = (\roman*), ref = (\roman*)]
    \item \label{step i} Setting $\mathcal{O}_\epsilon := \{ x \in \Omega_x, \operatorname{dist}(x,\mathcal{O}) < \epsilon \}$, with $\epsilon$ sufficiently small such that $\partial\mathcal{O}_\epsilon \subset \omega_x$, we prove uniform boundary observability in any time 
    \begin{align}\label{eqn: upper bound time boundary obs}
        T \geq T(\epsilon) := \frac{\sup \{ |x|^2, x \in \partial \mathcal{O}_\epsilon \}}{4(1+\nu)},
    \end{align}
    from $\partial\mathcal{O}_\epsilon$, for systems \eqref{stm : adjoint system generalized fourier} posed on $\mathcal{O}_\epsilon$, \textit{i.e.} there exists $C > 0$ such that for every $n \geq 1$, any solution $f_n$ of system \eqref{stm : adjoint system generalized fourier} posed on $\mathcal{O}_\epsilon$ satisfies 
    \begin{align}\label{eqn: boundary observability inequality uniform fourier}
    \int_{\Omega_x} |f_n(T,x)|^2 \ dx \leq C \int_0^T \int_{\partial\mathcal{O}_\epsilon} |\partial_{\nu_x}f_n(t,x)|^2 \ ds(x) \ dt,
    \end{align}
    where $\partial_{\nu_x}$ stands for the outward normal derivative to the boundary.
    \item \label{step ii} This implies that system \eqref{stm : adjoint system classical} posed on $\mathcal{O}_\epsilon \times \Omega_y$ is observable in any time $T \geq T(\epsilon)$ from $\partial \mathcal{O}_\epsilon \times \Omega_y$ by the discussion of Section \ref{section: fourier decomposition}. Equivalently, system 
    \begin{align}\label{eqn: boundary control system posed on O_epsilon}
    \left\{ \begin{array}{lcll}
     \partial_t f - \Delta_xf - |x|^{2\gamma}\Delta_y f + \displaystyle\frac{\nu^2-\mathsf{H}}{|x|^2}f & = & 0, & (t,x,y) \in (0,T) \times \mathcal{O}_\epsilon \times \Omega_y,\\[6pt]
     f(t,x,y) & = & \mathbf{1}_{\partial \mathcal{O}_\epsilon \times\Omega_y}u(t,x,y), & (t,x,y) \in (0,T) \times \partial (\mathcal{O}_\epsilon \times\Omega_y), \\[6pt]
     f(0,x,y) & = & f_0(x,y), & (x,y) \in \mathcal{O}_\epsilon \times \Omega_y,
    \end{array} \right.
    \end{align}
    is null-controllable in any time $T \geq T(\epsilon)$. Classical extension arguments by cutoffs in the elliptic region then infer that system 
    \begin{align}\label{eqn: internal control system from O_epsilon posed on Omega} 
    \left\{ \begin{array}{lcll}
     \partial_t f - \Delta_xf - |x|^{2\gamma}\Delta_y f + \displaystyle\frac{\nu^2-\mathsf{H}}{|x|^2}f & = & \mathbf{1}_{(\mathcal{O}_\epsilon \setminus \mathcal{O})\times \Omega_y} u(t,x,y), & (t,x,y) \in (0,T) \times \Omega,\\[6pt]
     f(t,x,y) & = & 0, & (t,x,y) \in (0,T) \times \partial \Omega, \\[6pt]
     f(0,x,y) & = & f_0(x,y), & (x,y) \in \Omega,
    \end{array} \right.
    \end{align}
    is null-controllable in any time $T \geq T(\epsilon)$, with controls supported on $(\mathcal{O}_\epsilon \setminus \mathcal{O})\times \Omega_y \subset \omega$. As it is equivalent to internal observability from $(\mathcal{O}_\epsilon \setminus \mathcal{O} ) \times \Omega_y$, this yields the upper bound of Theorem \ref{thm: main theorem control classical grushin rectangle vertical} since 
    \begin{align*}
        \mathcal{O}_\epsilon \setminus \mathcal{O} \subset \omega_x \Rightarrow T(\omega) \leq T((\mathcal{O}_\epsilon \setminus \mathcal{O}) \times \Omega_y),
    \end{align*}
    and we can take $\epsilon > 0$ arbitrary small in step \ref{step i}.
\end{enumerate}

Step \ref{step ii} above is classical so we do not detail it. Our work is to focus on step \ref{step i}. Since systems \eqref{stm : adjoint system generalized fourier} are already known to be observable in any time $T > 0$, both from interior subsets or from the boundary, we only have to study the uniformity of the cost of observability in \eqref{eqn: boundary observability inequality uniform fourier} for large $n$. This is done by combining a cost of small time boundary observability of systems \eqref{stm : adjoint system generalized fourier} with the dissipation speed of their solutions, as follows. \\

We first obtain that any solution $f_n$ of system \eqref{stm : adjoint system generalized fourier}, posed on $\mathcal{O}_\epsilon$, satisfies the decay rate  
\begin{align}\label{eqn: dissipation sketch of proof}
    \|f_n(T)\|_{L^2(\mathcal{O}_\epsilon)}^2 \leq e^{-4\xi_n(1+\nu)(T-T_0)} \|f_n(T_0)\|_{L^2(\mathcal{O}_\epsilon)}^2, \quad 0 \leq T_0 < T.
\end{align}
This dissipation speed is given by an estimation of the first eigenvalues of $G_n$, whose study is provided in Section \ref{section: spectral analysis}, and more precisely, in Propositions \ref{prop: localization eigenvalues classical gamma = 1 d=1} for $d_x = 1$, and \ref{prop: first eigenvalue in the ball} for $d_x \geq 3$.\\

Then, we prove the existence of a constant $d(\epsilon) > 0$ such that, for any $T_0 > 0$, there exists $n_0 \in \mathbb{N}$ such that for any $n \geq n_0$ we have 
\begin{align}\label{eqn: cost of boundary observability sketch of proof}
    \|f_n(T_0)\|_{L^2(\mathcal{O}_\epsilon)}^2 \leq C(T_0)e^{d(\epsilon)\xi_n} \int_0^T \int_{\partial\mathcal{O}_\epsilon} |\partial_{\nu_x}f_n(t,x)|^2 \ ds(x) \ dt. 
\end{align}
Combining \eqref{eqn: dissipation sketch of proof} and \eqref{eqn: cost of boundary observability sketch of proof} shows that uniform observability holds for any $n$ sufficiently large as long as 
\begin{align}
    T \geq \frac{d(\epsilon)}{4(1+\nu)} + T_0,
\end{align}
with $T_0$ arbitrary small. We therefore need to obtain sharp estimates on the cost of boundary observability $d(\epsilon)$ in the asymptotic $n \rightarrow + \infty$. To achieve this, we rely on Carleman estimates inspired from \cite{beauchard2020minimal}, and given in Proposition \ref{prop: boundary carleman classical}, from which we estimate $d(\epsilon)$ in Proposition \ref{prop: cost of boundary observability}.\\

We emphasize that for $d_x \geq 3$, the Carleman estimates given in \cite{ervedoza2008control} are not well-suited here. They give observability constants of the form $\exp(C(T)\xi_n^{4/3})$, while we are looking for something of the form $\exp(C(T)\xi_n)$. However, when considering system \eqref{stm : adjoint system classical} with $\gamma \in (0,1/2)$, they are sufficient to obtain observability in any time $T > 0$. For the case $\gamma \in (1,1/2)$, a refinement is needed. \\

The upper bound of Theorem \ref{thm: main theorem control classical grushin rectangle vertical} following steps \ref{step i} and \ref{step ii} is proved in Section \ref{section: proof positive result}.\\

We also propose in Appendix \ref{section: appendix The cost of small time observability from interior subsets}, Proposition \ref{prop: appendix cost of small time observability}, to estimate from above the cost of small time observability from interior subsets for the solutions of systems \eqref{stm : adjoint system generalized fourier} with $q(x) = |x|$, and in dimension $d_x = 1$. That is, to obtain an estimate of the form of \eqref{eqn: cost of observability sketch of proof} below, with $d$ well-estimated, which is similar to \eqref{eqn: cost of boundary observability sketch of proof} but for subsets distributed in the interior of $\Omega_x$. \\

Appendix \ref{section: appendix The cost of small time observability from interior subsets} thus provides an alternative proof of the upper bound in Theorem \ref{thm: main theorem control classical grushin rectangle vertical}, at least in the case $d_x = 1$, by following the same strategy as for the boundary observability case above, and from which one obtains the exact minimal time of observability \eqref{eqn: minimal time obs d=1} without ever relying on the equivalent control systems. \\

The computation are performed in dimension $d_x = 1$ for simplicity of the presentation, and because by decomposition in spherical harmonics in the setting of $\Omega_x = B(0,L)$ and $\omega_x = \{a < |x| < b \}$, the problem can be reduced anyway to the one-dimensional case (see \cite[Section 3.3]{vancostenoble2008null}). Since this is much harder and tedious to present when $\omega_x$ is not of the form $\omega_x = \{r < |x| < R\}$, we preferred to use in the main part of the paper the strategy of steps \ref{step i} and \ref{step ii} to prove the upper bound of Theorem \ref{thm: main theorem control classical grushin rectangle vertical}.\\

Moreover, Appendix \ref{section: appendix The cost of small time observability from interior subsets} is motivated by the discussion below. \\

We now discuss a strategy, used in the upcoming work \cite{DardeTrabut2025} for boundary observability, to relax when $\Omega_x = B(0,L)$ the geometric restriction on $\omega_x$ in Theorem \ref{thm: main theorem control classical grushin rectangle vertical}. In this setting, we assume $\omega_x = \{a \le |x| \le b\}$. Assume now that for any $n \ge n_0$ and any time $T_0 > 0$ (observe that $n_0$ is independent of $T_0$), we have a cost of small time observability 
\begin{align}\label{eqn: cost of observability sketch of proof}
    \|f_n(T_0)\|_{L^2(\Omega_x)}^2 \leq C(T_0)e^{d\xi_n} \int_0^{T_0} \int_{\omega_x} |f_n(t,x)|^2 \ dx \ dt. 
\end{align}
Using a decomposition of the solution in spherical harmonics (see \textit{e.g.} \cite{vancostenoble2008null}), the observation on $\omega_x$ becomes in the new spherical variables $(r,\theta) \in (0,L) \times \mathbb{S}^{d_x -1}$, an observation on $\{a \le r \le b\} \times \mathbb{S}^{d_x -1}$. Then, using Miller’s strategy \cite{miller2010direct}, which is a Lebeau–Robbiano type strategy from the dual viewpoint of observability, combined with spectral inequalities for spherical harmonics \cite{jerison1999nodal}, one can show from \eqref{eqn: cost of observability sketch of proof} that the cost of observing from $\{a \le r \le b\} \times \Gamma$, $\Gamma \subset \mathbb{S}^{d_x - 1}$, in any time $T_0 > 0$ has the same dependence on $\xi_n$ as in \eqref{eqn: cost of observability sketch of proof}. Then, following the proof of boundary observability given above, this will give an upper bound on the minimal time of observability from any open subset $\tilde{\omega}_x \subset B(0,L)$, by taking $b  = a + \epsilon$, and $\epsilon$ and $\Gamma$ arbitrary small so that $\{ a < |x| < a+ \epsilon\} \times \Gamma \subset \tilde{\omega}_x$. This upper bound will be precise as long as the cost obtained in \eqref{eqn: cost of observability sketch of proof} is precise.\\

This strategy is one of the reasons we conjectured that the lower bound in Theorem \ref{thm: main theorem control classical grushin rectangle vertical} is sharp for any open subset $\omega_x \subset \Omega_x$. From the control system viewpoint, this approach has been used for the Grushin equation in \cite{beauchard2014null,beauchard2014inverse}.\\

However, we stress again that to employ Miller's strategy \cite{miller2010direct}, \eqref{eqn: cost of observability sketch of proof} must hold for every $T_0 > 0$ and $n \geq n_0$, with $n_0$ independent of $T_0$. This independence of $n_0$ on $T_0$ is precisely what we miss. 

\subsection{Lower Bound on the minimal time of null-controllability: Agmon estimates}\label{section : sketch proof observability}

To obtain a lower bound on $T(\omega)$, that is prove the non-observability part of Theorem \ref{thm: main theorem control classical grushin rectangle vertical}, and Theorems \ref{thm: main theorem control classical grushin rectangle vertical 2} and \ref{thm: main theorem control generalized grushin rectangle vertical}, we  disprove the existence of a constant $C > 0$ such that the observability inequality \eqref{eqn: observability inequality uniform fourier} holds uniformly in $n \geq 1$. This is done in a classical manner by testing the inequality \eqref{eqn: observability inequality uniform fourier} against the solution associated to an eigenfunction of $G_n$, and taking $n \rightarrow + \infty$. We therefore need, at least for $n$ large, to extract a sequence of eigenvalues $(\lambda_{n,k_n})_n$ of $G_n$ introduced in \eqref{eqn: definition G_xi}, and obtain an upper bound on the norm of the eigenfunction $u_{n,k_n}$ in the observation set $\omega_x$, which is done by means of Agmon estimates.\\ 

The sequence of eigenvalues will be obtained in Section \ref{section: spectral analysis}. More precisely, for the classical operators ($q(x) = |x|^{\gamma})$, they will correspond to the first eigenvalues for each $n$ sufficiently large. We first perform the spectral analysis in one dimension in Section \ref{section: spectral analysis classical gamma = 1} when $\gamma = 1$, and Section \ref{section: spectral analysis classical gamma > 1} when $\gamma >1$. We then discuss in Section \ref{section: spectral analysis classical d > 3} how this extends in higher dimensions under assumption \ref{assumption: d > 3}. In Section \ref{section: spectral analysis generalized}, by a perturbative argument, we study the generalized operator. The Agmon estimate is given in Section \ref{section : agmon estimates}. \\

Since all the non-observability results are proved in a similar manner (only the knowledge of the sequence of eigenvalues changes), we prove them all in a single Section \ref{section: proofs negative results}.

\section{Boundary observability via Carleman estimates}\label{section: boundary observability via carleman estimates}

In this section, we prove some global Carleman estimates for our operators. These Carleman estimates follow from those obtained in \cite[Section 2]{beauchard2020minimal}. Prior to that, we propose a computation of the heat kernel of the operators $G_\xi$ on the whole half-line. This is not necessary for our proofs, but it may have interest on its own. Indeed, the Carleman estimates obtained in \cite{beauchard2020minimal} are inspired from the heat kernels of the Fourier components of the classical non-singular Grushin operators on the whole real line (see \cite[Remark 2.4]{beauchard2020minimal}). Our computations of the heat kernels of the singular operators show that both kernels are very much look-alike. This intuitively explains why we obtain similar results of observability for the singular equation as for the non-singular one. The main difference between the results comes from the effect of the singular potential on the speed of dissipation of the associated semigroup. But in terms of propagation of information, both dynamics behave roughly the same.   

\subsection{Heat kernel for the classical operator on the half-line}\label{section: heat kernel computation}

Let us first introduce our operator on the half-line. In this case, $\mathsf{H} = 1/4$, and we fix $\nu > 0$. For $\xi > 0$, we define
\begin{align}\label{eqn: definition grushin singulier classical on R^+}
    \begin{array}{crl}
    D(\mathcal{G}_\xi) &:=& \left\{ f \in H_0^1(0,+\infty), \quad \mathcal{G}_\xi f \in L^2(0,+\infty) \right\}, \\[8pt]
    \mathcal{G}_\xi  &=& -\partial_x^2 + \xi^2 x^2 + \displaystyle\frac{\nu^2 - 1/4}{x^2}.     
    \end{array}
\end{align}
This operator is self-adjoint with compact resolvent. Compactness of the resolvent follows from Hardy inequality, combined with \cite[Theorem 3.1]{berezin2012schrodinger}. Following \cite[Section 3.2]{romankummer2025} (where the operator is actually $\mathcal{G}_1$, but we can perform the change of variable $r = \sqrt{\xi}x$) we have the below result.  

\begin{proposition}\label{prop: eigv and eigenf of Gn on R+}
    The $L^2(0,+\infty)$-normalized eigenfunctions of $\mathcal{G}_\xi $ are  
    \begin{align}
        \Phi_{\xi,k}(x) = \sqrt{\frac{2\xi^{1+\nu}}{\Gamma(1+\nu)}}\sqrt{\frac{k!}{(1+\nu)_k}}e^{-\xi x^2/2}x^{1/2 + \nu}L_k^{(\nu)}(\xi x^2),
    \end{align}
    where $(a)_j = a(a+1)...(a+j-1)$ is the Pochammer symbol (or rising factorial), and $L_k^{(\nu)}$ are the generalized Laguerre polynomials. The associated eigenvalues are given by 
    \begin{align}
        \xi \mu_k = \xi \left(4k + 2(1+\nu) \right), \quad k \geq 0,
    \end{align}
    where we set $\mu_k = 4k + 2(1+ \nu)$, which are the eigenvalues of $\mathcal{G}_1$.
\end{proposition}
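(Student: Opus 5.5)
The plan is to reduce to $\xi = 1$ by scaling, and then to identify $\mathcal{G}_1$ with a Laguerre operator through an explicit substitution.

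\emph{Scaling.} Set $r = \sqrt{\xi}\,x$ and $U_\xi f(x) = \xi^{1/4} f(\sqrt{\xi}x)$, which is unitary on $L^2(0,+\infty)$. A direct computation shows $\mathcal{G}_\xi = \xi\, U_\xi \mathcal{G}_1 U_\xi^{-1}$, since each of the three terms $-\partial_x^2$, $\xi^2x^2$ and $(\nu^2-1/4)/x^2$ scales by a factor $\xi$. Hence the eigenvalues of $\mathcal{G}_\xi$ are $\xi\mu_k$, and its normalized eigenfunctions are $\xi^{1/4}\Phi_{1,k}(\sqrt{\xi}x)$. This accounts for the factor $\xi^{(1+\nu)/2}$ multiplying $x^{1/2+\nu}$, and for the $\xi$ dependence inside the Laguerre polynomial.

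\emph{Case $\xi=1$.} Make the ansatz $u(x) = x^{1/2+\nu}e^{-x^2/2}w(x^2)$ and set $s = x^2$. The exponent $1/2+\nu$ is the root of the indicial equation $\alpha(\alpha-1) = \nu^2-1/4$ that is compatible with $H_0^1$ near $0$; the other root is $1/2-\nu$. Plugging the ansatz into $-u''+x^2u+(\nu^2-1/4)x^{-2}u = \mu u$, the $x^{-2}$ and $x^2$ terms cancel. What remains is, up to a factor $4x^{1/2+\nu}e^{-s/2}$,
\[
s w'' + (\nu+1-s)w' + \tfrac{\mu - 2(1+\nu)}{4}\, w = 0 .
\]
This is the Laguerre equation. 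Its polynomial solutions occur exactly when $(\mu-2(1+\nu))/4 = k \in \mathbb{N}$, and are given by $w = L_k^{(\nu)}$, which yields $\mu_k = 4k+2(1+\nu)$.

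\emph{Normalization.} With $s = x^2$ and $dx = ds/(2\sqrt{s})$,
\[
\int_0^\infty x^{1+2\nu}e^{-x^2}L_k^{(\nu)}(x^2)^2\,dx = \tfrac12\int_0^\infty s^\nu e^{-s}L_k^{(\nu)}(s)^2\,ds = \tfrac{\Gamma(k+\nu+1)}{2\,k!}.
\]
Since $\Gamma(k+\nu+1) = \Gamma(1+\nu)(1+\nu)_k$, this gives exactly the constant $\sqrt{2k!/(\Gamma(1+\nu)(1+\nu)_k)}$.

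\emph{Completeness.} The remaining step, and the main obstacle, is to show that these functions exhaust the spectrum. For this I would invoke completeness of $\{s^{\nu/2}e^{-s/2}L_k^{(\nu)}(s)\}_k$ in $L^2(0,\infty;ds)$. The map $g(s)\mapsto \sqrt{2}\,x^{1/2}g(x^2)$ is unitary from $L^2(ds)$ onto $L^2(dx)$, and it sends this family to $\{\Phi_{1,k}\}_k$, so the $\Phi_{1,k}$ form an orthonormal basis.

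I also need each $\Phi_{1,k}$ to lie in $D(\mathcal{G}_1)$. It behaves like $x^{1/2+\nu}$ at $0$ and decays like a Gaussian at infinity. Because $\nu>0$, it belongs to $H_0^1(0,+\infty)$, and $\mathcal{G}_1\Phi_{1,k} = \mu_k\Phi_{1,k}$ lies in $L^2$.

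Finally, $\mathcal{G}_1$ is self-adjoint, and a self-adjoint operator that admits an orthonormal basis of eigenvectors has no other eigenvalues: any further eigenvector would be orthogonal to the whole basis, hence zero. This concludes, following the argument in \cite[Section 3.2]{romankummer2025}.
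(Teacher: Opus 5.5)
Your proposal is correct and follows the paper's route. The paper gives no separate argument: it invokes \cite[Section 3.2]{romankummer2025} for $\mathcal{G}_1$ and rescales with $r=\sqrt{\xi}x$, and your Laguerre reduction, Laguerre-orthogonality normalization, and completeness-plus-self-adjointness argument simply make that cited $\xi=1$ case explicit.
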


From Proposition \ref{prop: eigv and eigenf of Gn on R+}, we are able to compute the heat kernel of $\mathcal{G}_\xi $. 

\begin{proposition}\label{prop: heat kernel}
    The Heat kernel associated to $\mathcal{G}_\xi $ is given by 
    \begin{align}
        k_\xi(t,x,x') = \frac{\xi \sqrt{xx'}}{\sinh(2\xi t)}I_\nu\left( \frac{\xi xx'}{\sinh(2\xi t)}\right)e^{-\frac{\xi}{2}(x^2+x'^2)\coth(2\xi t)},
    \end{align}
    where $I_\nu$ is the modified Bessel function of order $\nu$.
\end{proposition}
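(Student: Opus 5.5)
The plan is to write the kernel as an eigenfunction expansion and sum it with the Hille--Hardy formula. Since $\mathcal{G}_\xi$ is self-adjoint with compact resolvent, and by Proposition \ref{prop: eigv and eigenf of Gn on R+} its normalized eigenfunctions $\Phi_{\xi,k}$ form a Hilbert basis of $L^2(0,+\infty)$, the heat kernel is
\begin{align*}
k_\xi(t,x,x') = \sum_{k\geq 0} e^{-\xi\mu_k t}\,\Phi_{\xi,k}(x)\Phi_{\xi,k}(x').
\end{align*}
The series converges absolutely and locally uniformly for $t>0$. This follows from the bound $|L_k^{(\nu)}(s)|\leq \frac{(1+\nu)_k}{k!}e^{s/2}$, which controls each term by $C_{x,x'}\,e^{-4k\xi t}$ times a polynomial in $k$. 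This bound also justifies the termwise manipulations below.

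Next I insert the explicit formulas and factor out everything independent of $k$. Using $\frac{k!}{(1+\nu)_k}=\frac{k!\,\Gamma(1+\nu)}{\Gamma(k+\nu+1)}$, the kernel becomes
\begin{align*}
k_\xi(t,x,x')=2\xi^{1+\nu}(xx')^{1/2+\nu}e^{-\frac{\xi}{2}(x^2+x'^2)}e^{-2(1+\nu)\xi t}\sum_{k\geq0}\frac{k!}{\Gamma(k+\nu+1)}L_k^{(\nu)}(\xi x^2)L_k^{(\nu)}(\xi x'^2)\,z^k,
\end{align*}
with $z=e^{-4\xi t}\in(0,1)$. The key step is to apply the Hille--Hardy formula, which for $|z|<1$ reads
\begin{align*}
\sum_{k\geq0}\frac{k!}{\Gamma(k+\nu+1)}L_k^{(\nu)}(a)L_k^{(\nu)}(b)z^k=\frac{(abz)^{-\nu/2}}{1-z}\,e^{-\frac{(a+b)z}{1-z}}\,I_\nu\!\left(\frac{2\sqrt{abz}}{1-z}\right).
\end{align*}
I apply it with $a=\xi x^2$ and $b=\xi x'^2$.

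The rest is hyperbolic bookkeeping. Since $\sqrt{z}/(1-z)=1/(2\sinh(2\xi t))$, the Bessel argument is $\frac{2\xi xx'\sqrt z}{1-z}=\frac{\xi xx'}{\sinh(2\xi t)}$. The power factors give
\begin{align*}
(abz)^{-\nu/2}=\xi^{-\nu}(xx')^{-\nu}e^{2\nu\xi t},
\end{align*}
which cancels $\xi^{\nu}(xx')^{\nu}e^{-2\nu\xi t}$ and leaves $2\xi\sqrt{xx'}\,e^{-2\xi t}/(1-z)=\xi\sqrt{xx'}/\sinh(2\xi t)$. For the Gaussian terms, $\frac12+\frac{z}{1-z}=\frac{1+z}{2(1-z)}=\frac12\coth(2\xi t)$, so the exponentials combine into $e^{-\frac{\xi}{2}(x^2+x'^2)\coth(2\xi t)}$. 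This gives exactly the announced formula.

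The only genuinely nontrivial ingredient is the Hille--Hardy bilinear generating function. I would quote it from a standard reference (e.g.\ Watson, or Erdélyi's \emph{Higher Transcendental Functions}) rather than reprove it. The other point requiring care is justifying convergence of the eigenfunction expansion to the kernel of $e^{-t\mathcal{G}_\xi}$, which the Laguerre bound above provides. As a sanity check I would verify $\nu=1/2$. There $I_{1/2}(s)=\sqrt{2/(\pi s)}\sinh s$, and the formula reduces to the odd-reflected Mehler kernel of the harmonic oscillator $-\partial_x^2+\xi^2x^2$ with Dirichlet condition at $0$, which confirms the normalizations.
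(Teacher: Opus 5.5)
Your proposal is correct and follows the paper's proof essentially step for step: the eigenfunction (Mehler) expansion over the basis of Proposition \ref{prop: eigv and eigenf of Gn on R+}, the Hille--Hardy bilinear formula (the paper cites Bateman--Erd\'elyi) with $z=e^{-4\xi t}$ and arguments $\xi x^2,\xi x'^2$, and then the identities $\frac{e^{-2\xi t}}{1-e^{-4\xi t}}=\frac{1}{2\sinh(2\xi t)}$ and $\frac12+\frac{z}{1-z}=\frac12\coth(2\xi t)$. Your convergence justification via $|L_k^{(\nu)}(s)|\le \frac{(1+\nu)_k}{k!}e^{s/2}$ and the $\nu=1/2$ consistency check are sound extras that the paper does not include.
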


\begin{proof}
    Since the family $\{\Phi_{\xi,k}\}_k$ is complete and orthonormal in $L^2(0,+\infty)$, we use Mehler formula for the heat kernel. That is, we write it using its eigenfunction expansion
    \begin{align} \label{eqn: mehler formula}
        k_\xi(t,x,x') = \sum_{k\geq 0} e^{-\xi \mu_kt}\Phi_{\xi,k}(x)\Phi_{\xi,k}(x'),
    \end{align}
    and we compute \eqref{eqn: mehler formula} to prove the proposition. \\
    
    From Proposition \ref{prop: eigv and eigenf of Gn on R+}, using that $(1+\nu)_k \Gamma(1+\nu) = \Gamma(1+\nu + k)$, we have 
    \begin{align*}
        k_\xi(t,x,x') = 2\xi^{1+\nu}(xx')^{\frac{1}{2}+\nu}e^{-2\xi (1+\nu)t}e^{-\frac{\xi}{2}(x^2 + x'^2)} S_\xi (t,x,x'), 
    \end{align*}
    where we set
    \begin{align}
        S_\xi (t,x,x') = \sum_{k\geq 0} \frac{k!}{\Gamma(1+\nu + k)} e^{-4\xi kt}L_k^{(\nu)}(\xi x^2)L_k^{(\nu)}(\xi x'^2).
    \end{align}
    Recall the following summation formula from \cite[Eq. (20) page 189]{bateman1953higher},
    \begin{align}
        \sum_{k \geq 0} \frac{k!}{\Gamma(k+\nu +1)}L_k^{(\nu)}(x)L_k^{(\nu)}(y)z^k = \frac{1}{(1-z)(xyz)^{\nu/2}}e^{-z\frac{x+y}{1-z}}I_\nu\left( 2\frac{\sqrt{xyz}}{1-z}\right), \quad |z| < 1,
    \end{align}
    where $I_\nu$ is the modified Bessel function of order $\nu$. Using this formula with $z = e^{-4\xi t}$, replacing $x,y$ respectively by $\xi x^2,\xi x'^2$, we get 
    \begin{align}
        S_\xi (t,x,x') = \frac{1}{(1-e^{-4\xi t})(\xi xx')^\nu e^{-2\nu \xi t}}e^{\displaystyle -\xi (x^2+x'^2)\frac{ e^{-4\xi t}}{\displaystyle 1-e^{-4\xi t}}}I_\nu\left( 2\xi xx'\frac{e^{-2\xi t}}{1-e^{-4\xi t}}\right).
    \end{align}
    Thus, 
    \begin{align}
        k_\xi(t,x,x') = 2\xi(xx')^{\frac{1}{2}}\frac{e^{-2\xi t}}{1-e^{-4\xi t}}e^{\displaystyle -\xi (x^2+x'^2) \left(\frac{1}{2} +  \frac{e^{-4\xi t}}{1-e^{-4\xi t}}\right)}I_\nu\left( 2\xi xx'\frac{e^{-2\xi t}}{1-e^{-4\xi t}}\right).
    \end{align}
    Now, the statement follows from the identities 
    \begin{align*}
    \begin{array}{lcl}
        \displaystyle \frac{e^{-2\xi t}}{1-e^{-4\xi t}} &=& \displaystyle \frac{1}{2\sinh(2\xi t)}, \\[12pt]
       \displaystyle \frac{1}{2} + \frac{e^{-4\xi t}}{1-e^{-4\xi t}} &=& \frac{1}{2}\coth(2\xi t).
    \end{array}
    \end{align*}
\end{proof}

\subsection{A global Carleman estimate for the classical operator}\label{section: carleman estimate classical}

In this section, we obtain global parabolic Carleman estimates for systems \eqref{stm : adjoint system generalized fourier} with $q(x) = |x|$, using a weight inspired from the heat kernel that we computed in Section \ref{section: heat kernel computation}. We choose as a Carleman weight the function 
\begin{align}\label{eqn: carleman weight grushin classic}
    \varphi_\xi (t,x) = \frac{\xi}{2}(L^2 - |x|^2)\coth(2\xi t).
\end{align}
One may observe that this is exactly the Carleman weight of \cite[Section 2.1]{beauchard2020minimal}, and we obtain, modulo the singular term and the presence of a source term, the same global Carleman estimate. The proof follows the classical one for Carleman estimates, with ideas from the proof of \cite[Proposition 2.1]{beauchard2020minimal}. \\

Let us make another observation to why it allows us to treat the singular term. The space-dependent part of $\varphi_\xi$, namely $(L^2 - |x|^2)$, is actually the space-dependent part of the Carleman weights used to deal with the heat equation in presence of a singular potential (see \textit{e.g.} \cite{vancostenoble2008null, ervedoza2008control} among others). Indeed, the term $|x|^2$ is very much well-suited in the integration by parts computations to deal with the singular potential $1/|x|^2$. \\

We replace $\xi_n$ in system \eqref{stm : adjoint system generalized fourier} by a generic parameter $\xi > 0$, and we handle a source term. Namely, letting $T > 0$, we consider the following system
\begin{align}\label{stm : classical grushin nonhomog generic parameter}
    \left\{
    \begin{array}{lcll}
        \displaystyle \partial_t f - \Delta_x f + \xi^2|x|^2 + \frac{\nu^2 - \mathsf{H}}{|x|^2}f  &=& f_\xi ,& (t,x) \in (0,T) \times \Omega_x,  \\[6pt]
         f(t,x) &=& 0,& (t,x) \in (0,T) \times \partial\Omega_x, \\[6pt]
         f(0,x) &=& f_0(x), & x \in \Omega_x, 
    \end{array} \right.
\end{align}
with $f_0 \in L^2(\Omega_x)$, $f_\xi  \in L^2((0,T) \times \Omega_x)$, and $\nu > 0$. \\

Unless stated otherwise, in this section we do not assume \ref{assumption: d > 3} when $d_x \geq 2$. If $d_x = 2$, we ask that $0 \in \partial \Omega_x$ so that the corresponding Hardy constant \eqref{eqn: definition hardy constant} satisfies $\mathsf{H} > 0$.

\begin{proposition}[Boundary Carleman estimate]\label{prop: boundary carleman classical}
    Let $\varphi_\xi $ be defined by \eqref{eqn: carleman weight grushin classic}. Set $ L := \sup \{|x|, x \in \Omega_x \}$. For any solution $f$ of system \eqref{stm : classical grushin nonhomog generic parameter}, the function $g = fe^{-\varphi_\xi }$ satisfies 
    \begin{align}
        \int_{\Omega_x} |\nabla_x g(T,x)|^2 - &\frac{\xi^2L^2}{\sinh(2\xi t)^2}|g(T,x)|^2 + \frac{\nu^2 - \mathsf{H}}{|x|^2}|g(T,x)|^2 \ dx \notag \\
        &\leq \int_0^T \int_{\Omega_x} |f_\xi |^2e^{-2\varphi_\xi } \ dx \ dt + \xi L\int_0^T \frac{\sinh(4\xi t)}{\sinh(2\xi T)^2} \int_{\Gamma_+} |\nabla_x g \cdot \eta|^2 \ dS \ dt,
    \end{align}
    where $\Gamma_+ = \{x \in \partial \Omega_x, x \cdot \eta > 0\}$, and $\eta$ is the normal outward pointing unit vector at the boundary.
\end{proposition}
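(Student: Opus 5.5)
We follow the energy-method approach of \cite[Proposition 2.1]{beauchard2020minimal}. We conjugate the operator by the weight and show that a well-chosen energy of $g = fe^{-\varphi_\xi}$ is essentially nondecreasing in time, up to the source and boundary terms.

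\textbf{Conjugated equation.} With $f = g e^{\varphi_\xi}$, we compute
\begin{align*}
\partial_t g - \Delta_x g - 2\nabla\varphi_\xi\cdot\nabla g + \Big(\partial_t\varphi_\xi - |\nabla\varphi_\xi|^2 - \Delta_x\varphi_\xi + \xi^2|x|^2 + \frac{\nu^2-\mathsf{H}}{|x|^2}\Big) g = f_\xi e^{-\varphi_\xi}.
\end{align*}
Here $\nabla\varphi_\xi = -\xi x\coth(2\xi t)$ and $\Delta_x\varphi_\xi = -d_x\xi\coth(2\xi t)$. Also
\[
\partial_t\varphi_\xi = -\xi^2(L^2-|x|^2)/\sinh^2(2\xi t),
\qquad
|\nabla\varphi_\xi|^2 = \xi^2|x|^2\coth^2(2\xi t).
\]
The key algebraic identity is $\coth^2-1 = 1/\sinh^2$. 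It gives
\[
\partial_t\varphi_\xi - |\nabla\varphi_\xi|^2 + \xi^2|x|^2 = -\frac{\xi^2L^2}{\sinh^2(2\xi t)},
\]
which is independent of $x$. The heat-kernel-inspired weight is designed precisely so that this cancellation happens. The zeroth-order part therefore becomes $-\xi^2L^2/\sinh^2(2\xi t) + d_x\xi\coth(2\xi t) + (\nu^2-\mathsf{H})/|x|^2$. We split the conjugated operator into the symmetric part $S g = -\Delta g - \frac{\xi^2L^2}{\sinh^2}g + \frac{\nu^2-\mathsf{H}}{|x|^2}g$ and the antisymmetric part $A g = -2\nabla\varphi_\xi\cdot\nabla g - \Delta\varphi_\xi\, g = \xi\coth(2\xi t)(2x\cdot\nabla g + d_x g)$. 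This gives $\partial_t g + Sg + Ag = F := f_\xi e^{-\varphi_\xi}$.

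\textbf{Energy identity.} We define $E(t) = \langle S(t)g,g\rangle$, which is the left-hand side of the statement at time $t$. We then have
\[
\tfrac12 E'(t) = \langle S g,\partial_t g\rangle + \tfrac12\langle S' g, g\rangle,
\]
with $S' = \partial_t(-\xi^2L^2/\sinh^2) \ge 0$, so the second term is nonnegative. Substituting $\partial_t g = F - Sg - Ag$ gives
\[
\langle Sg,\partial_tg\rangle = -\|Sg\|^2 + \langle Sg,F\rangle - \langle Sg, Ag\rangle \ge -\tfrac14\|F\|^2 - \langle Sg,Ag\rangle.
\]
Hence $E' \ge -\tfrac12\|F\|^2 - 2\langle Sg,Ag\rangle$. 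The commutator term $\langle Sg, Ag\rangle$ is computed by integrating by parts with the dilation field $x\cdot\nabla$:
\begin{itemize}
\item For the Laplacian, Rellich/Pohozaev gives $\langle -\Delta g, 2x\cdot\nabla g + d_x g\rangle = 2\|\nabla g\|^2 - \int_{\partial\Omega_x}(x\cdot\eta)|\partial_\eta g|^2$, using $g=0$ on the boundary.
\item The constant potential term gives zero, since $2x\cdot\nabla + d_x$ is antisymmetric.
\item For the singular potential, $\int\frac{1}{|x|^2}g(2x\cdot\nabla g + d_x g) = \int \frac{d_x - \operatorname{div}(x/|x|^2)}{|x|^2}\cdots$; since $\operatorname{div}(x|x|^{-2}) = (d_x-2)|x|^{-2}$, this yields $2\int|g|^2/|x|^2$. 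This is why $|x|^2$ in the weight pairs well with $1/|x|^2$.
\end{itemize}
Altogether,
\[
\langle Sg,Ag\rangle = \xi\coth(2\xi t)\Big(2\|\nabla g\|^2 + 2(\nu^2-\mathsf{H})\|g/|x|\|^2 - \int_{\partial\Omega_x}(x\cdot\eta)|\partial_\eta g|^2\Big).
\]

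\textbf{Closing with the boundary term.} If $\nu^2\ge\mathsf{H}$, the interior terms have a good sign. If $\nu^2<\mathsf{H}$, Hardy \eqref{eqn: hardy inequality} with $\nu>0$ still gives $\|\nabla g\|^2 + (\nu^2-\mathsf{H})\|g/|x|\|^2 \ge 0$; this is the main delicate point, namely checking the sign when the singular potential is attractive, and extending the integration by parts near $0$ via the domain $D(G_\xi)$ and a density argument. We then expect $2\langle Sg,Ag\rangle \le 2\xi\coth(2\xi t)\int_{\Gamma_+}(x\cdot\eta)|\partial_\eta g|^2 \le 2\xi L\coth(2\xi t)\int_{\Gamma_+}|\partial_\eta g|^2$, dropping the $\Gamma_-$ part. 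This gives
\[
E'(t) \ge -\tfrac12\|F\|^2 - 2\xi L\coth(2\xi t)\int_{\Gamma_+}|\partial_\eta g|^2 .
\]
To obtain the $\sinh(4\xi t)/\sinh^2(2\xi T)$ factor, we apply this to the weighted energy $\sinh^2(2\xi t)E(t)$ rather than to $E(t)$ itself. Its derivative adds $4\xi\sinh\cosh\,E$, which we need to control using $\langle S'g,g\rangle$ and the $-\|Sg\|^2$ term that was dropped above. This is the step we expect to be the main obstacle: keeping enough of these good terms to absorb the $\sinh\cosh\, E$ contribution, following \cite[Proposition 2.1]{beauchard2020minimal}. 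Once that is done, we integrate from $0$ to $T$. The weight makes $\sinh^2(2\xi t)E(t)\to0$ as $t\to0$, because $e^{-\varphi_\xi}$ vanishes to infinite order. Finally we divide by $\sinh^2(2\xi T)$, using $2\sinh\cosh = \sinh(4\xi t)$, and bound $\sinh^2(2\xi t)/\sinh^2(2\xi T)\le1$ in the source term.
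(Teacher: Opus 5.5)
There is a genuine gap: the central energy inequality has a sign error, and the step you defer as the main obstacle is where the proof actually happens.

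\textbf{The sign error.} Your splitting is the paper's ($S=\mathsf{P}_1$, $\partial_t+A=\mathsf{P}_2$), and your formulas for the conjugated operator and for $\langle Sg,Ag\rangle$ are correct. The argument fails at the inequality $-\|Sg\|^2+\langle Sg,F\rangle-\langle Sg,Ag\rangle\ge-\tfrac14\|F\|^2-\langle Sg,Ag\rangle$. Young's inequality gives the reverse bound, $-\|Sg\|^2+\langle Sg,F\rangle\le\tfrac14\|F\|^2$, and this quantity has no lower bound in terms of $\|F\|$. So your differential inequality $E'\ge\cdots$ points the wrong way. Integrating it can only bound $E(T)$ from below, whereas the statement is an upper bound on $E(T)$. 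In the correct direction you get
\[
E'\le\tfrac12\|F\|^2-2\langle Sg,Ag\rangle+\langle S'g,g\rangle .
\]
Here $\langle S'g,g\rangle\ge 0$ becomes a bad term that cannot be dropped. The interior terms of $\langle Sg,Ag\rangle$, which you discarded for having a ``good sign'', turn out to be indispensable.

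\textbf{The missing cancellation.} Your proposed fix, absorbing $4\xi\sinh\cosh\,E$ with $-\|Sg\|^2$, cannot work. The term $-\|Sg\|^2$ is used up by Young against $\langle Sg,F\rangle$. Moreover, $\theta\langle Sg,g\rangle$ cannot be absorbed by $\|Sg\|^2$ without leaving an uncontrolled $\theta^2\|g\|^2$. What is missing is an exact cancellation. With $\theta(t)=\xi\coth(2\xi t)$, the time-dependent potential in $S$ is $\tfrac{L^2}{2}\theta'(t)=-\xi^2L^2/\sinh^2(2\xi t)$. Since $\theta''=-4\theta\theta'$, you have $\langle S'g,g\rangle=-4\theta\cdot\tfrac{L^2}{2}\theta'\|g\|^2$. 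Combined with your formula for $\langle Sg,Ag\rangle$, this gives
\[
\langle S'g,g\rangle-2\langle Sg,Ag\rangle=-4\theta(t)E(t)+2\theta(t)\int_{\partial\Omega_x}(x\cdot\eta)|\partial_\eta g|^2 ,
\]
and therefore
\[
E'+4\theta E=-2\|Sg\|^2+2\langle Sg,F\rangle+2\theta\int_{\partial\Omega_x}(x\cdot\eta)|\partial_\eta g|^2\le\tfrac12\|F\|^2+2\theta\int_{\Gamma_+}(x\cdot\eta)|\partial_\eta g|^2 .
\]
Since $\frac{d}{dt}\big(\sinh^2(2\xi t)E\big)=\sinh^2(2\xi t)\,(E'+4\theta E)$, your final integration step then goes through unchanged. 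No sign information on $E$ or on $\|\nabla g\|^2+(\nu^2-\mathsf{H})\|g/|x|\|^2$ is used, so the Hardy discussion you flag as delicate is unnecessary. This identity is exactly the paper's $\int_{\Omega_x}\mathsf{P}_1g\,\mathsf{P}_2g=\tfrac12D'+2\theta D-\theta\int_{\partial\Omega_x}(x\cdot\eta)|\nabla_xg\cdot\eta|^2$, with $D=E$.

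A minor point: $\varphi_\xi=0$ wherever $|x|=L$, so $e^{-\varphi_\xi}$ does not vanish to infinite order there. The limit $\sinh^2(2\xi t)E(t)\to0$ should instead come from $\|g(t)\|_{L^2}\to0$ together with parabolic smoothing.
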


Let us make some preliminary computations. Setting 
\begin{align*}
    f(t,x) = g(t,x)e^{\varphi_\xi (t,x)}, \quad (t,x) \in (0,T) \times \Omega_x,
\end{align*}
a few computations show that 
\begin{align}
    \partial_t f &= \partial_t ge^{\varphi_\xi (t,x)} - \frac{\xi^2(L^2 - |x|^2)}{\sinh(2\xi t)^2}g(t,x)e^{\varphi_\xi (t,x)}, \\
    \Delta_x f &=  \left( \Delta_x g(t,x) - 2\xi \coth(2\xi t) x \cdot \nabla_x g(t,x) - d_x \xi \coth(2\xi t)g(t,x) + \xi^2|x|^2\coth(2\xi t)^2g(t,x) \right) e^{\varphi_\xi (t,x)}.
\end{align}
Thus, using that 
\begin{align}\label{eqn: hyperbolic sin cot identity}
    \frac{1}{\sinh(2\xi t)^2} - \coth(2\xi t)^2 = -1, 
\end{align}
we get, writing $P_\xi = \partial_t + G_\xi$, that
\begin{align}\label{eqn: def P_phi,xi}
    P_{\varphi_\xi } g := e^{-\varphi_\xi } P_\xi \left( ge^{\varphi_\xi } \right) = \partial_t g - \Delta_x g + \xi \coth(2\xi t) \left( 2x \cdot \nabla_x g + d_xg \right) - \frac{\xi^2L^2}{\sinh(2\xi t)^2}g + \frac{\nu^2 - \mathsf{H}}{|x|^2}g.
\end{align}

Hence, if $f$ is solution of system \eqref{stm : classical grushin nonhomog generic parameter}, we get that $g$ must solve
\begin{align}\label{stm: carleman conjugated solution}
    \left\{
    \begin{array}{lcll}
       \displaystyle \partial_t g - \Delta_x g + \xi \coth(2\xi t) \left( 2x \cdot \nabla_x g + d_xg \right) - \frac{\xi^2L^2}{\sinh(2\xi t)^2}g + \frac{\nu^2 - \mathsf{H}}{|x|^2}g  &=& e^{-\varphi_\xi }f_\xi ,& (0,T) \times \Omega_x,  \\[6pt]
         g(t,x) &=& 0,& (0,T) \times \partial\Omega_x, \\[6pt]
         \underset{t \rightarrow 0^+}{\lim} \|g(t)\|_{L^2(\Omega_x)} &=& 0.
    \end{array} \right.
\end{align}

\begin{proof}[Proof of Proposition \ref{prop: boundary carleman classical}]
    We will follow part of the proof of \cite[Proposition 2.1]{beauchard2020minimal}, and use their notations. As shown by system \eqref{stm: carleman conjugated solution}, $g = fe^{-\varphi_\xi }$ satisfies 
    \begin{align*}
        P_{\varphi_\xi }g = f_\xi e^{-\varphi_\xi}, 
    \end{align*}
    where $P_{\varphi_\xi}$ is defined in \eqref{eqn: def P_phi,xi}. We set $\theta(t) = \xi \coth(2\xi t)$, so that 
    \begin{align*}
        P_{\varphi_\xi }g =  \partial_t g - \Delta_x g + \theta(t) \left( 2x \cdot \nabla_x g + d_x g \right) + \frac{L^2}{2}\theta'(t)g + \frac{\nu^2 - \mathsf{H}}{|x|^2}g.
    \end{align*}
    We set 
    \begin{align}\left\{
    \begin{array}{lll}
        \mathsf{P}_1 g &=& \displaystyle -\Delta_x g + \frac{L^2}{2}\theta'(t) g + \frac{\nu^2 - \mathsf{H}}{|x|^2}g, \\[10pt]
        \mathsf{P}_2 g &=& \partial_t g + 2\theta(t)x \cdot \nabla_x g + d_x \theta(t) g.
    \end{array}\right. 
    \end{align}
    We have
    \begin{align*}
        \int_{\Omega_x} \left| P_{\varphi_\xi }g\right|^2 = \int_{\Omega_x} \left| f_\xi \right|^2e^{-2\varphi_\xi } \geq 2 \int_{\Omega_x} \mathsf{P}_1g \mathsf{P}_2g.
    \end{align*}
    We want to compute the cross-product on the right-hand side,
    \begin{align*}
        \int_{\Omega_x} \mathsf{P}_1g \mathsf{P}_2 g = \sum_{i=1}^3 \sum_{j=1}^3 I_{ij},
    \end{align*}
    where $I_{ij}$ is the integral on $\Omega_x$ of the product between the $i$-th term of $\mathsf{P}_1g$ and the $j$-th term of $\mathsf{P}_2g$. We have, using the fact that $g \in H_0^1(\Omega_x)$, and denoting by $\eta$ the outward pointing normal vector to $\partial \Omega_x$,
    \begin{align*}
        \begin{array}{lll}
          I_{11} &=& \frac{1}{2} \int_{\Omega_x} \partial_t (|\nabla_xg|^2),  \\[8pt]
          I_{12} &=& -2\theta(t) \left(\frac{1}{2} \int_{\partial \Omega_x} (x \cdot \eta ) |\nabla_x g \cdot \eta|^2  + \left( \frac{d_x}{2} - 1 \right)\int_\Omega |\nabla_x g|^2\right),\\[8pt]
          I_{13} &=& d_x\theta(t)\int_{\Omega_x} |\nabla_x g|^2, \\[8pt]
          I_{21} &=& \frac{L^2}{4} \theta'(t) \int_{\Omega_x} \partial_t (|g|^2), \\[8pt]
          I_{22} &=& -d_x\frac{L^2}{2}\theta'(t)\theta(t) \int_{\Omega_x} |g|^2, \\[8pt]
          I_{23} &=& d_x\frac{L^2}{2}\theta'(t)\theta(t) \int_{\Omega_x} |g^2|, \\[8pt]
          I_{31} &=& \frac{1}{2} \int_{\Omega_x} \frac{\nu^2 - \mathsf{H}}{|x|^2}\partial_t(|g^2|), \\[8pt]
          I_{32} &=& (2 - d_x ) \theta(t) \int_{\Omega_x} \frac{\nu^2 - \mathsf{H}}{|x|^2} |g|^2, \\[8pt]
          I_{33} &=& d_x\theta(t) \int_{\Omega_x} \frac{\nu^2 - \mathsf{H}}{|x|^2} |g^2|.
        \end{array}
    \end{align*}
    If we set 
    \begin{align*}
        D(t) = \int_{\Omega_x} |\nabla_x g|^2 + \frac{L^2}{2}\theta'(t) |g|^2 + \frac{\nu^2 - \mathsf{H}}{|x|^2} |g|^2,
    \end{align*}
    using the fact that $\theta''(t) = -4\theta'(t)\theta(t)$, we observe that summing everything above gives 
    \begin{align}
       \int_{\Omega_x} \mathsf{P}_1g \mathsf{P}_2 g = \frac{1}{2} D'(t) + 2\theta(t)D(t) - \theta(t) \int_{\partial \Omega_x} (x \cdot \eta ) |\nabla_x g \cdot \eta|^2.
    \end{align}
    It follows that 
    \begin{align}
        \int_{\Omega_x} \left| f_\xi \right|^2e^{-2\varphi_\xi }  + 2\theta(t) \int_{\Gamma_+} (x \cdot \eta ) |\nabla_x g \cdot \eta|^2 \geq D'(t) + 4\theta(t)D(t). 
    \end{align}
    We can therefore conclude as in the proof of \cite[Proposition 2.1]{beauchard2020minimal} by integrating on $(0,T)$ the above inequality to obtain the Carleman estimate. 
\end{proof}

\subsection{Boundary observability}\label{section: boundary observability}

We now obtain a cost of boundary observability of systems \eqref{stm : adjoint system generalized fourier} with $q(x) = |x|$ from Proposition \ref{prop: boundary carleman classical}. We again here authorize $d_x = 2$, in which case we impose $0 \in \partial \Omega_x$. Moreover, we still replace $\xi_n$ by a generic parameter $\xi > 0$ in system \eqref{stm : adjoint system generalized fourier}, and thus look at solutions of system \eqref{stm : classical grushin nonhomog generic parameter} with $f_\xi = 0$.

\begin{proposition}\label{prop: cost of boundary observability}
    For every $T > 0$, there exists $\xi_0 > 0$ and a constant $C > 0$, such that for every $\xi \geq \xi_0$, the solution $f$ of system \eqref{stm : classical grushin nonhomog generic parameter} with $f_\xi = 0$ satisfies
    \begin{align}
        \int_{\Omega_x} |f(T,x)|^2\ dx \leq C\xi e^{L^2/2T}e^{\xi L^2} \int_0^T \int_{\partial \Omega_x} |\partial_{\nu_x} f(t,x)|^2 \ dS \ dt,
    \end{align}
    where $L = \sup \{|x|, \ x \in \partial\Omega_x\}$.
\end{proposition}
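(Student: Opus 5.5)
The plan is to apply the Carleman estimate of Proposition \ref{prop: boundary carleman classical} with $f_\xi = 0$ and to convert its left-hand side, which is stated in terms of $g(T) = f(T)e^{-\varphi_\xi(T)}$, into a lower bound on $\|f(T)\|_{L^2}^2$. The conversion has three parts. First, bound the conjugated energy at time $T$ from below by a multiple of $\|g(T)\|^2$. Second, undo the weight, which costs a factor $e^{2\sup\varphi_\xi(T)}$. Third, bound the boundary term on the right-hand side by the normal derivative of $f$.

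\textbf{Step 1: lower bound on the conjugated energy.} Since $\nu>0$, Hardy's inequality \eqref{eqn: hardy inequality} gives
\begin{align*}
\int_{\Omega_x} |\nabla_x g|^2 + \frac{\nu^2-\mathsf{H}}{|x|^2}|g|^2 \ dx \geq \frac{\nu^2}{\mathsf{H}}\int_{\Omega_x}|\nabla_x g|^2 \ dx
\end{align*}
when $\nu^2<\mathsf{H}$, and the same bound with constant $1$ when $\nu^2\geq \mathsf{H}$. Combining this with the Poincaré inequality yields a constant $c_0>0$, depending on $\Omega_x$ and $\nu$, such that the left-hand side of the Carleman estimate at time $T$ is at least
\begin{align*}
\Big(c_0 - \frac{\xi^2L^2}{\sinh(2\xi T)^2}\Big)\|g(T)\|_{L^2}^2 .
\end{align*}
For fixed $T$, the factor $\xi^2/\sinh(2\xi T)^2$ decays exponentially as $\xi \to \infty$. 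So there is $\xi_0(T)$ such that for $\xi\geq\xi_0$ the bracket is at least $c_0/2$. This is exactly why the statement allows $\xi_0$ to depend on $T$.

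\textbf{Step 2: removing the weight.} We have $\varphi_\xi(T,x)\leq \frac{\xi L^2}{2}\coth(2\xi T)$. Using $\coth s\leq 1+1/s$, this gives
\begin{align*}
2\varphi_\xi(T,x)\leq \xi L^2 + \frac{L^2}{2T}.
\end{align*}
Hence
\begin{align*}
\|f(T)\|^2\leq e^{\xi L^2}e^{L^2/2T}\,\|g(T)\|^2 ,
\end{align*}
which produces precisely the exponential factors in the statement.

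\textbf{Step 3: the boundary term.} On $\partial\Omega_x$ we have $f=0$, so $g = fe^{-\varphi_\xi}$ also vanishes there. Therefore $\nabla_x g\cdot\eta = e^{-\varphi_\xi}\partial_{\nu_x} f$ on the boundary. Since $\varphi_\xi\geq 0$ (as $|x|\leq L$), this gives $|\nabla_x g\cdot\eta|^2\leq |\partial_{\nu_x}f|^2$. It remains to bound the time factor $\sinh(4\xi t)/\sinh(2\xi T)^2$ for $t\in(0,T)$. It is at most $\sinh(4\xi T)/\sinh(2\xi T)^2 = 2\coth(2\xi T)$, which is bounded by $3$ for $\xi\geq\xi_0$. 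Extending the integral from $\Gamma_+$ to all of $\partial\Omega_x$ then gives a right-hand side of at most
\begin{align*}
3\xi L\int_0^T\int_{\partial\Omega_x}|\partial_{\nu_x} f|^2 \ dS \ dt .
\end{align*}

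\textbf{Conclusion and main obstacle.} Chaining Steps 1–3 gives the claim with $C = 6L/c_0$. The step needing most care is the coercivity in Step 1 in the regime $\nu^2<\mathsf{H}$, where the potential term is negative. The bound relies on $\mathsf{H}$ being the Hardy constant for the geometry considered, including the case $0\in\partial\Omega_x$. One also has to justify using the Carleman estimate for $L^2$ initial data, since $g(t)\to 0$ as $t\to0^+$ is needed. I would handle this by a density argument with regular initial data, or by applying the estimate on $(\epsilon,T)$ and letting $\epsilon\to0$.
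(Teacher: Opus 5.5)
Your proposal is correct and follows the paper's proof essentially step by step. The shared steps are: Hardy (when $\nu^2<\mathsf{H}$) plus Poincar\'e to make the conjugated energy at time $T$ coercive for $\xi$ large; the bound $\sinh(4\xi t)/\sinh(2\xi T)^2\le 2\coth(2\xi T)$ on the boundary term; removal of the weight via $\sup_x\varphi_\xi(T,x)\le \frac{\xi L^2}{2}\coth(2\xi T)$; and $\coth s\le 1+1/s$. The only differences are cosmetic: your constant $3$ instead of the paper's $4$, and your explicit use of $\varphi_\xi\ge 0$ on $\partial\Omega_x$ and of a density argument, which the paper leaves implicit.
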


\begin{proof}
    Let $T > 0$, and $f$ a solution of system \eqref{stm : classical grushin nonhomog generic parameter} with $f_\xi = 0$, and with $\xi > 0$ sufficiently large. We apply the Carleman estimate of Proposition \ref{prop: boundary carleman classical} which gives, for $g = fe^{-\varphi_\xi}$ with $\varphi_\xi$ defined in \eqref{eqn: carleman weight grushin classic},
    \begin{align}\label{eqn: carleman proof lemma}
        \int_{\Omega_x} |\nabla_x g(T,x)|^2 - &\frac{\xi^2L^2}{\sinh(2\xi t)^2}|g(T,x)|^2 + \frac{\nu^2 - \mathsf{H}}{|x|^2}|g(T,x)|^2 \ dx \notag \\
        &\leq \xi L\int_0^T \frac{\sinh(4\xi t)}{\sinh(2\xi T)^2} \int_{\Gamma_+} |\nabla_x g \cdot \eta|^2 \ dS \ dt.
    \end{align}
    We first bound from below the left-hand term of \eqref{eqn: carleman proof lemma}. If $\nu^2 - \mathsf{H} \geq 0$, we have, using Poincaré's inequality, 
    \begin{align*}
        &\int_{\Omega_x} |\nabla_x g_\xi (T,x)|^2 - \frac{\xi^2L^2}{\sinh(2\xi T)^2}|g_\xi (T,x)|^2 + \frac{\nu^2 - \mathsf{H}}{|x|^2}|g_\xi (T,x)|^2 \ dx \\
        &\geq \int_{\Omega_x} |\nabla_x g_\xi (T,x)|^2 - \frac{\xi^2L^2}{\sinh(2\xi T)^2}|g_\xi (T,x)|^2 \ dx \\
        &\geq \int_{\Omega_x} C(\Omega_x)|g_\xi (T,x)|^2 - \frac{\xi^2L^2}{\sinh(2\xi T)^2}|g_\xi (T,x)|^2 \ dx.
    \end{align*}
    Now, using the fact that for any $T > 0$, the function $\xi \mapsto \xi^2L^2/\sinh(2\xi T)^2$ is decreasing and converges to $0$ as $\xi \rightarrow + \infty$, we get that there exists some $C' > 0$, such that for every $\xi > 0$ large enough depending on $T$ and $L$, 
    \begin{align}\label{eqn: obs via carl proof 1}
        &\int_{\Omega_x} |\nabla_x g_\xi (T,x)|^2 - \frac{\xi^2L^2}{\sinh(2\xi T)^2}|g_\xi (T,x)|^2 + \frac{\nu^2 - \mathsf{H}}{|x|^2}|g_\xi (T,x)|^2 \ dx \geq C'\int_{\Omega_x} |g_\xi (T,x)|^2\ dx.
    \end{align}
    On the other hand, if $\nu^2 - \mathsf{H} \leq 0$, we use Hardy inequality \eqref{eqn: hardy inequality}. We have 
    \begin{align}
        \int_{\Omega_x} \frac{\nu^2 - \mathsf{H}}{|x|^2}|g_\xi (T,x)|^2 \ dx \geq \left( \frac{\nu^2}{\mathsf{H}}- 1 \right) \int_{\Omega_x} |\nabla_x g_\xi (T,x)|^2 \ dx. 
    \end{align}
    Thus, 
    \begin{align*}
        &\int_{\Omega_x} |\nabla_x g_\xi (T,x)|^2 - \frac{\xi^2L^2}{\sinh(2\xi T)^2}|g_\xi (T,x)|^2 + \frac{\nu^2 - \mathsf{H}}{|x|^2}|g_\xi (T,x)|^2 \ dx \\
        &\geq \int_{\Omega_x}  \frac{\nu^2}{\mathsf{H}}|\nabla_x g_\xi (T,x)|^2 - \frac{\xi^2L^2}{\sinh(2\xi T)^2}|g_\xi (T,x)|^2 \ dx.
    \end{align*}
    Since $\nu > 0$, we can proceed as for the case $\nu^2 - \mathsf{H} \geq 0$. There exists some $C > 0$, such that for every $\xi > 0$ large enough, we have 
    \begin{align}\label{eqn: obs via carl proof 2}
        \int_{\Omega_x} |\nabla_x g_\xi (T,x)|^2 - \frac{\xi^2L^2}{\sinh(2\xi T)^2}|g_\xi (T,x)|^2 + \frac{\nu^2 - \mathsf{H}}{|x|^2}|g_\xi (T,x)|^2 \ dx\geq C \int_{\Omega_x} |g_\xi (T,x)|^2 \ dx.
    \end{align}
    We now bound from above the right-hand side of \eqref{eqn: carleman proof lemma}.
    Using the facts that 
    \begin{align*}
    \begin{array}{llll}
         \displaystyle \frac{\sinh(4\xi t)}{\sinh(2\xi T)^2} &\leq& 2 \coth(2\xi T), & \text{for every } t \in (0,T), \\[8pt]
         \coth(2\xi T)  &\leq& 2, & \text{for $\xi$ large enough,} 
    \end{array}        
    \end{align*}
    we deduce that 
    \begin{align}\label{eqn: obs via carl proof 3}
         \xi L\int_0^T \frac{\sinh(4\xi t)}{\sinh(2\xi T)^2} \int_{\Gamma_+} |\nabla_x g \cdot \eta|^2 \ dS \ dt \leq 4\xi L \int_0^T \int_{\partial \Omega_x} |\nabla_x g \cdot \eta|^2 \ dS \ dt.
    \end{align}
    Combining \eqref{eqn: carleman proof lemma} with \eqref{eqn: obs via carl proof 1}, \eqref{eqn: obs via carl proof 2}, \eqref{eqn: obs via carl proof 3}, replacing $g$ by its definition and using that $f$ vanishes on $\partial \Omega_x$, we obtain 
    \begin{align}
        \int_{\Omega_x} |f(T,x)|^2\ dx \leq C\xi e^{L^2\xi\coth(2\xi T)} \int_0^T \int_{\partial \Omega_x} |\partial_{\nu_x} f(t,x)|^2 \ dS \ dt.
    \end{align}
    This concludes the proof using the inequality $\coth(s) \leq 1 + 1/s$, $s > 0$.
\end{proof}

\section{Spectral analysis}\label{section: spectral analysis}

This section is devoted to the spectral analysis of the Fourier components of our singular operators, introduced in \eqref{eqn: definition G_xi}. We consider again in this section a generic parameter $\xi \in (0,+\infty)$ instead of $\xi_n$. Let us first reintroduce properly the operators with a general coefficient $q$. \\

Let $\nu \geq 0$ and $\xi > 0$. Recall   
\begin{align*}
    G_\xi = -\partial_x^2 + \xi^2q(x)^2 + \frac{\nu^2 - \mathsf{H}}{|x|^2},      
\end{align*}
defined on $C_c^\infty(\Omega_x)$. We consider its Friedrich extension, and we keep the notation $G_\xi$. That is, we first set $H_{0,\nu}^1(\Omega_x)$ to be the completion of $C_c^\infty(\Omega_x)$ with respect to the norm $\|\cdot\|_\nu$, where 
\begin{align}
    \|f\|_\nu := \left( \int_{\Omega_x} |\nabla f(x)|^2 + \left( \xi^2q(x)^2 + \frac{\nu^2 - \mathsf{H}}{x^2} \right)f(x)^2 \ dx\right)^{1/2}.
\end{align}
Thanks to Hardy's inequalities \eqref{eqn: hardy inequality}, when $\nu > 0$, $\|\cdot\|_\nu$  and the usual $H_0^1(\Omega_x)$-norm are equivalent. Thus, we have $H_{0,\nu}^1(\Omega_x) = H_0^1(\Omega_x)$. As shown in \cite[Section 5]{vazquez2000hardy}, in the critical case $\nu = 0$ we have the strict inclusion 
\begin{align*}
   H_0^1(\Omega_x) \subsetneq H_{0,\nu}^1(\Omega_x).
\end{align*}

We consider the operator
\begin{align}\label{eqn: definition G_xi bis}
    \begin{array}{ccl}
       D(G_\xi) &=& \left\{f \in H_{0,\nu}^1(\Omega_x), \quad G_\xi f \in L^2(\Omega_x) \right\},\\[6pt]
        G_\xi &=& \displaystyle -\Delta_x^2 + \xi^2 q(x)^2 + \frac{\nu^2 - \mathsf{H}}{x^2}.
    \end{array}
\end{align}
Observe that since we take $\nu > 0$, \eqref{eqn: definition G_xi bis} is \eqref{eqn: definition G_xi}, but we recall it for clarity. We do not specify in the notation $D(G_\xi)$ the dependence on $\nu$, nor on $\gamma$ and the dimension, as there shall not be any confusion. This operator is positive-definite, self-adjoint with compact resolvent, and from Sturm-Liouville theory, its eigenvalues are simple. Hence, its eigenvalues form an increasing sequence of real numbers, 
\begin{align}
  0 \leq \lambda_{\xi,0} < \lambda_{\xi,1} < ... < \lambda_{\xi,k} < ...
\end{align}

We first discuss Agmon estimates in Section \ref{section : agmon estimates}. Then, we treat the classical operators ($q(x) = |x|^{\gamma}$) when $d_x = 1$ in Section \ref{section: spectral analysis classical gamma = 1} when $\gamma = 1$, and Section \ref{section: spectral analysis classical gamma > 1} when $\gamma > 1$. Then, we analyze the classical operators when $d_x \geq 3$ in Section \ref{section: spectral analysis classical d > 3}. Finally, we discuss the generalized operator in Section \ref{section: spectral analysis generalized}. 

\subsection{Agmon estimates}\label{section : agmon estimates}

In this section, we give upper estimates on the $L^2$-norm in the observation set $\omega_x$ of some eigenfunctions of the operators introduced above in \eqref{eqn: definition G_xi bis}. This will be of use to obtain a lower bound on the minimal time of observability in Section \ref{section: proofs negative results}. We also assume the following. \\

We do not invoke assumption \ref{assumption: d > 3}, but we assume that $0 \in \overline{\Omega_x}$, $0 \in \partial\Omega_x$ if $d_x = 2$, so that the corresponding Hardy constant $\mathsf{H}$ introduced in \eqref{eqn: definition hardy constant} satisfies $\mathsf{H} > 0$, and $\nu > 0$. We consider here the generalized operator 
\begin{align}
    G_\xi = -\Delta_x^2  + \xi^2q(x)^2  + \frac{\nu^2 - \mathsf{H}}{|x|^2},
\end{align}
where $q \in C^0(\overline{\Omega_x})$ satisfies 
\begin{enumerate}[label = ({$\operatorname{H}'_q$}), ref = ($\operatorname{H}'_q$)]
    \item \label{assumption: weak assumption q}$q(0) = 0$, $q(x) \neq 0$ for every $x \neq 0$.
\end{enumerate}

Observe that assumption \ref{assumption: weak assumption q} is weaker than \ref{assumption: q}. We denote the eigenfunctions of $G_\xi$, normalized in $L^2$-norm, by $u_{\xi,k}$, associated to $\lambda_{\xi,k}$. For every $\xi > 0$, we extract one eigenvalue of $G_\xi$ that we denote $\lambda_\xi$, and the corresponding normalized eigenfunction is denoted by $u_\xi$. \\

Introduce the set 
\begin{align}\label{eqn: definition F delta}
    F_\delta := \left\{ x \in \Omega_x, \quad q(x)^2 \leq \delta \right\}.
\end{align}
By assumption \ref{assumption: weak assumption q}, there exists $\delta_0$, such that for every $0 < \delta < \delta_0$, the set $F_\delta$ is a connected neighborhood of $0$ in $\Omega_x$.

\begin{proposition}[Agmon estimate]\label{prop: agmon decay grushin general}
Under the setting described above, let $\omega_x \subset \Omega_x$ be such that $\overline{\omega_x} \cap \{0\} = \emptyset$. Assume that, as $\xi \rightarrow + \infty$, $ \lambda_\xi= o(\xi^2)$. Then, there exists $\delta_0 > 0$, such that for every $\delta \in (0,\delta_0)$, there exists $\xi_0 > 0$ and $C > 0$, such that for every $\xi \geq \xi_0$,
\begin{align}
    \int_{\omega_x}  u_\xi(x)^2 \ dx \leq C e^{-2\xi(1-\delta)d_{\operatorname{agm},\delta}(\omega_x,F_\delta)},
\end{align}
where $d_{\operatorname{agm},\delta}$ is defined by \eqref{eqn: definition Agmon distance} with $E = \delta$, and $F_\delta$ is defined by \eqref{eqn: definition F delta} and satisfies $F_\delta \cap \overline{\omega} = \emptyset$.
\end{proposition}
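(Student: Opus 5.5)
We follow the classical Agmon argument (Helffer–Sjöstrand style), with the singular potential handled by the Hardy inequality. The decay weight will be $\Phi(x) := (1-\delta)\, d_{\operatorname{agm},\delta}(x,F_\delta)$. This function is Lipschitz and vanishes on $F_\delta$. Almost everywhere it satisfies the eikonal-type inequality
\begin{align*}
|\nabla \Phi|^2 \leq (1-\delta)^2 \left(q(x)^2-\delta\right)_+ .
\end{align*}
The strategy is to prove an $L^2$ bound on $e^{\xi\Phi}u_\xi$, uniform in $\xi$. Restricting to $\omega_x$, where $\Phi \geq (1-\delta)d_{\operatorname{agm},\delta}(\omega_x,F_\delta)$, then gives the claimed estimate. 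We choose $\delta_0$ so small that $F_\delta$ is a connected neighbourhood of $0$ with $F_\delta\cap\overline{\omega_x}=\emptyset$; this is possible since $q$ is continuous and does not vanish on $\overline{\omega_x}$.

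The first step is the Agmon identity. For a real bounded Lipschitz $\phi$ and $u\in D(G_\xi)$ real with eigenvalue $\lambda_\xi$, integration by parts gives
\begin{align*}
\int_{\Omega_x} |\nabla(e^{\phi}u)|^2 + \left(\xi^2 q^2 + \frac{\nu^2-\mathsf H}{|x|^2} - \lambda_\xi - |\nabla\phi|^2\right) e^{2\phi}u^2 \, dx = 0 .
\end{align*}
The boundary terms vanish because $e^{\phi}u\in H_0^1(\Omega_x)$. If $\Phi$ is unbounded or not smooth enough, we first truncate by $\min(\Phi,M)$ and let $M\to\infty$ at the end. We apply this with $\phi=\xi\Phi$.

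The second step controls the singular term. When $\nu^2\geq\mathsf H$ it is nonnegative and can be dropped. Otherwise the Hardy inequality \eqref{eqn: hardy inequality} applied to $e^{\xi\Phi}u_\xi$ gives
\begin{align*}
\int |\nabla(e^{\xi\Phi}u_\xi)|^2 + \frac{\nu^2-\mathsf H}{|x|^2}e^{2\xi\Phi}u_\xi^2 \geq \frac{\nu^2}{\mathsf H}\int|\nabla(e^{\xi\Phi}u_\xi)|^2 \geq 0 .
\end{align*}
In both cases
\begin{align*}
\int \left(\xi^2 q^2 - \xi^2|\nabla\Phi|^2 - \lambda_\xi\right)e^{2\xi\Phi}u_\xi^2 \leq 0 .
\end{align*}

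The third step splits $\Omega_x$ into $F_\delta$ and its complement. On $\Omega_x\setminus F_\delta$ we have $q^2>\delta$, so
\begin{align*}
q^2 - (1-\delta)^2(q^2-\delta) \geq \delta q^2 \geq \delta^2 .
\end{align*}
Since $\lambda_\xi=o(\xi^2)$, for $\xi\geq\xi_0$ the coefficient $\xi^2q^2-\xi^2|\nabla\Phi|^2-\lambda_\xi$ is at least $\xi^2\delta^2/2$ there. On $F_\delta$ we have $\Phi=0$, and the coefficient is at least $-\lambda_\xi$. This yields
\begin{align*}
\frac{\xi^2\delta^2}{2}\int_{\Omega_x\setminus F_\delta} e^{2\xi\Phi}u_\xi^2 \leq \lambda_\xi \int_{F_\delta}u_\xi^2 \leq \lambda_\xi .
\end{align*}
Hence $\int_{\Omega_x\setminus F_\delta}e^{2\xi\Phi}u_\xi^2 \leq 2\lambda_\xi/(\xi^2\delta^2) = o(1)$. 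Restricting to $\omega_x\subset\Omega_x\setminus F_\delta$ concludes, with $C$ depending on $\delta$.

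The main obstacle is technical rather than conceptual. One has to justify the Agmon identity for the Friedrichs extension when the inverse-square term is negative and singular at $0$, which may lie in the interior of $\Omega_x$. This requires checking that $e^{\xi\Phi}u_\xi$ belongs to the form domain $H_0^1(\Omega_x)$, which holds because $\Phi$ is Lipschitz and bounded on the bounded set $\overline{\Omega_x}$. The bracket $\int \nabla u\cdot\nabla(e^{2\phi}u)$ must also be computed rigorously. Two further points need care: the eikonal inequality for the Agmon distance with the degenerate metric $(q^2-\delta)_+$, which is obtained by a standard path argument, and the fact that $\Phi$ is constant ($=0$) on the connected set $F_\delta$.
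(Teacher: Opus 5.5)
Your proposal is correct and matches the paper's proof step for step: the same weight $(1-\delta)d_{\operatorname{agm},\delta}(x,F_\delta)$, the same energy identity for $e^{\xi\Phi}u_\xi$ (the paper writes $u_\xi=\theta_\xi e^{-\xi\psi}$), the same Hardy absorption of a negative singular term, and the same splitting into $F_\delta$ and its complement. The only difference is cosmetic and comes in the last step. You bound the coefficient below by $\xi^2\delta^2/2$ off $F_\delta$ and keep $\lambda_\xi$ on $F_\delta$, which gives the constant $2\lambda_\xi/(\xi^2\delta^2)=o(1)$; the paper instead first replaces $\lambda_\xi/\xi^2$ by $\delta$ and then uses $\min_{\omega_x}(q^2-\delta)>0$.
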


\begin{proof} Let $u_\xi  = \theta_\xi e^{-\xi \psi }$ for some sufficiently regular function $\psi $. We follow the proof of \cite[Proposition 4.1]{vanlaere2025grushinlike}, in higher dimensions. We first compute
\begin{align*}
    \Delta_x u_\xi   = \left( \Delta_x \theta_\xi   -2\xi  \nabla_x\theta_\xi  \cdot \nabla_x \psi  - \xi \theta_\xi \Delta_x \psi  + \xi^2 |\nabla_x \psi |^2 \right)e^{-\xi \psi }.
\end{align*}
Since $(G_\xi  - \lambda_\xi )u_\xi   = 0$, we obtain 
\begin{align}\label{eqn: nabla theta_n}
    -\Delta_x \theta_\xi   + 2\xi  \nabla_x\theta_\xi  \cdot \nabla_x \psi  + \left(\xi \Delta_x \psi  - \xi^2 |\nabla_x \psi |^2 + \xi^2 q(x)^2 + \frac{\nu^2 - \mathsf{H}}{|x|^2} - \lambda_\xi  \right)\theta_\xi  = 0.
\end{align}
An integration by parts shows that
\begin{align}\label{eqn: ipp proof agmon}
     \int_{\Omega_x} \left( \nabla_x\theta_\xi  \cdot \nabla_x \psi  \right) \theta_\xi  = - \frac{1}{2} \int_{\Omega_x} \theta_\xi ^2 \Delta_x \psi .
\end{align}

Therefore, multiplying \eqref{eqn: nabla theta_n} by $\theta_\xi $ and integrating by parts, using \eqref{eqn: ipp proof agmon}, we get
\begin{align*}
    \int_{\Omega_x} |\nabla_x \theta_\xi |^2 + \left( \xi^2 q(x)^2 + \frac{\nu^2 - \mathsf{H}}{|x|^2} - \xi^2 |\nabla_x \psi |^2 - \lambda_\xi   \right)\theta_\xi ^2 = 0.
\end{align*}

Dividing by $\xi^2 $, we obtain 
\begin{align}\label{eqn: proof agmon = 0 divided by xi^2}
    \frac{1}{\xi^2 }\int_{\Omega_x} |\nabla_x \theta_\xi |^2 + \int_{\Omega_x}\left( q(x)^2 + \frac{\nu^2 - \mathsf{H}}{\xi^2 |x|^2} - |\nabla_x \psi |^2 - \frac{\lambda_\xi }{\xi^2 }  \right)\theta_\xi ^2 = 0.
\end{align}

Let us deal with the singular term. If $\nu^2 - \mathsf{H} \geq 0$, then we directly obtain from \eqref{eqn: proof agmon = 0 divided by xi^2} that
\begin{align*}
    \int_{\Omega_x}\left( q(x)^2 - |\nabla_x \psi |^2 - \frac{\lambda_\xi }{\xi^2}  \right)\theta_\xi ^2 \leq 0.
\end{align*}
Now, if $\nu^2 - \mathsf{H} < 0$, by Hardy inequality \eqref{eqn: hardy inequality}, we have, since $\theta_\xi  \in H_0^1\left(\Omega_x \right)$,
    \begin{align}
        \int_{\Omega_x} \frac{\nu^2 - \mathsf{H}}{\xi^2 |x|^2}\theta_\xi ^2 \ dx \geq \frac{\nu^2 - \mathsf{H}}{\xi^2 \mathsf{H}} \int_{\Omega_x} |\nabla_x \theta_\xi|^2 \ dx. 
    \end{align}
Thus, from \eqref{eqn: proof agmon = 0 divided by xi^2}, we again directly obtain 
\begin{align*}
    \int_{\Omega_x}\left( q(x)^2 - |\nabla_x \psi |^2 - \frac{\lambda_\xi }{\xi^2 }  \right)\theta_\xi ^2 \leq 0.
\end{align*}

We can now conclude as in the proof of \cite[Proposition 4.1]{vanlaere2025grushinlike}.\\

Let $\delta_0 > 0$ sufficiently small such that, for every $\delta \in (0, \delta_0)$, the set $F_\delta$ introduced in \eqref{eqn: definition F delta} is a connected neighborhood of zero in $\Omega_x$, which is decreasing for the inclusion as $\delta \rightarrow 0^+$, and moreover satisfies $F_\delta \cap \overline{\omega_x} = \emptyset$ since $\overline{\omega_x} \cap \{0\} = \emptyset$. We choose such a $\delta \in (0,\delta_0)$. \\

As we assumed that $\lambda_\xi  = o(\xi^2 )$, there exists $\xi_0 > 0$, such that for every $\xi \geq \xi_0$, we have 
\begin{align*}
    \left\{ x \in \Omega_x, \quad q(x)^2 \leq \frac{\lambda_\xi}{\xi^2 } \right\} \subset F_\delta.
\end{align*}

Hence, for every $\xi > \xi_0$, we have
\begin{align}\label{eqn: negarive agmon delta}
    \int_{\Omega_x}\left( q(x)^2  - |\nabla_x \psi |^2 - \delta  \right)\theta_\xi ^2\leq 0.
\end{align}
Choose $\psi (x) := (1-\delta)d_{\operatorname{agm},\delta}(x,F_\delta)$, where $d_{\operatorname{agm}, \delta}$ is the degenerated distance \eqref{eqn: definition Agmon distance} induced by the degenerated Agmon metric $(q^2 - \delta)_+ dx^2$, where $dx^2$ is the standard Euclidean metric on $\Omega_x$. We note that we have 
\begin{align}
    |\nabla \psi (x)|^2 \leq (1-\delta)^2(q^2 - \delta)_+.
\end{align}
Therefore, on the set $F_\delta$ we have that $q^2 - |\nabla \psi |^2 - \delta = q^2 - \delta \leq 0$, and on the complementary of $F_\delta$ we have 
\begin{align*}
    q^2 - |\nabla \psi |^2 - \delta &\geq (1-(1-\delta)^2)(q^2 - \delta)_+ \\
    &= (2\delta - \delta^2)(q^2 - \delta)_+ \\
    &\geq 0.
\end{align*}
From \eqref{eqn: negarive agmon delta}, it follows that 
\begin{align*}
    (2\delta - \delta^2) \int_{\Omega_x \setminus F_\delta} (q^2 - \delta) \theta_\xi ^2 \leq - \int_{F_\delta} (q^2 - \delta) \theta_\xi ^2.
\end{align*}

Replacing $\theta_\xi $ by its expression $u_\xi  (x)e^{\xi(1-\delta)d_{\operatorname{agm},\delta}(x,F_\delta)}$, using the fact that $\overline{\omega_x} \cap F_\delta = \emptyset$, and that $d_{\operatorname{agm},\delta}(x,F_\delta) = 0$ on $F_\delta$, we get, on one hand
\begin{align*}
    - \int_{F_\delta}  (q^2 - \delta) \theta_\xi ^2 &=  - \int_{F_\delta}  (q^2 - \delta) u_\xi ^2  \\
    &\leq \sup_{F_\delta}|q^2 - \delta|,
\end{align*}
since $u_\xi  $ is normalized in $L^2$-norm, and on the other hand,
\begin{align*}
 (2\delta - \delta^2) \int_{\Omega_x \setminus F_\delta} (q^2 - \delta) \theta_\xi ^2  &=  (2\delta - \delta^2) \int_{\Omega_x \setminus F_\delta} (q^2 - \delta) u_\xi ^2   e^{2\xi (1-\delta)d_{\operatorname{agm},\delta}(x,F_\delta)} \\
 &\geq (2\delta - \delta^2) \int_{\omega_x} (q^2 - \delta) u_\xi ^2   e^{2\xi (1-\delta)d_{\operatorname{agm},\delta}(x,F_\delta)} \\
 &\geq (2\delta - \delta^2) \min_{\omega_x} (q^2 - \delta) e^{2\xi (1-\delta)d_{\operatorname{agm},\delta}(\omega_x,F_\delta)} \int_{\omega_x}  u_\xi ^2 ,
\end{align*}
where we stress that $\min_{\omega_x} (q^2 - \delta) > 0$. Therefore, we finally get
\begin{align*}
   \int_{\omega_x}  u_\xi ^2  \leq \frac{\sup_{F_\delta}|q^2 - \delta|}{(2\delta - \delta^2) \min_{\omega_x} (q^2 - \delta)} e^{-2\xi (1-\delta)d_{\operatorname{agm},\delta}(\omega_x,F_\delta)},
\end{align*}
which proves the proposition. 
\end{proof}

\subsection{Analysis of the classical operator in the case $\gamma = 1$ in one dimension}\label{section: spectral analysis classical gamma = 1}

Let us assume that $d_x = 1$. Thus, $\Omega_x = (0,L)$, and $\mathsf{H} = 1/4$. The results of this subsection follow from \cite{romankummer2025}. We thus consider the operator 
\begin{align}
    G_\xi = -\partial_x^2 + \xi^2x^{2} + \frac{\nu^2 - 1/4}{x^2},
\end{align}
and we denote its eigenfunctions by $u_{\xi,k}$, not necessarily normalized in norm, associated to the eigenvalues $\lambda_{\xi,k}$.\\

The Kummer equation is defined by, for $a,b,z \in \mathbb{C}$,
\begin{align}\label{eqn: Kummer equation}
    z w''(z) + \left( b - z \right) w'(z) - a w(z) = 0.
\end{align}
The Kummer function is a solution of \eqref{eqn: Kummer equation}, defined by, as long as $b$ is not a negative integer,
\begin{align}\label{eqn: definition kummer function}
    M (a,b,z) &= \sum_{k \geq 0} \frac{(a)_k}{(b)_k k!}z^k, 
\end{align}
where $(\cdot)_k = \cdot(\cdot+1)...(\cdot+k-1)$, $(\cdot)_0 = 1$, is the Pochhammer symbol. \\

We showed in \cite[Section 2]{romankummer2025}, that the eigenfunctions of $G_\xi$ are expressed in terms of the above Kummer functions. In \cite{romankummer2025}, we made the analysis on $(0,1)$, but by dilatation of the operator, it is easily translatable on $(0,L)$. 

\begin{proposition}{\cite[Theorem 1.1]{romankummer2025}}\label{prop: exact form eigenfunctions singular + cond eigenv}
    The (non-normalized in norm) eigenfunctions $u_{\xi,k}$ of $G_\xi$, associated to $\lambda_{\xi,k}$, $k \geq 0$, are  
    \begin{align}
    u_{\xi,k}(x) = A e^{- \xi x^2/2}x^{\frac{1}{2} + \nu}M \left(-\frac{\lambda_{\xi,k} - 2\xi (1 + \nu)}{4\xi}, 1 + \nu, \xi x^2 \right), \quad A \in \mathbb{R},
    \end{align}
where $\lambda_{\xi,k} \geq 0$ is an eigenvalue if and only if
\begin{align}\label{eqn : condition eigenvalue zero kummer}
    M \left(-\frac{\lambda_{\xi,k} - 2\xi (1 + \nu)}{4\xi},1 + \nu, \xi L^2 \right) = 0.
\end{align}
\end{proposition}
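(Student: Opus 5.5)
We reduce the eigenvalue problem for $G_\xi$ on $(0,L)$ to the Kummer equation \eqref{eqn: Kummer equation} by an explicit change of unknown and variable, as in the computation behind Proposition \ref{prop: eig v and eigenf of Gn on R+}.

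\textbf{Step 1: reduction to Kummer's equation.} Let $u$ solve $-u'' + \xi^2x^2u + \frac{\nu^2-1/4}{x^2}u = \lambda u$ on $(0,L)$. Write
\begin{align*}
u(x) = e^{-\xi x^2/2}x^{1/2+\nu}w(z), \qquad z=\xi x^2 .
\end{align*}
Expanding $u''$ gives the following.
\begin{itemize}
\item The factor $x^{1/2+\nu}$ produces $(\nu^2-1/4)x^{-2}$, which cancels the singular potential.
\item The Gaussian factor produces $\xi^2x^2$, which cancels the harmonic term.
\item The cross terms produce $-2\xi(1+\nu)$.
\end{itemize}
What remains is an equation for $w$ in $x$. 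Passing to $z=\xi x^2$ (so $\frac{d}{dx}=2\xi x\frac{d}{dz}$), it becomes $4\xi\big(zw''+(1+\nu-z)w'\big)-(2\xi(1+\nu)-\lambda)w=0$. This is \eqref{eqn: Kummer equation} with $b=1+\nu$ and $a=-\frac{\lambda-2\xi(1+\nu)}{4\xi}$. The only real work is this routine computation.

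\textbf{Step 2: selecting the solution compatible with the domain.} Since $b=1+\nu$ is not a nonpositive integer, a basis of solutions near $z=0$ is:
\begin{itemize}
\item $M(a,b,z)$, which is analytic and equals $1$ at $z=0$;
\item a second solution behaving like $z^{1-b}=z^{-\nu}$. When $\nu$ is an integer, take instead Tricomi's $U$, which behaves like $z^{-\nu}$, or like $\log z$ when $\nu=0$.
\end{itemize}
Correspondingly, $u$ is a combination of $x^{1/2+\nu}(1+O(x^2))$ and $x^{1/2-\nu}(1+o(1))$. Eigenfunctions lie in $D(G_\xi)\subset H_0^1(0,L)$ for $\nu>0$. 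Moreover, the Friedrichs extension picks the principal (small) solution at $0$. The function $x^{1/2-\nu}$ fails this: its derivative is not in $L^2$ near $0$ if $\nu\ge1$, and for $0<\nu<1$ it is not the principal solution, hence not in the form domain. This is the step I expect to be the main obstacle. For $\nu\in(0,1)$, both solutions are in $L^2$ and $x^{1/2-\nu}$ may even be in $H^1$ for small $\nu$. One then has to argue carefully that the Friedrichs domain excludes it. Two routes:
\begin{itemize}
\item the quadratic form: the weighted Hardy identity shows $\int |u'|^2+\frac{\nu^2-1/4}{x^2}u^2$ forces $u/x^{1/2}\to0$ suitably;
\item the standard characterization of the Friedrichs extension of Bessel-type operators via principal solutions, as in \cite{romankummer2025}.
\end{itemize}
Hence $u = A e^{-\xi x^2/2}x^{1/2+\nu}M(a,1+\nu,\xi x^2)$.

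\textbf{Step 3: boundary condition at $x=L$.} The Dirichlet condition $u(L)=0$ holds with $A\neq0$ exactly when $M(a,1+\nu,\xi L^2)=0$, because the prefactor $e^{-\xi L^2/2}L^{1/2+\nu}$ does not vanish. This gives \eqref{eqn : condition eigenvalue zero kummer}.

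Conversely, suppose $\lambda$ satisfies \eqref{eqn : condition eigenvalue zero kummer}. Then the function above is smooth on $(0,L]$, vanishes at $L$, behaves like $x^{1/2+\nu}$ at $0$, and lies in $D(G_\xi)$. So $\lambda$ is an eigenvalue. Eigenvalues are simple by Sturm–Liouville theory, so indexing the roots increasingly yields all $\lambda_{\xi,k}$ with the stated eigenfunctions.
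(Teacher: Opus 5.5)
Your route is the one the paper relies on via \cite{romankummer2025}: conjugate by $e^{-\xi x^2/2}x^{1/2+\nu}$ to reach Kummer's equation with $b=1+\nu$ and the stated $a$, keep the solution admissible at $0$, and impose $u(L)=0$. Your Step 1 computation is correct. The paper justifies the selection exactly by noting that the Kummer solution is the only one that is $H^1$ near zero.

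Step 2, however, rests on a miscalculation and creates an obstacle that does not exist. Since $|(x^{1/2-\nu})'|^2=(\nu-\tfrac12)^2x^{-1-2\nu}$, the non-principal solution has a non-square-integrable derivative at $0$ for \emph{every} $\nu>0$ with $\nu\neq\frac12$. You used the $L^2$ threshold $\nu<1$ of the function itself instead of the $H^1$ threshold, so the claim that $x^{1/2-\nu}$ may lie in $H^1$ for small $\nu$ is false. A uniform way to see the exclusion, including $\nu=\frac12$, is the following. Every $u\in H_0^1(0,L)$ satisfies $|u(x)|\le\sqrt{x}\,\|u'\|_{L^2(0,x)}=o(\sqrt{x})$. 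Any combination containing the second solution is of exact order $x^{1/2-\nu}$, and $x^{1/2-\nu}/\sqrt{x}=x^{-\nu}\to\infty$. Together with the inclusion $D(G_\xi)\subset H_0^1(0,L)$ that you already stated (Hardy, $\nu>0$), this settles the selection in one line. No limit-circle or principal-solution characterization of the Friedrichs extension is needed; your first route, made precise, is exactly this argument.

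A smaller point: Tricomi's $U(a,b,\cdot)$ is proportional to $M(a,b,\cdot)$ when $a\in-\mathbb{N}_0$, i.e. when $\lambda=\xi(4k+2(1+\nu))$, so it is not always a second independent solution. Use Frobenius at the regular singular point $z=0$ instead. Its indicial roots $0$ and $-\nu$ yield a second solution $\sim z^{-\nu}$ in all cases, with a subdominant logarithmic term when $\nu\in\mathbb{N}$. With these corrections, Step 3 and the converse are fine as written.
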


The Kummer function \eqref{eqn: definition kummer function}, from which the eigenfunctions are described, is the only sufficiently regular solution of the Kummer equation \eqref{eqn: Kummer equation} at $x = 0$ to belong to $D(G_\xi)$. The above Proposition \ref{prop: exact form eigenfunctions singular + cond eigenv} will be of use in Section \ref{section: spectral analysis generalized}, where we treat the generalized operator. \\

The transformation $z = x/L$ sends $G_\xi$ to the operator 
\begin{align}\label{eqn: classical grushin operator dilated}
    -\partial_z^2 + \xi^2L^4z^2 + \frac{\nu^2 - 1/4}{z^2}
\end{align}
on $L^2(0,1)$, for which the eigenvalues are given by $L^2\lambda_{\xi,k}$. We obtained a precise description of the eigenvalues of \eqref{eqn: classical grushin operator dilated} in \cite{romankummer2025}, for any $k \leq \lfloor \tau \xi L^2/4\rfloor$, where $\tau \in (0,1)$ is fixed, and uniformly with respect to $\xi > 0$ sufficiently large. Namely, from \cite[Theorems 1.3 and 1.4]{romankummer2025}, we derive the following. 

\begin{proposition}\label{prop: localization eigenvalues classical gamma = 1 d=1} 
    Let $\nu > 0$. For every $\tau \in (0,1)$, there exists $\xi_0 > 0$, $C_1,C_2 > 0$, such that for every $\xi \geq \xi_0$, for every $k \leq \left\lfloor\frac{\tau \xi L^2 }{4} \right\rfloor$,  
        \begin{align}
           4k + 2(1+\nu) < \frac{\lambda_{\xi,k}}{\xi} \leq 4k + 2(1+\nu) + C_1e^{-C_2 \xi}.
        \end{align}
\end{proposition}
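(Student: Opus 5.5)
The plan is to compare $G_\xi$ on $(0,L)$ with the half-line operator $\mathcal{G}_\xi$ of Proposition \ref{prop: eigv and eigenf of Gn on R+}. The explicit Kummer description of Proposition \ref{prop: exact form eigenfunctions singular + cond eigenv} will be used to localize the zeros of the eigenvalue condition \eqref{eqn : condition eigenvalue zero kummer}. By the dilation $z = x/L$, I may assume $L=1$ and replace $\xi$ by $\xi L^2$, so the cutoff becomes $k \le \tau\xi/4$. Write $a = -(\lambda - 2\xi(1+\nu))/(4\xi)$. The eigenvalue condition then reads $M(a,1+\nu,\xi)=0$, and the half-line eigenvalues correspond exactly to $a=-k$, where $M(-k,1+\nu,\cdot)$ is a multiple of the Laguerre polynomial $L_k^{(\nu)}$.

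\textbf{Lower bound.} The strict inequality $\lambda_{\xi,k} > \xi\mu_k$ follows from min-max. Extending by zero embeds $H_0^1(0,1)$ into the form domain of $\mathcal{G}_\xi$, with identical quadratic forms, so $\lambda_{\xi,k} \ge \xi\mu_k$. Equality would force an eigenfunction of $G_\xi$, extended by zero, to be a Dirichlet eigenfunction of $\mathcal{G}_\xi$ vanishing on $(1,\infty)$. That is impossible by unique continuation for the ODE, since the $\Phi_{\xi,k}$ do not vanish identically on $(1,\infty)$. Alternatively, one can say that the Dirichlet eigenvalues increase strictly as the domain shrinks, by Sturm comparison.

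\textbf{Upper bound.} I will use quasimodes. Take $\chi$ a cutoff equal to $1$ on $[0,\sqrt{\tau'}]$ and vanishing near $1$, with $\tau<\tau'<1$, and set $v_j = \chi\Phi_{\xi,j}$ for $j\le k$. The Laguerre function $e^{-\xi x^2/2}x^{1/2+\nu}L_j^{(\nu)}(\xi x^2)$ is concentrated in the classically allowed region $\xi x^2 \lesssim 4j+2+2\nu \le \tau\xi + O(1)$, i.e.\ $x \lesssim \sqrt{\tau}$. An Agmon-type bound, in the spirit of Proposition \ref{prop: agmon decay grushin general} but now for the explicit functions with $\lambda/\xi^2 \le \tau$, gives $\|\Phi_{\xi,j}\|_{H^1(x>\sqrt{\tau'})} \le Ce^{-c\xi}$ uniformly in $j\le \tau\xi/4$. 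Consequently $\|(G_\xi - \xi\mu_j)v_j\| \le Ce^{-c\xi}$, and the $v_j$ are almost orthonormal. Applying min-max on $\mathrm{span}\{v_0,\dots,v_k\}$ yields $\lambda_{\xi,k} \le \xi\mu_k + Ce^{-c\xi}$; after dividing by $\xi$ this gives the claimed bound with constants $C_1, C_2$.

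\textbf{Main obstacle.} The hard step is making the exponential decay uniform in $j$ up to $\tau\xi/4$. The turning point $\sqrt{(4j+2+2\nu)/\xi}$ approaches $\sqrt{\tau}$, and the decay rate $\int_{x_j}^{\sqrt{\tau'}}(\xi^2 s^2 - \xi\mu_j)^{1/2}\,ds$ must be bounded below by $c\xi$. It is, because $\xi\mu_j/\xi^2 \le \tau + O(1/\xi) < \tau'$, so the integrand is at least $\xi\sqrt{s^2-\tau}$ for $s$ beyond the turning point. I would first try to obtain this from the weighted energy identity used in the proof of Proposition \ref{prop: agmon decay grushin general}, applied on $(0,\infty)$ with weight $\psi(x) = (1-\delta)\int_{\sqrt{\tau''}}^{x}(s^2-\tau'')_+^{1/2}\,ds$ and with the $L^2$ normalization of $\Phi_{\xi,j}$ given by Proposition \ref{prop: eigv and eigenf of Gn on R+}. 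The derivative terms can then be controlled through the equation.
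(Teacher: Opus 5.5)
Your proof is correct, but it follows a different route from the paper. The paper gives no proof in the text. It imports the statement from \cite[Theorems 1.3 and 1.4]{romankummer2025}, which works directly with the exact eigenvalue condition \eqref{eqn : condition eigenvalue zero kummer}. There, the zeros of $a\mapsto M(a,1+\nu,\xi L^2)$ are localised near the negative integers $a=-k$, non-asymptotically and uniformly for $k\le \tau\xi L^2/4$. In that variable the proposition says exactly that the relevant zero lies in $[-k-\tfrac{C_1}{4}e^{-C_2\xi},-k)$.

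You announce this Kummer reformulation but never use it. Your argument is purely variational:
\begin{itemize}
\item for the strict lower bound, min-max domain monotonicity against $\mathcal{G}_\xi$;
\item for the upper bound, harmonic-approximation quasimodes $\chi\Phi_{\xi,j}$, $j\le k$, controlled by an Agmon estimate.
\end{itemize}
Your route needs only the half-line spectrum of Proposition~\ref{prop: eigv and eigenf of Gn on R+} and the Agmon identity of Section~\ref{section : agmon estimates}. It gives an explicit, though non-optimal, rate $C_2$ in terms of an Agmon distance, and it survives perturbations of the potential. The special-function route instead yields information on the zeros of Kummer functions themselves, which is of independent interest, and meshes with the explicit eigenfunction formula used later. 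For this paper the two are interchangeable. All later uses need only the two-sided bound itself, e.g.\ $a_{\xi,0}\in(-1/2,0)$ in Lemma~\ref{lemma: monotony kummer function} and the dissipation rate in Section~\ref{section: proof positive result}.

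A few details to tighten.

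\textbf{Strictness for $k\ge1$.} Equality $\lambda_{\xi,k}=\xi\mu_k$ does not literally force an eigenfunction of $G_\xi$ to be one of $\mathcal{G}_\xi$. Argue instead as follows. The zero-extended span of the first $k+1$ eigenfunctions of $G_\xi$ meets $\{\Phi_{\xi,0},\dots,\Phi_{\xi,k-1}\}^\perp$ nontrivially. Any $w$ in this intersection has Rayleigh quotient exactly $\xi\mu_k$, so it is a multiple of $\Phi_{\xi,k}$, which cannot vanish on $(1,\infty)$. Alternatively, invoke strict Sturm comparison, as you suggest.

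\textbf{Number of quasimodes.} You use $k+1\sim\tau\xi/4$ quasimodes, so the Gram-matrix and cross-term errors carry polynomial factors in $\xi$. These are harmless against $e^{-c\xi}$, but they must be counted. A clean way is the IMS identity $Q(\chi\Phi,\chi\Phi)=\langle\mathcal{G}_\xi\Phi,\chi^2\Phi\rangle+\|\chi'\Phi\|^2$ (as in Lemma~\ref{energy cutoff lemma}) with $\Phi=\sum_{j\le k}c_j\Phi_{\xi,j}$. It requires only $L^2$ smallness of each $\Phi_{\xi,j}$ on $\{x>\sqrt{\tau'}\}$, not $H^1$ smallness.

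\textbf{Uniformity in $j$.} The step you flag as the main obstacle is immediate. The energy identity gives $\int_0^\infty (x^2-\mu_j/\xi-|\psi'|^2)\theta^2\,dx\le 0$. Moreover $\mu_j/\xi\le\tau+(2+2\nu)/\xi\le\tau''$ for all $j\le\tau\xi/4$ once $\xi$ is large. So with your weight every constant is independent of $j$.
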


\subsection{Analysis of the classical operator in the case $\gamma > 1$ in one dimension}\label{section: spectral analysis classical gamma > 1}

Let us still assume that $d_x = 1$. Thus, $\Omega_x = (0,L)$, and $\mathsf{H} = 1/4$. Here, we consider the operator 
\begin{align}
    G_\xi = -\partial_x^2 + \xi^2 x^{2\gamma} + \frac{\nu^2 - 1/4}{x^2},
\end{align}
and we denote its eigenvalues by $\lambda_{\xi,k}$.\\

The case $\gamma = 1$ is also covered in this subsection but in a less precise way than Proposition \ref{prop: localization eigenvalues classical gamma = 1 d=1}. We simply obtain a rough estimate on the first eigenvalue of our operators in Fourier for $\xi > 0$ large. This is sufficient for the proof of Theorem \ref{thm: main theorem control classical grushin rectangle vertical 2}. \\

Let us introduce some notations for the quadratic forms associated to our operators. Set 
\begin{align}
\mathcal{Q}_{\xi,L}(u,v)  = \displaystyle\int_0^L u'(x)v'(x) + \xi^2 x^{2\gamma}u(x)v(x) + \frac{\nu^2  - 1/4}{x^2}u(x)v(x)\ dx,
\end{align}
where $\mathcal{Q}_{\xi,L}$ is defined, for any $\nu > 0$, on $H_0^1(0,L)^2$. Observe that $\mathcal{Q}_{\xi,L}$ is the quadratic form associated with $G_\xi$ in $L^2(0,L)$, and that $\mathcal{Q}_{1,\infty}$ is the one associated with $\mathcal{G}_\gamma :=-\partial_x^2 + x^{2\gamma} + (\nu^2 - 1/4)/x^2$ on $L^2(0,+\infty)$, analogously defined as $\mathcal{G}$ introduced in Section \ref{section: heat kernel computation}. Moreover, by Hardy inequality \eqref{eqn: hardy inequality}, since $\nu > 0$, recall that they are positive-definite for any fixed $\nu > 0$.\\

Let us denote the first eigenvalue of $\mathcal{G}_\gamma$ by $\overline{\mu}$. Recall that by the min-max formula, we have
    \begin{align}
    \begin{array}{ccl}
        \lambda_{\xi,0} &=& \min_{u \in H_0^1(0,L), \|u\|=1} \mathcal{Q}_{\xi,L}(u,u), \\[6pt]
        \overline{\mu} &=& \inf_{u \in H_0^1(0,+\infty), \|u\|=1} \mathcal{Q}_{1,\infty}(u,u).
    \end{array}        
    \end{align}

\begin{lemma}{\cite[Lemma 3.5]{prandi2018quantum}}\label{energy cutoff lemma}
    Let $\chi$ be a real-valued Lipschitz function with compact support in $(0,+\infty)$. Let $u\in H^1(0,+\infty)$. Then, we have 
    \begin{align}
        \mathcal{Q}_{1,\infty} (\chi u,\chi u) = \mathcal{Q}_{1,\infty} (u,\chi^2 u) + \langle u , |\chi' |^2 u \rangle.
    \end{align}
\end{lemma}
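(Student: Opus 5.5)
The identity to prove is an IMS-type localization formula: $\mathcal{Q}_{1,\infty}(\chi u,\chi u) = \mathcal{Q}_{1,\infty}(u,\chi^2 u) + \langle u, |\chi'|^2 u\rangle$. The plan is to expand both sides term by term. Only the kinetic part $\int |(\cdot)'|^2$ produces a discrepancy, because the potential part is multiplicative.

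\textbf{Well-definedness.} Since $\chi$ is Lipschitz with compact support $K\subset[a,b]\subset(0,+\infty)$, the functions $\chi u$ and $\chi^2 u$ belong to $H^1$ and vanish outside $K$. On $K$ the potential $x^{2\gamma}+(\nu^2-1/4)/x^2$ is bounded, so every integral below is finite. In particular, no Hardy inequality is needed, and the singularity at $0$ plays no role.

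\textbf{Potential terms.} Write $V(x)=x^{2\gamma}+(\nu^2-1/4)/x^2$. Then
\begin{align*}
\int V (\chi u)^2 = \int V u\,(\chi^2 u),
\end{align*}
so these contributions coincide on both sides.

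\textbf{Kinetic terms.} Almost everywhere,
\begin{align*}
(\chi u)' = \chi' u + \chi u', \qquad (\chi^2 u)' = 2\chi\chi' u + \chi^2 u'.
\end{align*}
Hence
\begin{align*}
\int |(\chi u)'|^2 = \int \chi'^2 u^2 + 2\int \chi\chi' u u' + \int \chi^2 u'^2,
\end{align*}
while
\begin{align*}
\int u'(\chi^2 u)' = 2\int \chi\chi' u u' + \int \chi^2 u'^2 .
\end{align*}
Subtracting the second from the first leaves exactly $\int |\chi'|^2 u^2 = \langle u, |\chi'|^2 u\rangle$.

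No integration by parts is needed, so boundary terms never arise. The only point requiring care is justifying the product rule for a Lipschitz $\chi$ times $u\in H^1$. This holds because $\chi\in W^{1,\infty}$; alternatively, one can approximate $u$ by smooth functions and pass to the limit, since both sides are continuous in the $H^1(K)$ norm. If $u$ is complex-valued, the same computation with real parts gives the result.
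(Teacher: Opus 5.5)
Your proof is correct. The paper gives no argument of its own and only quotes \cite[Lemma 3.5]{prandi2018quantum}, so your proposal is a self-contained verification rather than a different route. Expanding the integrands almost everywhere is the standard check of this IMS-type localization identity: the potential parts coincide, and the kinetic parts differ exactly by $|\chi'|^2u^2$. Because both sides are written through the form, no integration by parts is needed.

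Two small remarks. In a complex, sesquilinear setting the identity holds only with $\operatorname{Re}\,\mathcal{Q}_{1,\infty}(u,\chi^2 u)$, since the cross term $2\chi\chi' u'\bar u$ need not be real. This is irrelevant in the real-valued setting of the paper.

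More usefully, your pointwise computation also justifies how the lemma is actually used in the proof of Proposition \ref{prop: eigenvalue classical gamma geq 1}. There $\chi_{\xi,1}\equiv 1$ near $0$ and $\chi_{\xi,2}\equiv 1$ near $+\infty$, so neither cutoff is compactly supported in $(0,+\infty)$ and the lemma's hypotheses are not literally met. However, $v\in H_0^1(0,+\infty)\cap L^2((0,+\infty),s^{2\gamma}ds)$. Hardy's inequality together with this weighted integrability keeps every integral absolutely convergent, which is all your argument needs.
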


We now follow \cite[Proposition 3.3]{vanlaere2025grushinlike}.

\begin{proposition}\label{prop: eigenvalue classical gamma geq 1}
For every $\xi > 0$, we have
    \begin{align}\label{lower bound mu_n}
         \lambda_{\xi,0} \geq \overline{\mu}\xi^{\frac{2}{1+\gamma}}.
    \end{align}
Moreover, for every $\epsilon > 0$ sufficiently small, there exists $\xi_0 > 0$, such that for every $\xi \geq \xi_0$, we have
\begin{align}\label{upper bound mu_n}
    \lambda_{\xi,0} \leq (1 + \epsilon)(\overline{\mu} + \epsilon)\xi^{\frac{2}{1+\gamma}}.
\end{align}
\end{proposition}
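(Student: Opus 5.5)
Both bounds come from the scaling $x = \xi^{-\frac{1}{1+\gamma}} s$, which maps $\mathcal{Q}_{\xi,L}$ onto $\xi^{\frac{2}{1+\gamma}}\mathcal{Q}_{1,\xi^{1/(1+\gamma)}L}$. Indeed, for $u \in H_0^1(0,L)$ set $v(s) = \xi^{-\frac{1}{2(1+\gamma)}} u(\xi^{-\frac{1}{1+\gamma}} s)$. This preserves the $L^2$ norm, and $v \in H_0^1(0,L_\xi)$ with $L_\xi := \xi^{\frac{1}{1+\gamma}}L$. Each of the three terms of the quadratic form scales by the same factor $\xi^{\frac{2}{1+\gamma}}$:
\begin{itemize}
\item the kinetic term $|u'|^2$ gives this factor directly;
\item the potential term $\xi^2 x^{2\gamma}$ gives $\xi^2\xi^{-\frac{2\gamma}{1+\gamma}} = \xi^{\frac{2}{1+\gamma}}$;
\item the singular term $(\nu^2-1/4)/x^2$ is homogeneous of degree $-2$, like the Laplacian, so it scales identically.
\end{itemize}
Hence
\begin{align*}
\lambda_{\xi,0} = \xi^{\frac{2}{1+\gamma}} \min_{v \in H_0^1(0,L_\xi),\ \|v\|=1} \mathcal{Q}_{1,\infty}(v,v).
\end{align*}

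\textbf{Lower bound.} Extending $v$ by zero embeds $H_0^1(0,L_\xi)$ into $H_0^1(0,+\infty)$ without changing the form. The constrained minimum is therefore at least $\overline{\mu}$, which gives \eqref{lower bound mu_n} immediately.

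\textbf{Upper bound.} Fix $\epsilon>0$. I use a cutoff argument based on Lemma \ref{energy cutoff lemma}.
\begin{enumerate}
\item By definition of the infimum, pick $w \in H_0^1(0,+\infty)$ with $\|w\|=1$ and $\mathcal{Q}_{1,\infty}(w,w) \leq \overline{\mu}+\epsilon/2$. Taking $w$ to be the actual ground state of $\mathcal{G}_\gamma$ is cleanest, since then $\mathcal{Q}_{1,\infty}(w,\chi^2 w) = \overline{\mu}\langle w,\chi^2 w\rangle$.
\item Let $\chi_R(s) = \chi(s/R)$, where $\chi$ is Lipschitz, equal to $1$ on $[0,1]$ and to $0$ on $[2,\infty)$. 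The lemma asks for compact support in $(0,+\infty)$; near $0$ I would handle this either by a density argument or by also cutting off at scale $1/R$ near the origin. The resulting extra term $\|\chi_{1/R}'w\|^2$ is controlled because the ground state behaves like $s^{1/2+\nu}$ near $0$, so it is $O(R^{-2\nu-1}\cdot R^{2})$ over an interval of length $1/R$, hence $O(R^{-2\nu})$, which tends to $0$ as $\nu>0$.
\item Lemma \ref{energy cutoff lemma} then gives
\begin{align*}
\mathcal{Q}_{1,\infty}(\chi_R w,\chi_R w) = \overline{\mu}\|\chi_R w\|^2 + O(R^{-2}) + o(1),
\end{align*}
with $\|\chi_R w\| \to 1$ as $R\to\infty$.
\item Choose $R$ so that the Rayleigh quotient of $\chi_R w$ is at most $(1+\epsilon)(\overline{\mu}+\epsilon)$. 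Then take $\xi_0$ with $2R \leq \xi_0^{\frac{1}{1+\gamma}}L$, so that $\chi_R w \in H_0^1(0,L_\xi)$ for every $\xi\geq\xi_0$.
\end{enumerate}
Using $\chi_R w$ as a test function in the scaled min-max formula yields \eqref{upper bound mu_n}.

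The main obstacle is the behaviour near the singular endpoint $0$, in two places. First, Lemma \ref{energy cutoff lemma} requires compact support away from $0$. Second, the test function must be admissible, which relies on the Hardy-based identification of the form domain with $H_0^1$ for $\nu>0$. Both difficulties are handled by the near-origin decay $w = O(s^{1/2+\nu})$ of the ground state, or more simply by density of $C_c^\infty(0,\infty)$ in the form domain: one can pick $w$ already compactly supported in $(0,\infty)$ with energy within $\epsilon$ of $\overline{\mu}$, which makes the cutoff step trivial.
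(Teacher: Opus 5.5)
Your proof is correct and follows essentially the same route as the paper. The dilation $s=\xi^{1/(1+\gamma)}x$ turns the lower bound into a comparison with the half-line infimum $\overline{\mu}$. The upper bound comes from inserting a truncated half-line ground state into the dilated min-max via Lemma \ref{energy cutoff lemma}. The only difference is bookkeeping: the paper uses a $\xi$-dependent partition of unity $\chi_{\xi,1}^2+\chi_{\xi,2}^2=1$ at scale $\xi^{1/(1+\gamma)}L$ and drops the nonnegative outer piece, whereas you fix the scale $R$ and use the eigen-equation $\mathcal{Q}_{1,\infty}(w,\chi_R^2w)=\overline{\mu}\|\chi_Rw\|^2$, which works equally well.

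Your concern about the origin is unnecessary. The localization identity holds for any bounded Lipschitz $\chi$, since the potential and singular terms commute with multiplication, and the paper's own cutoff $\chi_{\xi,1}$ equals $1$ near $0$. Your density shortcut is also valid and simpler: a compactly supported near-minimizer fits into $(0,\xi^{1/(1+\gamma)}L)$ for $\xi$ large, so no cutoff is needed at all.
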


\begin{proof} We start with the lower bound.
Let $\tau_\xi := \xi^{1/(1 + \gamma)}$. Making the change of variable $s = \tau_\xi x$, and setting $v(s) = u(\tau_{\xi}^{-1}y)\tau_{\xi}^{-1/2}$, we obtain
\begin{align}\label{eqn: minmax classical dilated}
    \lambda_{\xi,0} =  \tau_{\xi}^2 \min \{ \mathcal{Q}_{1,\tau_\xi L}(v,v), v \in H_0^1(0, \tau_\xi L), \|v\| = 1\}.
\end{align}

Thus, we have
 \begin{align}
     \lambda_{\xi,0} \geq \tau_{\xi}^2 \overline{\mu}.
 \end{align}

Now we treat the upper bound. Let $0 < \delta < 1$. From \cite[Lemma 3.2]{vanlaere2025grushinlike}, there exist a constant $C > 0$ independent of $\delta$ and $\xi$, and two smooth functions $\chi_{\xi,1}$, $\chi_{\xi,2}$ such that
\begin{itemize}
    \item $0 \leq \chi_{\xi,i}(x) \leq 1$, for $i=1,2$, 
    \item $\chi_{\xi,1}(x) = 1$ on $[0,\delta L\tau_\xi]$ and $\chi_{\xi,1}(x) = 0$ on $(\tau_\xi L, +\infty)$,
    \item $\chi_{\xi,2}(x) = 1$ on $(\tau_\xi L, +\infty)$ and $\chi_{\xi,2}(x) = 0$ on $[0,\delta L\tau_\xi]$,
    \item $\chi_{\xi,1}(x)^2$ + $\chi_{\xi,2}(x)^2 = 1$, for every $x> 0$ and $\xi > 0$, 
    \item $\underset{x \in (0,+\infty)}{\sup} |\chi'_{\xi,i}| \leq \frac{C}{(1-\delta)\tau_\xi}$, for $i=1,2$. 
\end{itemize}
Let $v \in H_0^1(0,+\infty)\cap L^2((0,+\infty),|s|^{2\gamma}ds)$, $\|v\|_{L^2(0,+\infty)}=1 $, be the minimizer of $\mathcal{Q}_{1,\infty}(v,v)$. From Lemma \ref{energy cutoff lemma}, we derive that
\begin{align}\label{eqn: lower bound quadratic form Q_infty proof spectral}
    \mathcal{Q}_{1,\infty}(v,v) = \sum_{i=1}^2 \mathcal{Q}_{1,\infty}(\chi_{\xi,i}v,\chi_{\xi,i}v) - \sum_{i=1}^2 \int_\mathbb{R} |\chi'_{\xi,i}|^2|v|^2 \geq \mathcal{Q}_{1,\infty} (\chi_{\xi,1}v,\chi_{\xi,1}v) - c(\xi)\|v\|_{L^2(\mathbb{R})},
\end{align}
where $c(\xi) := 2\left( \frac{C}{(1-\delta)\tau_\xi} \right)^2$.\\

Henceforth, by the definition \eqref{eqn: minmax classical dilated} of $\lambda_{\xi,0}$ after dilatation, using \eqref{eqn: lower bound quadratic form Q_infty proof spectral}, and with the minimum taken on all non-zero elements, we have  
\begin{align*}
    \lambda_{\xi,0} &\leq \tau_{\xi}^2 \frac{\mathcal{Q}_{1,\tau_\xi L}(\chi_{\xi,1}v,\chi_{\xi,1}v)}{\|\chi_{\xi,1}v\|^2_{L^2(0,\tau_\xi L)}} \\
    &= \tau_{\xi}^2 \frac{\mathcal{Q}_{1,\infty} (\chi_{\xi,1}v,\chi_{\xi,1}v)}{\|\chi_{\xi,1}v\|^2_{L^2(0,+\infty)}} \\
    &\leq \frac{\tau_{\xi}^2}{\|\chi_{\xi,1}v\|^2_{L^2(0,+\infty)}} \left[ \mathcal{Q}_{1,\infty} (v,v) + c(\xi) \right], \quad \text{because $\|v\|_{L^2(\mathbb{R})} = 1$},\\
    &= \frac{\tau_{\xi}^2}{\|\chi_{\xi,1}v\|^2_{L^2(0,+\infty)}} \left[ \overline{\mu} + c(\xi) \right].
\end{align*}
This proves the upper bound since $\|\chi_{\xi,1}v\|_{L^2(0,+\infty)} \rightarrow 1$ and $c(\xi) \rightarrow 0$, as $\xi \rightarrow + \infty$. 
\end{proof}

\subsection{Analysis of the classical operator in higher dimensions}\label{section: spectral analysis classical d > 3}

Let us now assume that $d_x \geq 3$, and that we are under the setting of assumption \ref{assumption: d > 3}. That is,  $0 \in \Omega_x$ and $\mathsf{H} = (d_x - 2)^2/4$. We consider the operators 
\begin{align}\label{eqn: definition G_xi fourier high dimension q(x) = x gamma}
    G_\xi = -\Delta_x + \xi^2 |x|^{2\gamma} + \frac{\nu^2 - \mathsf{H}}{|x|^2},
\end{align}
and we denote its eigenvalues by $\lambda_{\xi,k}$.

\subsubsection{The case of a ball}

Let us first assume that $\Omega_x = B(0,L)$ for some $L > 0$, with $d_x \geq 3$. We want to estimate the first eigenvalue of $G_\xi$ in this case. The idea is to reduce the problem to a radial one on $(0,L)$, and recover the setting of the preceding subsections. In such case, we will have a Dirichlet boundary condition at $r = 0$ and $r = L$, where $r = |x|$. In other word, we recover the eigenvalues and eigenfunctions of the operator $G_\xi$ when $d_x = 1$, thus the analysis of Sections \ref{section: spectral analysis classical gamma = 1} and \ref{section: spectral analysis classical gamma > 1} above. We only sketch the proof and omit some details. \\

First, we write $x = r\theta$, $r \in (0,L)$ and $\theta \in \mathbb{S}^{d_x  -1}$. In these coordinates, the Laplacian writes 
\begin{align}
    -\Delta_x = - \partial_r^2 - \frac{d_x - 1}{r}\partial_r - \frac{1}{r^2}\Delta_{\mathbb{S}^{d_x - 1}},
\end{align}
that acts on $L^2((0,L) \times \mathbb{S}^{d_x - 1}, r^{d_x-1}drd\theta)$, where $d\theta$ is the surface measure on the sphere, and $\Delta_{\mathbb{S}^{d_x - 1}}$ is the Laplace-Beltrami operator on the sphere. Our operator thus becomes 
\begin{align}\label{eqn: definition G_xi fourier coordonnees spheriques}
    \begin{array}{ccl}
    D(G_\xi) &=& \{ f \in H^1 \left((0,L) \times \mathbb{S}^{d_x - 1}, r^{d_x - 1}drd\theta \right), \quad f(L,\theta) = 0 \text{ for } \theta \in \mathbb{S}^{d_x - 1}, \quad G_\xi f \in L^2(\mathbb{R})\},  \\[8pt]
    G_\xi &=& \displaystyle -\partial_r^2 - \frac{d_x - 1}{r}\partial_r - \frac{1}{r^2}\Delta_{\mathbb{S}^{d_x - 1}} + \xi^2 r^{2\gamma} + \frac{\nu^2 - \mathsf{H}}{r^2},
    \end{array}   
\end{align}
We keep the notation $G_\xi$ here for simplicity as no confusion shall arise. \\

We decompose any $f \in L^2((0,L) \times \mathbb{S}^{d_x - 1}, r^{d_x-1}drd\theta)$ in a basis of eigenfunctions of $-\Delta_{\mathbb{S}^{d_x - 1}}$. Recall (see \textit{e.g.} \cite[Chapter 3.C]{berger2006spectre}) that its eigenvalues are given by $\ell(d_x + \ell - 2)$, $\ell \geq 0$, and the corresponding eigenspaces $V_\ell$ are of dimension $m_\ell$, spanned by the eigenfunctions $Y_{\ell,m}$, $1 \leq m \leq m_\ell$,
\begin{align*}
    -\Delta_{\mathbb{S}^{d_x - 1}}Y_{\ell,m} = \ell(d_x + \ell - 2)Y_{\ell,m}, \quad \text{for every } \ell \geq 0 \quad \text{and} \quad 1 \leq m \leq m_\ell.
\end{align*}
We write
\begin{align}
    f(r,\theta) = \sum_{\ell \geq 0} \sum_{m = 1}^{m_\ell} F_{\ell,m}(r)Y_{\ell,m}(\theta).
\end{align}

The first eigenvalue of $G_\xi$ is obtained by considering $\ell = 0$. That is, we solve 
\begin{align}
    \left(- \partial_r^2 - \frac{d_x - 1}{r}\partial_r + \xi^2r^{2\gamma} + \frac{\nu^2 - \mathsf{H}}{r^2} \right)F = \lambda F,
\end{align}
in $L^2((0,L), r^{d_x - 1}dr)$. Making the change of variable $F = r^{-(d_x-1)/2}\mathsf{F}$, $\mathsf{F}$ solves
\begin{align}
    -\mathsf{F}'' + \frac{d_x^2 - 4d_x + 3}{4r^2}\mathsf{F} + \xi^2r^{2\gamma} \mathsf{F} + \frac{\nu^2 - \mathsf{H}}{r^2}\mathsf{F} = 0,
\end{align}
in $L^2(0,L)$, with a Dirichlet boundary conditions at $r = 0$ and $r=L$, and $\mathsf{F}$ has at least the same regularity as $F$ since $d_x \geq 3$. In particular, we must have $\mathsf{F} \in H_0^1(0,L)$.\\

Developing the Hardy constant $\mathsf{H} = (d_x - 2)^2/4$, we obtain
\begin{align}\label{eqn: eigenproblem radial}
    -\mathsf{F}'' + \xi^2r^{2\gamma} \mathsf{F} + \frac{\nu^2 -1/4}{r^2}\mathsf{F}  = \lambda \mathsf{F},
\end{align}
in $L^2(0,L)$. It follows that the eigenfunctions of the above problem \eqref{eqn: eigenproblem radial} are exactly those of the case $d_x = 1$ described in the Section \ref{section: spectral analysis classical gamma = 1}, and thus the eigenvalues coincide. Indeed, the eigenfunctions given in Proposition \ref{prop: exact form eigenfunctions singular + cond eigenv} are the only solution of the eigenproblem that are $H^1$ near zero (see \cite[Section 2]{romankummer2025}). We therefore have the following. 

\begin{proposition}\label{prop: first eigenvalue in the ball}
    Assume that $d_x \geq 3$, and $\Omega_x = B(0,L)$ for some $L > 0$. Then, the first eigenvalue of $G_\xi$ satisfies, as $\xi \rightarrow + \infty$,
    \begin{align}
        \lambda_{\xi,0} \sim 2\xi(1+\nu), \quad \text{if } \gamma = 1,
    \end{align}
    and there exists two constants $c,C > 0$ such that 
    \begin{align}
        c \xi^{2/(\gamma + 1)} \leq \lambda_{\xi,0} \leq C \xi^{2/(\gamma + 1)}, \quad \text{if } \gamma > 1.
    \end{align}
\end{proposition}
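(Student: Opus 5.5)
The plan is to reduce the first eigenvalue of $G_\xi$ on the ball to a one-dimensional radial problem, and then to invoke the one-dimensional results: Proposition \ref{prop: localization eigenvalues classical gamma = 1 d=1} when $\gamma = 1$, and Proposition \ref{prop: eigenvalue classical gamma geq 1} when $\gamma > 1$.

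\textbf{Step 1: the first eigenvalue is attained in the radial sector $\ell = 0$.} Writing $x = r\theta$ and decomposing $f$ in spherical harmonics, the quadratic form of $G_\xi$ splits as a sum over $(\ell,m)$ of one-dimensional forms in $F_{\ell,m}$. Each of these forms contains the extra term
\begin{align*}
\frac{\ell(d_x+\ell-2)}{r^2}|F_{\ell,m}|^2 \geq 0 .
\end{align*}
Hence the min-max value over all of $D(G_\xi)$ equals the minimum over radial functions. Equivalently, $\lambda_{\xi,0}$ is the infimum over $\ell$ of the bottoms of the $\ell$-sector operators, and these bottoms are nondecreasing in $\ell$.

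\textbf{Step 2: reduce the radial problem to the half-line type operator.} Set $F = r^{-(d_x-1)/2}\mathsf{F}$. This is a unitary map
\begin{align*}
L^2\big((0,L), r^{d_x-1}dr\big) \to L^2(0,L),
\end{align*}
and it transforms the radial operator into \eqref{eqn: eigenproblem radial}. The delicate point, and the main obstacle, is the correspondence of form domains. I must check that radial $H^1_0(B(0,L))$ functions correspond exactly to the form domain of $-\partial_r^2 + \xi^2 r^{2\gamma} + (\nu^2-1/4)/r^2$ on $(0,L)$ with Dirichlet conditions, which is $H_0^1(0,L)$ since $\nu>0$.

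For this I will argue at the level of quadratic forms on $C_c^\infty$ functions. An integration by parts gives
\begin{align*}
\int_0^L |F'|^2 r^{d_x-1}\,dr = \int_0^L \Big(|\mathsf{F}'|^2 + \frac{(d_x-1)(d_x-3)}{4r^2}|\mathsf{F}|^2\Big)\,dr,
\end{align*}
and the boundary terms vanish for compactly supported functions. Adding the potential $(\nu^2-\mathsf{H})/r^2$ and using
\begin{align*}
\frac{(d_x-1)(d_x-3)}{4} - \frac{(d_x-2)^2}{4} = -\frac14,
\end{align*}
the two forms coincide on a common core. Their closures therefore coincide: both are Friedrichs extensions, and both forms are equivalent to $H^1$-type norms by the Hardy inequality \eqref{eqn: hardy inequality} together with $\nu>0$. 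Consequently the bottom of the spectrum of the radial problem is the first eigenvalue $\lambda^{1D}_{\xi,0}$ of the one-dimensional operator studied in Sections \ref{section: spectral analysis classical gamma = 1} and \ref{section: spectral analysis classical gamma > 1}.

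\textbf{Step 3: conclude from the one-dimensional results.}

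For $\gamma = 1$, apply Proposition \ref{prop: localization eigenvalues classical gamma = 1 d=1} with $k=0$ and any fixed $\tau$. For $\xi$ large this gives
\begin{align*}
2(1+\nu) < \frac{\lambda_{\xi,0}}{\xi} \leq 2(1+\nu) + C_1 e^{-C_2\xi},
\end{align*}
so that $\lambda_{\xi,0}\sim 2\xi(1+\nu)$.

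For $\gamma>1$, Proposition \ref{prop: eigenvalue classical gamma geq 1} gives the lower bound $\lambda_{\xi,0} \geq \overline{\mu}\,\xi^{2/(1+\gamma)}$. It also gives the upper bound $(1+\epsilon)(\overline{\mu}+\epsilon)\xi^{2/(1+\gamma)}$ for $\xi\geq\xi_0$. This yields the two-sided estimate with $c=\overline{\mu}$ and $C = 2(\overline{\mu}+1)$ for $\xi$ large. For the remaining range of $\xi$ one either restricts to large $\xi$, which is all that is needed, or uses the continuity and positivity of $\lambda_{\xi,0}$ on compact $\xi$-intervals to adjust the constants. Here $\overline{\mu}>0$ because $\mathcal{Q}_{1,\infty}$ is positive definite and $\mathcal{G}_\gamma$ has compact resolvent.
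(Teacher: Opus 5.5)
Your proposal is correct and follows the paper's own route: restriction to the $\ell=0$ spherical-harmonic sector, the substitution $F=r^{-(d_x-1)/2}\mathsf{F}$ leading to \eqref{eqn: eigenproblem radial}, and then Propositions \ref{prop: localization eigenvalues classical gamma = 1 d=1} and \ref{prop: eigenvalue classical gamma geq 1}. The only difference is how the radial and one-dimensional problems are identified: you use unitary equivalence of the closed quadratic forms, which treats $\gamma=1$ and $\gamma>1$ uniformly, whereas the paper argues at the ODE level that only solutions behaving like $r^{1/2+\nu}$ near $0$ are $H^1$ there, citing the Kummer description.

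One small caveat concerns the upper bound for $\gamma>1$. The bound $C\xi^{2/(\gamma+1)}$ can only hold for $\xi$ bounded away from $0$, because $\lambda_{\xi,0}$ stays above a positive constant as $\xi\to0^+$. So your compact-interval adjustment of constants is valid only on intervals $[\xi_1,\xi_0]$ with $\xi_1>0$, which is all the asymptotic statement requires.
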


\subsubsection{The case of a general smooth bounded domain}

We now assume that $\Omega_x$ is a smooth bounded domain that satisfies assumption \ref{assumption: d > 3}, \textit{i.e.} $0 \in \Omega_x$, and $d_x \geq 3$.

\begin{proposition}\label{prop: first eigenvalue in high dimension}
    Assume that $d_x \geq 3$, $\Omega_x$ has smooth boundary, and $0 \in \operatorname{Int}\left(\Omega_x \right)$. Then, the first eigenvalue of $G_\xi$ satisfies, as $\xi \rightarrow + \infty$,
    \begin{align}
        \lambda_{\xi,0} \sim 2\xi(1+\nu), \quad \text{if } \gamma = 1,
    \end{align}
    and there exists two constants $c,C > 0$ such that 
    \begin{align}
        c \xi^{2/(\gamma + 1)} \leq \lambda_{\xi,0} \leq C \xi^{2/(\gamma + 1)}, \quad \text{if } \gamma > 1.
    \end{align}
\end{proposition}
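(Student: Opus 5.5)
We reduce to the ball case, Proposition \ref{prop: first eigenvalue in the ball}, by domain monotonicity of Dirichlet eigenvalues. Since $0 \in \operatorname{Int}(\Omega_x)$ and $\Omega_x$ is bounded, there exist $0 < r < R$ with
\begin{align*}
    B(0,r) \subset \Omega_x \subset B(0,R).
\end{align*}
Let $\lambda^{r}_{\xi,0}$ and $\lambda^{R}_{\xi,0}$ be the first eigenvalues of $G_\xi$ on $B(0,r)$ and $B(0,R)$. We will show
\begin{align*}
    \lambda^{R}_{\xi,0} \leq \lambda_{\xi,0} \leq \lambda^{r}_{\xi,0}
\end{align*}
and then apply Proposition \ref{prop: first eigenvalue in the ball} to both balls.

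\textbf{Monotonicity.} Both inequalities come from the min-max characterization
\begin{align*}
    \lambda_{\xi,0} = \min\left\{ \int_{\Omega_x} |\nabla u|^2 + \left(\xi^2|x|^{2\gamma} + \frac{\nu^2-\mathsf{H}}{|x|^2}\right)u^2 \, dx, \ u \in H_0^1(\Omega_x), \ \|u\|=1 \right\}.
\end{align*}
This uses that the form domain is $H_0^1$, which holds by the Hardy inequality \eqref{eqn: hardy inequality} since $\nu>0$.

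The key point is that the Hardy constant $\mathsf{H} = (d_x-2)^2/4$ is the same on all three domains, because $0$ is interior to each of them. Hence the quadratic form is literally identical on every domain. Extension by zero then gives $H_0^1(B(0,r)) \subset H_0^1(\Omega_x) \subset H_0^1(B(0,R))$, with the quadratic form preserved. The infimum over the larger space is smaller, which yields both inequalities.

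\textbf{Case $\gamma = 1$.} Proposition \ref{prop: first eigenvalue in the ball} gives $\lambda^{\rho}_{\xi,0} \sim 2\xi(1+\nu)$ for every fixed radius $\rho$. So both the upper and the lower bound divided by $2\xi(1+\nu)$ tend to $1$, and the squeeze gives $\lambda_{\xi,0} \sim 2\xi(1+\nu)$.

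If one wants to be careful about uniformity in the radius, there is a more direct route. Proposition \ref{prop: localization eigenvalues classical gamma = 1 d=1}, applied with $k=0$ after the radial reduction, gives
\begin{align*}
    2(1+\nu) < \frac{\lambda^{\rho}_{\xi,0}}{\xi} \leq 2(1+\nu) + C_1(\rho) e^{-C_2(\rho)\xi}.
\end{align*}
Only two radii are involved, so the constants are harmless.

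\textbf{Case $\gamma > 1$.} The two-sided bounds $c\,\xi^{2/(\gamma+1)} \leq \lambda^{\rho}_{\xi,0} \leq C\xi^{2/(\gamma+1)}$ from Proposition \ref{prop: first eigenvalue in the ball} transfer the same way. We take $c$ from the radius $R$ and $C$ from the radius $r$.

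Alternatively, the lower bound for $\gamma>1$ follows from the scaling argument of Proposition \ref{prop: eigenvalue classical gamma geq 1} with $c = \overline{\mu}$, which does not depend on the radius.

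\textbf{Main obstacle.} The only delicate point is justifying that Proposition \ref{prop: first eigenvalue in the ball} really identifies the \emph{first} eigenvalue of the ball with the radial $\ell=0$ problem. We rely on the paper's sketch for this. If needed, it follows because $\ell \geq 1$ adds the nonnegative term $\ell(d_x+\ell-2)/r^2$ to the radial potential, so each $\ell \geq 1$ radial problem has a larger first eigenvalue than the $\ell = 0$ one.

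Given that identification, the argument above is a short squeeze.
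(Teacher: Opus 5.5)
Your proof is correct and follows essentially the same route as the paper's. You sandwich $\Omega_x$ between two balls centered at $0$, use min-max domain monotonicity through $H_0^1(B(0,r)) \subset H_0^1(\Omega_x) \subset H_0^1(B(0,R))$, and conclude from Proposition \ref{prop: first eigenvalue in the ball}; your additional remarks (same Hardy constant $\mathsf{H}=(d_x-2)^2/4$ on all three domains, the explicit squeeze, which radius supplies which constant, and why the $\ell=0$ sector carries the ground state) fill in details the paper leaves implicit without changing the argument.
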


\begin{proof}
    By assumption \ref{assumption: d > 3}, there exist $0 < L < L'$ such that 
    \begin{align*}
        B(0,L) \subset \Omega_x \subset B(0,L').
    \end{align*}
    By smoothness of $\partial\Omega_x$, we have
    \begin{align}
        H_0^1(B(0,L)) \subset H_0^1(\Omega_x) \subset H_0^1(B(0,L')).
    \end{align}
    Let us denote the first eigenvalue of $G_\xi$ defined on $L^2(B(0,L))$ by $\lambda_\xi(L)$, and similarly for $L'$. From the min-max theorem, we have the monotonicity property
    \begin{align}
        \lambda_\xi(L') \leq \lambda_{\xi,0} \leq \lambda_\xi(L).
    \end{align}
    This concludes the proof thanks to Proposition \ref{prop: first eigenvalue in the ball}. 
\end{proof}

\subsection{Analysis of the generalized operator}\label{section: spectral analysis generalized}

In this subsection, we extract a sequence of eigenvalues of the generalized operator in Fourier for which we have good estimates. We assume that $d_x = 1$, thus $\Omega_x = (0,L)$, $\mathsf{H} = 1/4$. Here and only here, we will have to distinguish the generalized operator, and its eigenvalues, with the classical one. Thus, we introduce some notations for the sake of clarity. We consider the generalized operator 
\begin{align}
    \mathsf{G}_\xi := -\partial_x^2 + \xi^2q(x)^2 + \frac{\nu^2 - 1/4}{x^2},
\end{align}
with domain as prescribed in \eqref{eqn: definition G_xi bis}, and with $q$ that satisfies \ref{assumption: q} with $\gamma = 1$. Moreover, we denote its eigenvalues by $\Lambda_{\xi,k}$, $k \geq 0$. Those of the classical one ($q(x) = x)$ are denoted by $\lambda_{\xi,k}$, associated to the eigenfunctions $u_{\xi,k}$ introduced in Proposition \ref{prop: exact form eigenfunctions singular + cond eigenv}. We stress that the $u_{\xi,k}$ here are not normalized in norm. \\

We perform a classical perturbative argument, taking advantage of the fact that the eigenfunctions of the classical operator concentrate exponentially fast near zero, where, thanks to assumption \ref{assumption: q},
\begin{align*}
    q(x) \sim q'(0)x.
\end{align*}
We use the following well-known Lemma for self-adjoint operators. 

\begin{lemma}\label{lemma: spectral distance}
    Let $A$ be a self-adjoint operator on a Hilbert space $H$, with domain $D(A)$, and $\lambda \in \mathbb{R}$. Then,
    \begin{align}
        \operatorname{dist}(\lambda, \sigma(A)) \leq \frac{\|(A-\lambda)u\|}{\|u\|}, \quad \text{for every } u \in D(A),
    \end{align}
where $\sigma(A)$ denotes the spectrum of $A$.
\end{lemma}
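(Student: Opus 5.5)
We prove the estimate via the spectral theorem. If $\lambda \in \sigma(A)$ the left-hand side is zero and the inequality is trivial, so we may assume $\lambda \notin \sigma(A)$. We also take $u \neq 0$ so that the quotient makes sense. Set $d := \operatorname{dist}(\lambda,\sigma(A)) > 0$; this is positive because $\sigma(A)$ is closed.

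Since $A$ is self-adjoint, the spectral theorem gives a projection-valued measure $E$ supported on $\sigma(A)$ with $A = \int_{\sigma(A)} s \, dE(s)$. For $u \in D(A)$ we then have
\begin{align*}
    \|(A-\lambda)u\|^2 = \int_{\sigma(A)} |s-\lambda|^2 \, d\langle E(s)u,u\rangle .
\end{align*}
On $\sigma(A)$ the integrand satisfies $|s-\lambda| \geq d$, and the total mass of the measure is $\langle E(\sigma(A))u,u\rangle = \|u\|^2$. Hence
\begin{align*}
    \|(A-\lambda)u\|^2 \geq d^2 \|u\|^2 ,
\end{align*}
and dividing by $\|u\|^2$ gives the claim.

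A more elementary route, which avoids the spectral measure, uses the resolvent. For self-adjoint $A$, the operator $R := (A-\lambda)^{-1}$ is bounded and normal with $\sigma(R) = \{(s-\lambda)^{-1} : s \in \sigma(A)\}$. For normal operators the norm equals the spectral radius, so $\|R\| = 1/d$. Writing $u = R(A-\lambda)u$ then yields
\begin{align*}
    \|u\| \leq d^{-1}\|(A-\lambda)u\| .
\end{align*}

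There is no real obstacle in either argument. The only point requiring care is the identity $\|R\| = 1/d$, which is exactly where self-adjointness (or normality) is used; for non-normal operators this estimate fails.
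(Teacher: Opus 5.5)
Your proof is correct. The paper gives no proof of this lemma; it invokes it as a well-known fact about self-adjoint operators. Your first argument, via the spectral measure and $\|(A-\lambda)u\|^2=\int_{\sigma(A)}|s-\lambda|^2\,d\langle E(s)u,u\rangle \geq d^2\|u\|^2$, is the standard justification behind that citation.

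There is one small imprecision in your resolvent route. When $A$ is unbounded, $0\in\sigma(R)$ as well, because $\operatorname{Ran}R=D(A)\neq H$. So the correct statement is $\sigma(R)=\{(s-\lambda)^{-1}: s\in\sigma(A)\}\cup\{0\}$. This matters here because the paper applies the lemma to the unbounded operator $\mathsf{G}_\xi$. It is harmless, since adding $0$ does not change the spectral radius, so $\|R\|=1/d$ still holds. Note also that $\lambda\in\mathbb{R}$ makes $R$ self-adjoint, not merely normal.
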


We will also need the following Lemma.

\begin{lemma}\label{lemma: monotony kummer function}
    Let $\nu > 0$. There exists $\xi_0 > 0$, such that for every $\xi \geq \xi_0$, the function 
    \begin{align}\label{eqn: definition of kummer in lemma monotonicity}
        x \in [0,+\infty) \mapsto M\left(-\frac{\lambda_{\xi,0} - 2\xi(1+\nu)}{4\xi}, 1+\nu,\xi x^2 \right),
    \end{align}
    where $M(\cdot,\cdot,\cdot)$ is the Kummer function introduced in \eqref{eqn: definition kummer function}, is decreasing and strictly concave.
\end{lemma}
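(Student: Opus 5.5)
Write $a_\xi := -\frac{\lambda_{\xi,0} - 2\xi(1+\nu)}{4\xi}$ and set $w(s) := M(a_\xi,1+\nu,s)$, so that the function in \eqref{eqn: definition of kummer in lemma monotonicity} is $x \mapsto w(\xi x^2)$. By Proposition \ref{prop: localization eigenvalues classical gamma = 1 d=1} with $k=0$ we have
\begin{align*}
-\frac{C_1}{4\xi}e^{-C_2\xi} \leq a_\xi < 0 .
\end{align*}
Hence $a_\xi$ is negative and exponentially small. The plan is to exploit the power series \eqref{eqn: definition kummer function} directly. Since $a_\xi \in (-1,0)$, the Pochhammer symbols satisfy $(a_\xi)_k = a_\xi (a_\xi+1)_{k-1}$, and $(a_\xi+1)_{k-1} > 0$ for every $k\geq 1$. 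Therefore
\begin{align*}
w(s) = 1 + a_\xi \sum_{k\geq 1} \frac{(a_\xi+1)_{k-1}}{(1+\nu)_k\, k!}\, s^k ,
\end{align*}
where all coefficients of $s^k$, $k\ge1$, are strictly negative. Consequently $w'(s)<0$ and $w''(s)<0$ for all $s>0$ (with $w'(0)=a_\xi/(1+\nu)<0$). So $w$ is decreasing and strictly concave on $[0,+\infty)$.

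Next we pass to the variable $x$ with $g(x) := w(\xi x^2)$. Monotonicity is immediate: $g'(x) = 2\xi x\, w'(\xi x^2) \le 0$, with equality only at $x=0$, so $g$ is decreasing. For concavity,
\begin{align*}
g''(x) = 2\xi\, w'(\xi x^2) + 4\xi^2 x^2\, w''(\xi x^2).
\end{align*}
Both terms are strictly negative for $x>0$, and at $x=0$ we have $g''(0)=2\xi w'(0) <0$. Hence $g''<0$ on $[0,+\infty)$ and $g$ is strictly concave.

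The only genuine input is the sign $a_\xi\in(-1,0)$. This is where $\xi\ge\xi_0$ enters, through the strict lower bound $\lambda_{\xi,0}/\xi > 2(1+\nu)$ and the exponentially small upper error in Proposition \ref{prop: localization eigenvalues classical gamma = 1 d=1}; any $\xi_0$ making $C_1 e^{-C_2\xi_0}<4\xi_0$ suffices. The sign of $(a_\xi+1)_{k-1}$ is the one point I would double-check. It requires $a_\xi>-1$, which then makes every factor $a_\xi+j$, $j\ge1$, positive. No delicate asymptotics are required beyond this.
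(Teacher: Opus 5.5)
Your proof is correct and is essentially the paper's argument: both take $a_\xi\in(-1,0)$ from Proposition~\ref{prop: localization eigenvalues classical gamma = 1 d=1} and read off the signs of the derivatives, the paper through $\partial_z M(a,b,z)=\frac{a}{b}M(a+1,b+1,z)$ with $M(a_\xi+1,2+\nu,\cdot)>0$, you through the equivalent sign pattern $(a_\xi)_k=a_\xi(a_\xi+1)_{k-1}$ of the series coefficients (your factor $4\xi^2x^2$ in $g''$ is the right one, where the paper's display has a typo). One harmless slip: the proposition gives $a_\xi\ge-\frac{C_1}{4}e^{-C_2\xi}$ without the factor $1/\xi$, so the condition on $\xi_0$ should read $C_1e^{-C_2\xi_0}<4$, and $\xi_0$ must also be at least the threshold of that proposition.
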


\begin{proof}
    Let us denote the function introduced in \eqref{eqn: definition of kummer in lemma monotonicity} by
    \begin{align}
        x \in [0,+\infty) \mapsto K(a_{\xi,0},1+\nu,x),
    \end{align}
    where we set 
    \begin{align}
        a_{\xi,0} := -\frac{\lambda_{\xi,0} - 2\xi(1+\nu)}{4\xi}.
    \end{align}
    We prove that $K'\left(a_{\xi,0}, 1 + \nu, x \right) < 0$ and $K''\left(a_{\xi,0}, 1 + \nu, x \right) < 0$ for every $x \in (0,+\infty)$, where the prime notation stands for the differentiation with respect to $x$. \\

    We have, by definition of the Kummer function (or see \cite[Section 13.3(ii) p. 325]{OlverHandbook2010}), 
    \begin{align}
       K'\left(a_{\xi,0}, 1 + \nu, x \right)= 2\xi x\frac{ a_{\xi,0}}{1+\nu}M\left(a_{\xi,0} + 1, 2 + \nu, \xi x^2 \right).
    \end{align}
    Now, by Proposition \ref{prop: localization eigenvalues classical gamma = 1 d=1}, there exists $\xi_0 > 0$, such that for every $\xi \geq \xi_0$, $a_{\xi,0} < 0$ is exponentially close to $0$, or in particular, $a_{\xi,0} > - 1/2$. It follows that for every $\xi \geq \xi_0$, we have $a_{\xi,0} + 1 > 0$. By definition of the Kummer function, we observe that $M(a,b,z)$ does not have any real zeros if $a,b > 0$, and that  $M(a,b,z) > 0$. It follows that $M\left(a_{\xi,0} + 1, 2 + \nu, \xi x^2 \right) > 0$, and thus, $K'\left(a_{\xi,0}, 1 + \nu, x \right) < 0$ for every $x \in (0, + \infty)$ since $a_{\xi,0} < 0$, and $M'\left(a_{\xi,0}, 1 + \nu, 0 \right) = 0$. \\
    
    Reusing once again these arguments, we obtain, for $\xi \geq \xi_0$, 
    \begin{align*}
        K''\left(a_{\xi,0}, 1 + \nu, x \right) &= 2\xi\frac{ a_{\xi,0}}{1+\nu}M\left(a_{\xi,0} + 1, 2 + \nu, \xi x^2 \right) +  2\xi x\frac{a_{\xi,0}(a_{\xi,0}+1)}{(1+\nu)(2+\nu)}M \left(a_{\xi,0} + 2, 3 + \nu, \xi x^2 \right) \\
        &< 0,
    \end{align*}
    for every $x \in [0,+\infty)$, since the first term is negative because $a_{\xi,0} < 0$, and the second term is negative for the same reason, noting that $a_{\xi,0} + 1 > 0$. This concludes the proof.
\end{proof}

\begin{remark}
    Lemma \ref{lemma: monotony kummer function}, together with Proposition \ref{prop: localization eigenvalues classical gamma = 1 d=1}, is sufficient to obtain the lower bound of Theorem \ref{thm: main theorem control classical grushin rectangle vertical} when $d_x = 1$, $\Omega_x = (0,L)$ for some $L > 0$, without relying on Agmon estimates. 
\end{remark}

\begin{proposition}\label{prop: eigenvalue generalized operator}
    Assume that $q$ satisfies \ref{assumption: q} with $\gamma = 1$. There exists $\xi_0 >0$ and a constant $C > 0$ such that for every $\xi \geq \xi_0$, $\mathsf{G}_\xi$ admits an eigenvalue $\Lambda_\xi$ that satisfies
    \begin{align}
        |\Lambda_\xi - 2q'(0)\xi(1+\nu)| \leq C\sqrt{\xi}.
    \end{align}
\end{proposition}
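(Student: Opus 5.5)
The plan is a quasimode argument combined with Lemma \ref{lemma: spectral distance}. Set $\alpha := q'(0) > 0$ and $\tilde\xi := \alpha\xi$. For the test function take the (non-normalized) first eigenfunction of the classical operator with parameter $\tilde\xi$, given by Proposition \ref{prop: exact form eigenfunctions singular + cond eigenv},
\begin{align*}
    u(x) := u_{\tilde\xi,0}(x) = e^{-\tilde\xi x^2/2}x^{\frac12+\nu}M\left(a_{\tilde\xi,0},1+\nu,\tilde\xi x^2\right),
\end{align*}
which lies in $D(\mathsf{G}_\xi)$, since $\mathsf{G}_\xi$ and $G_{\tilde\xi}$ have the same domain ($q$ being bounded). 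It satisfies
\begin{align*}
    \left(-\partial_x^2 + \tilde\xi^2x^2 + \tfrac{\nu^2-1/4}{x^2}\right)u = \lambda_{\tilde\xi,0}u .
\end{align*}
Hence
\begin{align*}
    (\mathsf{G}_\xi - \lambda_{\tilde\xi,0})u = \xi^2\left(q(x)^2 - \alpha^2x^2\right)u .
\end{align*}
Lemma \ref{lemma: spectral distance} then produces an eigenvalue $\Lambda_\xi$ with
\begin{align*}
    |\Lambda_\xi - \lambda_{\tilde\xi,0}| \leq \xi^2\frac{\|(q^2-\alpha^2x^2)u\|}{\|u\|}.
\end{align*}
By Proposition \ref{prop: localization eigenvalues classical gamma = 1 d=1} with $k=0$, we have $|\lambda_{\tilde\xi,0} - 2\tilde\xi(1+\nu)| \leq C_1e^{-C_2\tilde\xi}$. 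So it suffices to show that the relative error $\xi^2\|(q^2-\alpha^2x^2)u\|/\|u\|$ is $O(\sqrt{\xi})$.

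Since $q\in C^1([0,L])$ with $q(0)=0$ and $q'(0)=\alpha$, we can write $q(x)=\alpha x + x\,r(x)$ with $r$ continuous and $r(0)=0$. Then
\begin{align*}
    |q^2-\alpha^2x^2| \leq Cx^2\,|r(x)|.
\end{align*}
Getting $O(\sqrt\xi)$ requires a bound of the form $x^2 |r(x)| \lesssim x^3$, i.e.\ $q \in C^2$ near $0$, or at least $|r(x)|\le Cx$. I expect this to be the main obstacle: under \ref{assumption: q} with only $C^1$ regularity, the remainder is merely $o(x)$, and this only yields $o(\xi)$ rather than $O(\sqrt\xi)$. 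I would first try to read \ref{assumption: q} as giving a Lipschitz derivative (or $q\in C^2$), so that $|q^2-\alpha^2x^2|\le Cx^3$ on $[0,L]$, and in the write-up I would flag this point explicitly.

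Given the bound $|q^2-\alpha^2x^2|\le Cx^3$, the problem reduces to the moment estimate
\begin{align*}
    \int_0^L x^6u^2 \ dx \leq C\tilde\xi^{-3}\int_0^L u^2 \ dx .
\end{align*}
This comes from the explicit form of $u$ and Lemma \ref{lemma: monotony kummer function}: for $\xi$ large, $K(x)=M(a_{\tilde\xi,0},1+\nu,\tilde\xi x^2)$ is decreasing from $K(0)=1$ and positive on $[0,L)$, since it is the ground state and vanishes only at $L$. Hence $0< K\le 1$ there, and
\begin{align*}
    \int_0^L x^6u^2 \leq \int_0^\infty x^{7+2\nu}e^{-\tilde\xi x^2}\ dx = c\,\tilde\xi^{-(4+\nu)} .
\end{align*}
For the lower bound, restrict to $x\le \tilde\xi^{-1/2}$. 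There the series expansion and the exponential smallness of $a_{\tilde\xi,0}$ give $K\ge 1/2$, so
\begin{align*}
    \int_0^L u^2 \geq \tfrac14\int_0^{\tilde\xi^{-1/2}}x^{1+2\nu}e^{-\tilde\xi x^2}\ dx = c'\,\tilde\xi^{-(1+\nu)} .
\end{align*}
The ratio is $O(\tilde\xi^{-3})$, so the relative error is at most $C\xi^2\tilde\xi^{-3/2} = O(\sqrt\xi)$.

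Combining this with the exponentially small error on $\lambda_{\tilde\xi,0}$ gives
\begin{align*}
    |\Lambda_\xi - 2\alpha\xi(1+\nu)| \le C\sqrt\xi
\end{align*}
for $\xi\ge\xi_0$, which is the claim.
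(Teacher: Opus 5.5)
This is essentially the paper's proof: the same quasimode $u_{q'(0)\xi,0}$, Lemma \ref{lemma: spectral distance}, and the Kummer-monotonicity bounds $\int_0^L x^6u^2\lesssim\xi^{-(4+\nu)}$ and $\|u\|^2\gtrsim\xi^{-(1+\nu)}$; the one difference is that your global bound $0<K\le 1$ lets you skip the paper's appeal to the Agmon estimate on $(\delta,L)$. Your regularity flag is justified and applies equally to the paper, which asserts $q^2-q'(0)^2x^2=O(x^3)$ from \ref{assumption: q} although only $q\in C^1$ is assumed, and the claim genuinely fails under $C^1$ alone: for $q(x)=x+x^{1+\beta}$ with $0<\beta<1$ one has $\mathsf{G}_\xi\ge G_\xi$, $\Lambda_{\xi,0}-\lambda_{\xi,0}\gtrsim\xi^{1-\beta/2}$ and $\Lambda_{\xi,1}\ge\lambda_{\xi,1}>(6+2\nu)\xi$, so no eigenvalue lies within $C\sqrt{\xi}$ of $2\xi(1+\nu)$, and you must add $|q(x)-q'(0)x|\le Cx^2$ near $0$ (bare $C^1$ gives only $o(\xi)$ by your argument, which is all that Theorem \ref{thm: main theorem control generalized grushin rectangle vertical} uses).
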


\begin{proof}
    Write the generalized operator as
    \begin{align}
        -\partial_x^2 + q(x)^2\xi^2 + \frac{\nu^2 - 1/4}{x^2} = -\partial_x^2 + q'(0)^2\xi^2 x^2 + \frac{\nu^2 - 1/4}{x^2} + \xi^2R(x),
    \end{align}
    where 
    \begin{align}\label{eqn: decomposition R(x)}
        R(x) =  q(x)^2 - q'(0)^2 x^2 = O(x^{3}),
    \end{align} 
    and the big $O$ notation holds near $x = 0$, thanks to assumption \ref{assumption: q}. Thus, 
    \begin{align}
        \mathsf{G}_\xi = G_{q'(0)\xi} + \xi^2R(x),
    \end{align}
    where $G_{q'(0)\xi}$ stands for the classical operator. We have 
    \begin{align}\label{eqn: G_xi - lambda_xi in proof of generalized eigenvalues}
        ( \mathsf{G}_\xi - \lambda_{q'(0)\xi,0})u_{q'(0)\xi,0} = \xi^2 R(x)u_{q'(0)\xi,0}.
    \end{align}
    To simplify the presentation, let us set
    \begin{align}
        \lambda := \lambda_{q'(0)\xi,0}, \quad \text{and} \quad u :=u_{q'(0)\xi,0}. 
    \end{align}
    Fix some $\delta \in (0,1) \cap (0,L)$ such that we have \eqref{eqn: decomposition R(x)} on $(0,\delta)$. We have
    \begin{align*}
        \int_0^L R(x)^2u(x)^2 \ dx = \int_0^\delta R(x)^2u(x)^2 \ dx + \int_\delta^L R(x)^2u(x)^2 \ dx = I_1 + I_2.
    \end{align*}
    Let us first focus on $I_1$. Recall the explicit form of $u$ from Proposition \ref{prop: exact form eigenfunctions singular + cond eigenv} with $A = 1$,
    \begin{align}\label{eqn: def eigenfunction classical q'(0)xi}
        u(x) = e^{- q'(0)\xi x^2/2}x^{\frac{1}{2} + \nu}M \left(-\frac{\lambda - 2q'(0)\xi (1 + \nu)}{4q'(0)\xi}, 1 + \nu, q'(0)\xi x^2 \right).
    \end{align}
   Since the Kummer function in \eqref{eqn: def eigenfunction classical q'(0)xi} values $1$ at $x=0$, and $0$ at $x=L$, it follows from Lemma \ref{lemma: monotony kummer function} that for any $\xi$ sufficiently large,
   \begin{align}\label{eqn: pointwise bound first kummer function}
       1 - \frac{x}{L} \leq M \left(-\frac{\lambda - 2q'(0)\xi (1 + \nu)}{4q'(0)\xi}, 1 + \nu, q'(0)\xi x^2 \right) \leq 1, \quad x \in [0,L].
   \end{align}
   Therefore, 
   \begin{align}\label{eqn: pointwise bound first eigenfunction classical}
      \left(1 - \frac{x}{L} \right)^2 e^{- q'(0)\xi x^2}x^{1 + 2\nu} \leq u(x)^2 \leq e^{- q'(0)\xi x^2}x^{1 + 2\nu}.
   \end{align}
   Assuming $\xi \geq 1/\delta^2$, we decompose $I_1$ as a sum of integrals on $(1,1/\sqrt{\xi})$ and $(1/\sqrt{\xi},\delta)$. We have, on one hand, using \eqref{eqn: decomposition R(x)} and \eqref{eqn: pointwise bound first eigenfunction classical},
   \begin{align*}
       \int_0^{1/\sqrt{\xi}} R(x)^2u(x)^2 \ dx &\leq \int_0^{1/\sqrt{\xi}} x^6 x^{1 + 2\nu} e^{- q'(0)\xi x^2}  \ dx \\
       &\leq \left( \frac{1}{ \sqrt{\xi}} \right)^{7 + 2\nu} \int_0^{1/\sqrt{\xi}}  e^{- q'(0)\xi x^2}  \ dx \\
       &= \left( \frac{1}{ \sqrt{\xi}} \right)^{8 + 2\nu} \int_0^1  e^{- q'(0) x^2}  \ dx.
   \end{align*}
   On the other hand, making the change of variable $z = \sqrt{\xi}x$, using again \eqref{eqn: decomposition R(x)} and \eqref{eqn: pointwise bound first eigenfunction classical},
   \begin{align*}
       \int_{1/\sqrt{\xi}}^\delta R(x)^2u(x)^2 \ dx &\leq \int_{1/\sqrt{\xi}}^\delta x^6 x^{1 + 2\nu} e^{- q'(0)\xi x^2}  \ dx \\
       &= \left( \frac{1}{\sqrt{\xi}}\right)^{8+2\nu} \int_1^{\delta\sqrt{\xi}} z^6 z^{1 + 2\nu} e^{- q'(0)z^2}  \ dz \\
       &\leq \left( \frac{1}{\sqrt{\xi}}\right)^{8+2\nu} \int_1^\infty z^6 z^{1 + 2\nu} e^{- q'(0) z^2}  \ dz.
   \end{align*}
   Hence, thanks to \eqref{eqn: G_xi - lambda_xi in proof of generalized eigenvalues}, we obtained that, for some $C > 0$, and any $\xi > 0$ sufficiently large, 
   \begin{align}
       \frac{\|(\mathsf{G}_\xi - \lambda)u\|^2}{\|u\|^2} &\leq C \frac{\xi^4}{\|u\|^2} \left( \frac{1}{\sqrt{\xi}}\right)^{8+ 2\nu} + \frac{\xi^4}{\|u\|^2}\int_\delta^L R(x)^2u(x)^2 \ dx.
   \end{align}
   We have, by boundedness of $R$, for some $ C > 0$ different from the preceding one,
   \begin{align*}
       \frac{\xi^4}{\|u\|^2}\int_\delta^L R(x)^2u(x)^2 \ dx \leq C\xi^4 \int_\delta^L \frac{u(x)^2}{\|u\|^2} \ dx.
   \end{align*}
   
   We can apply Proposition \ref{prop: agmon decay grushin general} to the above inequality, as $\lambda_{q'(0)\xi} = o(\xi^2)$ thanks to Proposition \ref{prop: localization eigenvalues classical gamma = 1 d=1}, and $u/\|u\|$ is a normalized in norm eigenfunction. It is therefore exponentially small on $(\delta,L)$, and we obtain, for some new constant $C>0$, and for every $\xi$ sufficiently large, \\
   \begin{align}\label{eqn: unfinished estimate spectral distance}
       \frac{\|(\mathsf{G}_\xi - \lambda)u\|^2}{\|u\|^2} &\leq C \frac{\xi^4}{\|u\|^2} \left( \frac{1}{\sqrt{\xi}}\right)^{8+ 2\nu}.
   \end{align}
   
   Let us bound from below the norm of $u$, in order to estimate from above \eqref{eqn: unfinished estimate spectral distance}, and conclude the proof. Using \eqref{eqn: pointwise bound first eigenfunction classical}, we have
   \begin{align*}
       \int_0^L u(x)^2 \ dx &\geq \int_0^L x^{1+2\nu}\left(1 - \frac{x}{L} \right)^2 e^{-q'(0)\xi x^2} \ dx \\
       &\geq \int_0^{1/\sqrt{\xi}} x^{1+2\nu}\left(1 - \frac{x}{L} \right)^2 e^{-q'(0)\xi x^2} \ dx \\
       &\geq \left( \frac{1}{\sqrt{\xi}}  \right)^{1+2\nu}\left(1 - \frac{1}{\sqrt{\xi}L} \right)^2  \int_0^{1/\sqrt{\xi}} e^{-q'(0)\xi x^2} \ dx \\
       &= \left( \frac{1}{\sqrt{\xi}}  \right)^{2+2\nu}\left(1 - \frac{1}{\sqrt{\xi}L} \right)^2  \int_0^1 e^{-q'(0)x^2} \ dx \\
       &\geq C\left( \frac{1}{\sqrt{\xi}}  \right)^{2+2\nu}.
   \end{align*}
   Hence, applying Lemma \ref{lemma: spectral distance}, from what we computed above combined with \eqref{eqn: unfinished estimate spectral distance}, we finally obtain, for some $C > 0$, for every $\xi > 0$ sufficiently large, 
   \begin{align*}
       \operatorname{dist}(\lambda, \sigma(\mathsf{G}_\xi)) &\leq \frac{\|(\mathsf{G}_\xi-\lambda)u\|}{\|u\|} \\
       &\leq C \xi^2 \left( \frac{1}{\sqrt{\xi}}\right)^{4+ \nu}\left(\sqrt{\xi}  \right)^{1+\nu}\\
       &= C\sqrt{\xi},
   \end{align*}
    which concludes the proof, since, by Proposition \ref{prop: localization eigenvalues classical gamma = 1 d=1}, $\lambda := \lambda_{q'(0)\xi,0} \sim 2q'(0)\xi(1+\nu)$ as $\xi \rightarrow + \infty$. 
\end{proof}

\section{Proofs of the theorems}\label{section: proofs}

Recall from Section \ref{section : sketch of proof carleman} that we need to study whether or not uniform observability \eqref{eqn: observability inequality uniform fourier} holds for system \eqref{stm : adjoint system generalized fourier}. We fix the observation set $\omega_x \subset \Omega_x$ such that $0 \notin \overline{\omega_x}$.

\subsection{Upper bound for the minimal time of observability in Theorem \ref{thm: main theorem control classical grushin rectangle vertical}}\label{section: proof positive results classical equation}\label{section: proof positive result}

Here we consider system \eqref{stm : adjoint system generalized fourier} with $\Omega_x$ that satisfies \ref{assumption: d = 1} or \ref{assumption: d > 3}, and $q(x) = |x|$. We assume that there exists a bounded subdomain $\mathcal{O} \subset \subset \Omega_x$ such that $\partial \mathcal{O} \subset \partial\omega_x$ and $\omega_x \subset \Omega_x \setminus \mathcal{O}$ (see Figure \ref{fig: geometric_assumption}). Since $\omega_x$ is open, there exists $\epsilon_0 > 0$ such that for every $\epsilon \in (0,\epsilon_0)$, the set $\mathcal{O}_\epsilon := \{x \in \Omega_x, \operatorname{dist}(x,\mathcal{O}) < \epsilon\}$ satisfies $\partial \mathcal{O}_\epsilon \subset \omega_x$. Moreover, since $\mathcal{O}$ is smooth, up to choosing $\epsilon_0$ smaller, the set $\mathcal{O}_\epsilon$ is also smooth. We choose such a $\epsilon \in (0,\epsilon_0)$.\\

Letting $n \in \mathbb{N}^*$, and $f_{0,n} \in L^2(\Omega_x)$, we denote the associated solution of system \eqref{stm : adjoint system generalized fourier} by $f_n$. \\

As explained in Section \ref{section : sketch of proof carleman}, we first prove that systems \eqref{stm : adjoint system generalized fourier} are uniformly observable from $\partial\mathcal{O}_\epsilon$. We let $T_0 > 0$, and we apply Proposition \ref{prop: cost of boundary observability} with $\mathcal{O}_\epsilon$. We obtain, for $n$ large enough,  
    \begin{align}\label{eqn: proof positive result cost}
        \int_{\mathcal{O}_\epsilon} |f_n(T_0,x)|^2\ dx \leq C\xi_n e^{L^2/2T_0}e^{\xi_n L^2} \int_0^T \int_{\partial \mathcal{O}_\epsilon} |\partial_{\nu_x} f_n(t,x)|^2 \ dS \ dt, 
    \end{align}
where $L = \sup \{|x|, \ x \in \mathcal{O}_\epsilon\}$.\\

Now, let $T > T_0$. Thanks to Proposition \ref{prop: localization eigenvalues classical gamma = 1 d=1} if $d_x = 1$, and Proposition \ref{prop: first eigenvalue in high dimension} if $d_x \geq 3$, for any $n \geq 1$ we have
\begin{align}\label{eqn: proof positive result dissipation}
    \|f_n(T)\|_{L^2(\mathcal{O}_\epsilon)}^2 \leq e^{-2\lambda_{n,0}(T-T_0)}\|f_n(T_0)\|_{L^2(\mathcal{O}_\epsilon)}^2 \leq e^{-4\xi_n(1+\nu)(T-T_0)}\|f_n(T_0)\|_{L^2(\Omega_x)}^2.
\end{align}
Thus, combining \eqref{eqn: proof positive result cost} and \eqref{eqn: proof positive result dissipation}, 
\begin{align*}
    \int_{\mathcal{O}_\epsilon} |f_n(T,x)|^2 \ dx &\leq e^{-4\xi_n(1+\nu)(T-T_0)} C\xi_n e^{L^2/2T_0}e^{\xi_n L^2} \int_0^T \int_{\partial \mathcal{O}_\epsilon} |\partial_{\nu_x} f_n(t,x)|^2 \ dS \ dt.
\end{align*}

It follows that for any $n$ large enough, the coefficient in front of the integral in the right-hand side above is uniformly bounded by some constant $C'> 0$ as long as 
\begin{align}
    T > \frac{L^2}{4(1+\nu)} + T_0 = \frac{\sup \{|x|^2, \ x \in \mathcal{O}_\epsilon\}}{4(1+\nu)} + T_0.
\end{align}
We thus deduce a first upper bound on the minimal time of boundary observability for system \eqref{stm : adjoint system classical} posed on $\mathcal{O}_\epsilon$ with Dirichlet boundary conditions, since $T_0 > 0$ can be taken arbitrary small, 
\begin{align}\label{eqn: upper bound minimal time obs O_epsilon}
    T(\partial \mathcal{O}_\epsilon \times \Omega_y) \leq \frac{\sup \{|x|^2, \ x \in \mathcal{O}_\epsilon\}}{4(1+\nu)}.
\end{align}
This is equivalent to saying that the minimal time of boundary null-controllability of system \eqref{eqn: boundary control system posed on O_epsilon} from $\partial \mathcal{O}_\epsilon \times \Omega_y$ satisfies the same upper bound as in \eqref{eqn: upper bound minimal time obs O_epsilon}.\\

Using classical extension arguments, this yields that that the minimal time of internal null-controllability of system \eqref{eqn: internal control system from O_epsilon posed on Omega} from $(\mathcal{O}_\epsilon \setminus \mathcal{O}) \times \Omega_y$ also satisfies the upper bound 
\begin{align}
    T \left((\mathcal{O}_\epsilon \setminus \mathcal{O}) \times \Omega_y \right) \leq \frac{\sup \{|x|^2, \ x \in \mathcal{O}_\epsilon\}}{4(1+\nu)}.
\end{align}
Now, since $(\mathcal{O}_\epsilon \setminus \mathcal{O}) \times \Omega_y \subset \omega$, we have $T(\omega) \leq T((\mathcal{O}_\epsilon \setminus \mathcal{O}) \times \Omega_y)$, and since $\epsilon > 0$ can be taken arbitrary small, we deduce that the minimal time of observability of system \eqref{stm : adjoint system classical} from $\omega$ satisfies
\begin{align}
    T(\omega) \leq \frac{\sup \{|x|^2, \ x \in \mathcal{O}\}}{4(1+\nu)},
\end{align}
which concludes the proof. 

\subsection{Lower bound for the minimal time of observability in Theorems \ref{thm: main theorem control classical grushin rectangle vertical}, \ref{thm: main theorem control classical grushin rectangle vertical 2}  and \ref{thm: main theorem control generalized grushin rectangle vertical}}\label{section: proofs negative results}

Let us now obtain lower bounds on the minimal times of observability. That is, obtain a negative result for Theorem \ref{thm: main theorem control classical grushin rectangle vertical}, and prove Theorems \ref{thm: main theorem control classical grushin rectangle vertical 2} and \ref{thm: main theorem control generalized grushin rectangle vertical}. All these proofs are treated in a similar manner. 

\begin{proof}[Proof of the lower bound in Theorem \ref{thm: main theorem control classical grushin rectangle vertical}]

We consider system \eqref{stm : adjoint system generalized fourier} with $\Omega_x$ that satisfies \ref{assumption: d = 1} or \ref{assumption: d > 3}, and $q(x) = |x|^2$. Denote, for each $n$, the first eigenfunction normalized in norm by $u_{n,0}$, associated to $\lambda_{n,0}$. Assume that uniform observability \eqref{eqn: observability inequality uniform fourier} holds for some $C > 0$ and some horizon time $T > 0$. Testing \eqref{eqn: observability inequality uniform fourier} against the sequence of solutions $e^{-\lambda_{n,0}t}u_{n,0}$, this implies that there exists some $C > 0$, such that, for every $n \geq 1$, 
\begin{align}\label{eqn: proof negative result implied obs ineq}
    1 \leq C T e^{2\lambda_{n,0}T} \int_{\omega_x} |u_{n,0}(x)|^2 \ dx.
\end{align}

Thanks to Proposition \ref{prop: localization eigenvalues classical gamma = 1 d=1} if $d_x = 1$, and Proposition \ref{prop: first eigenvalue in high dimension} if $d_x \geq 3$, we obtain, for any $\epsilon > 0$ arbitrary small, and for any $n \geq 1$ sufficiently large depending on $\epsilon > 0$, 
\begin{align}\label{eqn: proof negative result classical obs ineq}
    1 \leq C T e^{4\xi_n(1+\epsilon)(1+\nu)T} \int_{\omega_x} |u_{n,0}(x)|^2 \ dx.
\end{align}
Again thanks to Proposition \ref{prop: localization eigenvalues classical gamma = 1 d=1} if $d_x = 1$, and Proposition \ref{prop: first eigenvalue in high dimension} if $d_x \geq 3$, we can apply the Agmon estimates of Proposition \ref{prop: agmon decay grushin general}. Thus, we obtain, for some $C > 0$ that may differ with the preceding one, and any $n$ large enough, 
\begin{align}\label{eqn: proof negative result gamma = 1 classical obs ineq}
    1 \leq C T e^{4\xi_n(1+\epsilon)(1+\nu)T} e^{-2\xi_n(1-\delta)d_{\operatorname{agm},\delta}(\omega_x,F_\delta)},
\end{align}
where we recall that $F_\delta := \{x \in \Omega_x, |x|^2 \leq \delta \}$ is a connected neighborhood of zero in $\Omega_x$, with $\delta$ sufficiently and arbitrary small. Observe that \eqref{eqn: proof negative result gamma = 1 classical obs ineq} cannot hold in the limit $n \rightarrow +\infty$ if 
\begin{align*}
    T < \frac{1-\delta}{1+\epsilon}\frac{d_{\operatorname{agm},\delta}(\omega_x,F_\delta)}{2(1+\nu)}.
\end{align*}
Thus, this gives a first lower bound 
\begin{align}
    T(\omega_x \times \Omega_y) > \frac{1-\delta}{1+\epsilon}\frac{d_{\operatorname{agm},\delta}(\omega_x,F_\delta)}{2(1+\nu)}.
\end{align}
Since the parameters can be chosen arbitrary small, we deduce,
\begin{align}
    T(\omega_x \times \Omega_y) \geq \frac{d_{\operatorname{agm}}(\omega_x)}{2(1+\nu)},
\end{align}
where we recall that $d_{\operatorname{agm}}(\omega_x) = d_{\operatorname{agm},0}(\omega_x,0)$. Since, from the definition \eqref{eqn: definition Agmon distance} of the Agmon distance with $q(x) = |x|$ we have
\begin{align*}
    d_{\operatorname{agm}}(\omega_x) = \int_0^1 s|x_m|^2 \ ds,
\end{align*}
where $x_m \in \overline{\omega}_x$ is such that $|x_m| = \operatorname{dist}(\omega_x,0_{\mathbb{R}^{d_x}})$, we thus get the sought result
\begin{align}
    T(\omega_x \times \Omega_y) \geq \frac{\operatorname{dist}(\omega_x,0_{\mathbb{R}^{d_x}})^2}{4(1+\nu)}.
\end{align}
This concludes the proof of Theorem \ref{thm: main theorem control classical grushin rectangle vertical}.
\end{proof}

\begin{proof}[Proof of Theorem \ref{thm: main theorem control classical grushin rectangle vertical 2}]
    We consider system \eqref{stm : adjoint system generalized fourier} with $\Omega_x$ that satisfies \ref{assumption: d = 1} or \ref{assumption: d > 3}, and $q(x) = |x|^{2\gamma}$, with $\gamma > 1$. We follow the exact same lines as above, but now using Proposition \ref{prop: eigenvalue classical gamma geq 1} if $d_x = 1$, and Proposition \ref{prop: first eigenvalue in high dimension} if $d_x \geq 3$. We are still in position to apply the Agmon estimate of Proposition \ref{prop: agmon decay grushin general}, and we obtain from \eqref{eqn: proof negative result implied obs ineq},
    \begin{align}
        1 \leq CTe^{2c\xi_n^{2/(\gamma + 1)}T}e^{-2\xi_n(1-\delta)d_{\operatorname{agm},\delta}(\omega_x,F_\delta)}.
    \end{align}
    Observe that this can never hold for any time $T > 0$ as $n \rightarrow + \infty$ since $\gamma >1.$ Thus,
    \begin{align}
        T(\omega_x \times \Omega_y) = + \infty,
    \end{align}
    which concludes the proof.
\end{proof}

\begin{proof}[Proof of theorem \ref{thm: main theorem control generalized grushin rectangle vertical}]
    We consider system \eqref{stm : adjoint system generalized fourier} with $\Omega_x = (0,L)$ for some $L > 0$, $q$ satisfies \ref{assumption: q}, and $\omega_x = (a,b)$ for some $0 < a < b \leq L$. Now using Proposition \ref{prop: eigenvalue generalized operator}, there exists for every $n$ large enough, an eigenvalue $\Lambda_{_n}$ that satisfies
    \begin{align}
        |\Lambda_n - 2q'(0)\xi_n(1+\nu)| \leq C\sqrt{\xi_n}.
    \end{align}
    We deduce that for every $\epsilon > 0$, for any $n$ large enough,
    \begin{align}
        \Lambda_n \leq 2(1+\epsilon)q'(0)(1+\nu).
    \end{align}
    Denote by $u_n$ the associated normalized eigenfunction. Thus, we get, as for \eqref{eqn: proof negative result gamma = 1 classical obs ineq}, by applying the Agmon estimates of Proposition \ref{prop: agmon decay grushin general},
    \begin{align}
        1 \leq CT\xi_n^2e^{4(1+\epsilon)\xi_nq'(0)(1+\nu)T}e^{-2\xi_nd_{\operatorname{agm},\delta}(\omega_x,F_\delta)}.
    \end{align}
    Thus, similarly to the classical case, we obtain
    \begin{align}
        T(\omega_x \times \Omega_y) \geq \frac{d_{\operatorname{agm}}(\omega_x)}{2q'(0)(1+\nu)} = \frac{1}{2q'(0)(1+\nu)}\int_0^a q(s) \ ds,
    \end{align}
    which concludes the proof. 
\end{proof}

\section{Extension to almost-Riemannian manifolds and open problems}\label{section: additional comments}

\subsubsection*{Extension to almost-Riemannian manifolds}

Now that we have seen all the tools, and proved the Theorems, let us develop the comments presented in Section \ref{section: comments results and literature}. \\

The study of the observability properties of the systems presented in the present paper, as of those of the non-singular Grushin equation, and related equations, studied in the references given in Section \ref{section: comments results and literature}, is part of a line of research that aims to obtain a geometric interpretation of the observability properties of sub-elliptic heat equations. Formally, given a manifold $\mathcal{M}$ of dimension $n$, a family of vector fields $X_1,...X_m$ on $\mathcal{M}$, and a measure $\mu$, one naturally associates to these a second order differential operator 
\begin{align*}
    \Delta = \sum_{i=1}^m X_i^*X_i,
\end{align*}
where $X_i^*$ is the formal adjoint of $X_i$ in $L^2(\mathcal{M},\mu)$. The operator $\Delta$ is called a sub-Laplacian. Under some particular geometric conditions (see \cite[Chapter 20]{agrachev2019comprehensive}) on the set where the vector fields become linearly dependent, one can define a canonical volume form on $\mathcal{M}$ (\textit{i.e.} using only the sub-Riemannian structure on $\mathcal{M}$), and $\Delta$ is called the Laplace-Beltrami operator. \\

It is of interest to study the observability properties of the heat equation associated to $\Delta$ under the assumption that the vector fields $X_1,...,X_m$ do not span the tangent space to $\mathcal{M}$ at every point of it, but their iterated Lie brackets do. In this case, some vector fields $X_j$, $j \in \{1,...,m\}$, may become linearly dependent. If $\operatorname{Span}(X_1,...,X_m) = T_p\mathcal{M}$ for every $p \in \mathcal{M}$, we are simply looking at the heat equation on Riemannian manifolds which is now well-understood.\\ 

As we mentioned in Section \ref{section: introduction classical operator}, our singular Grushin operator in system \eqref{stm : adjoint system classical} with $d_x  = 1$, and $\nu^2 = \left( \gamma d_y + 1 \right)^2/4$, is exactly the Laplace-Beltrami operator associated with the Riemannian volume form $dx^2 + x^{-2\gamma} \sigma$, being singular on the set $\{0\} \times \Omega_y$, where $\sigma$ is a Riemannian metric on $\Omega_y$. The non-singular operators will correspond to the sub-Laplacian, for the choice of a smooth measure up to the boundary of $\Omega$. \\

If one wants to study sub-elliptic heat equations in all generalities, it is natural that in order to obtain results, one would look at the spectral properties of the underlying sub-Laplacian. What the present paper shows, is that the sub-elliptic equation, in presence of an inverse square potential, sees its observability properties depend on the strength of the singular term, despite the fact that the localization estimates of its eigenfunctions, and its Weyl law, are independent of it. This was made clear in our settings by the use of Fourier decomposition, and study of the associated dissipation speeds, but we do not know how to extract this in all generalities, losing this tool. Also, one may observe from our results that the observability properties for the Laplace-Beltrami operator depend on the dimension of $\Omega_y$, while in the non-singular case, that is for a sub-Laplacian, it does not. \\ 

Following \cite{vanlaere2025grushinlike}, one can still however obtain observability results under suitable assumptions. Assume that $\mathcal{M}$ is a compact manifold of dimension $n+1$, and with boundary $\partial \mathcal{M}$ of dimension $n$. We endow $\mathcal{M}$ with a Riemannian metric that explodes on $\partial\mathcal{M}$, but such that the associated sub-Riemannian structure is of step $1$ on $\mathcal{M}$, and $\gamma + 1$ on $\partial\mathcal{M}$ (that is, we are Riemannian on $\mathcal{M}$, while we need $\gamma$ iterations of Lie brackets to span the tangent space at points of the boundary $\partial\mathcal{M}$, and thus the sub-Riemannian distance to $\partial \mathcal{M}$ is finite). Generically, there exists a neighborhood $\mathcal{U}$ of $\partial \mathcal{M}$, diffeomorphic to $[0,L) \times \partial \mathcal{M}$, in which the metric writes
\begin{align}
    g = dx^2 + x^{-2\gamma} \sigma(x),
\end{align}
with $\gamma \in \mathbb{N}^*$ and $\sigma(x)$ a continuous family of Riemannian metrics on $\partial\mathcal{M}$. The boundary $\partial \mathcal{M}$ is identified with $\{0\} \times \partial \mathcal{M}$. Assuming that in this neighborhood $\sigma(x) = \sigma$ does not depend on $x$, we can obtain in the spirit of \cite{vanlaere2025grushinlike} some results, observing that in $\mathcal{U}$ the Laplace-Beltrami operator $\Delta$ is exactly our Grushin operator introduced in system \eqref{stm : adjoint system classical} with $\nu^2 = (\gamma d_y + 1)^2/4$.\\

In particular, using the process of \cite[Section 5]{vanlaere2025grushinlike}, sketched in \cite[Section 2]{vanlaere2025grushinlike}, and the results presented in this paper, one can show obstruction to small time observability when $\gamma = 1$, and in any time when $\gamma > 1$, from any observation set such that $\overline{\omega} \cap \partial\mathcal{M} = \emptyset$. In the case $\gamma = 1$, one obtains from \eqref{eqn: minimal time laplace beltrami} (which is derived from Theorem \ref{thm: main theorem control classical grushin rectangle vertical}), that 
\begin{align}\label{eqn: lower bound manifold}
    T(\omega) \geq \frac{\min(L,\tilde{L})^2}{6 + 2n},
\end{align}
where $\tilde{L}$ denotes the sub-Riemannian distance from $\omega$ to $\partial\mathcal{M}$. \\

Similarly, if $\gamma > 1$, then from Theorem \ref{thm: main theorem control classical grushin rectangle vertical 2} one obtains that $T(\omega) = + \infty$.\\

In the case $\gamma = 1$, positive results in some particular setting for the observation set to intersect correctly the neighborhood $\mathcal{U}$ can also be obtained, using Theorem \ref{thm: main theorem control classical grushin rectangle vertical}, cutoff arguments, and the fact that $\Delta$ in uniformly elliptic away from $\partial\mathcal{M}$. In particular, for a certain class of observation sets, which at least satisfy $\tilde{L} \leq L$, the lower bound \eqref{eqn: lower bound manifold} is sharp, and
\begin{align}
    T(\omega) = \frac{\tilde{L}^2}{6 + 2n}.
\end{align}

In particular, and it has its importance, for the Laplace-Beltrami operator, the minimal time of observability of the heat equation posed on almost-Riemannian manifolds depends on the dimension of $\partial \mathcal{M}$, that is, on the dimension of the singularity. Thus, we conjecture that the minimal time of observability for the heat equation posed on an almost-Riemannian manifold $\mathcal{M}$ depends on the choice of a measure on $\mathcal{M}$. \\

We mention that in the setting described above, Weyl's laws were studied in \cite{de2024weyl,chitour2024weyl,boscain2016spectral}, and concentration properties of eigenfunctions in \cite{dietze2024concentration, dietze2025critical}.

\subsubsection*{Open problems}

The observation sets considered in Theorems \ref{thm: main theorem control classical grushin rectangle vertical} and \ref{thm: main theorem control classical grushin rectangle vertical 2} are vertical strips. It is therefore natural to wonder what to expect when we observe in the complementary of an horizontal strip. Namely, when $\omega = \Omega_x \times \omega_y$, with $\overline{\omega_y} \subsetneq \Omega_y$. When $d_x = d_y = 1$, and $\nu^2 = \mathsf{H} = 1/4$, we recover the classical Grushin equation on rectangular domains of two dimensions, and this analysis has been performed in the series of papers \cite{duprez2020control,darde2023null,CRMATH_2017__355_12_1215_0} (they also treat the generalized Grushin operator), and if $\Omega = \mathbb{R}^2$ in \cite{LISSYprolate2025}. We expect the approach of \cite{duprez2020control,darde2023null,CRMATH_2017__355_12_1215_0} to work in our setting in two dimensions. However, the proof is very long and technical, and relies on complex analysis arguments. But the reader may be convinced that we may expect analogous results, as the work \cite{LISSYprolate2025} naturally extends to the singular case. Directly following from the proof of \cite[Theorem 1.2]{LISSYprolate2025}, the following holds. 

\begin{theorem*}\label{thm: main theorem control classical grushin 2d cylinder horizontal}
    Consider system \eqref{stm : adjoint system classical} with $\gamma = 1$, $\nu > 0$, and $\Omega = (0,+\infty) \times \mathbb{R}$. For some $\epsilon > 0$, assume that $\omega = (0,+\infty) \times (\mathbb{R} \setminus[-\epsilon,\epsilon])$. Then we have 
    \begin{align}
        T(\omega) = + \infty.
    \end{align}
\end{theorem*}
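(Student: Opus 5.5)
We follow the strategy of \cite[Theorem 1.2]{LISSYprolate2025}, which tests the observability inequality against solutions whose $y$-frequencies are concentrated and whose $x$-profile is the ground state of the Fourier components. Since $\Omega_y = \mathbb{R}$, the Fourier decomposition of Section \ref{section: fourier decomposition} becomes a Fourier transform in $y$ with continuous frequency $\xi \in \mathbb{R}$. For each $\xi$, the component operator is $\mathcal{G}_{|\xi|}$ on the half-line, defined in \eqref{eqn: definition grushin singulier classical on R^+}. By Proposition \ref{prop: eigv and eigenf of Gn on R+}, its ground state is explicit:
\begin{align*}
\Phi_{|\xi|,0}(x) = c_\nu |\xi|^{(1+\nu)/2} x^{1/2+\nu} e^{-|\xi| x^2/2}, \qquad \lambda = 2(1+\nu)|\xi|.
\end{align*}

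We consider initial data of the form
\begin{align*}
\hat f_0(x,\xi) = \hat a(\xi)\, \Phi_{|\xi|,0}(x)
\end{align*}
with $\hat a$ supported in $\xi > 0$, and we propagate the data along the lowest mode. The solution is then
\begin{align*}
f(t,x,y) = \frac{1}{2\pi}\int_0^\infty \hat a(\xi)\, c_\nu \xi^{(1+\nu)/2} x^{1/2+\nu} e^{-\xi x^2/2 - 2(1+\nu)\xi t + i y\xi}\, d\xi .
\end{align*}
Setting $z = x^2/2 + 2(1+\nu) t - iy$, which has $\operatorname{Re} z > 0$, we get
\begin{align*}
f(t,x,y) = x^{1/2+\nu} F(z), \qquad F(z) = \int_0^\infty \hat a(\xi)\, \xi^{(1+\nu)/2} e^{-\xi z}\, d\xi .
\end{align*}
This is a Laplace transform, hence holomorphic in the right half-plane. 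The structure is exactly the one in \cite{LISSYprolate2025}, where the non-singular case gives $x e^{-\xi x^2/2 - \xi t}$. The only changes are the weight $x^{1/2+\nu}$ and the factor $2(1+\nu)$ in time; the power $\xi^{(1+\nu)/2}$ is absorbed by rescaling $\hat a$.

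Next we write both sides of the observability inequality \eqref{eqn: observability inequality Omega} in terms of $F$. By Plancherel in $y$ and orthonormality of $\Phi_{\xi,0}$, the left-hand side is
\begin{align*}
\|f(T)\|^2 = c \int_0^\infty |\hat a(\xi)|^2 e^{-4(1+\nu)\xi T}\, d\xi ,
\end{align*}
a weighted Hardy/Bergman-type norm of $F$. The right-hand side is
\begin{align*}
\int_0^T\!\!\int_0^\infty\!\!\int_{|y|>\epsilon} x^{1+2\nu} |F(x^2/2 + 2(1+\nu)t - iy)|^2\, dy\, dx\, dt .
\end{align*}
The change of variables $(x,t) \mapsto s = x^2/2 + 2(1+\nu)t$ turns it into an integral of $|F(s - iy)|^2$ against a weight in $s$ over the region $\{s > 0,\ |y| > \epsilon\}$. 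This weight is bounded above and below by powers of $s$ locally and decays for large $s$, which is where $T$ enters. Observability would therefore imply a weighted $L^2$ inequality for holomorphic functions on the half-plane: a norm of $F$ on the whole half-plane controlled by its norm on the complement of the horizontal strip $\{|\operatorname{Im} z| \le \epsilon\}$.

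Finally we disprove this inequality. As in \cite{LISSYprolate2025}, we build holomorphic functions concentrated near the segment $\{\operatorname{Im} z = 0\}$ and small away from it, using Runge-type approximation or explicit functions such as $e^{-k(z-1)^2}$ suitably truncated and realized as Laplace transforms of admissible $\hat a$ supported in $\xi > 0$. We let the concentration parameter tend to infinity; the powers $s^{\nu}$ in the weights only alter polynomial factors and cannot compensate the exponential gain. The main obstacle is this step: checking that the test functions of \cite{LISSYprolate2025} remain admissible, i.e.\ are Laplace transforms of functions with finite left-hand norm, under the modified weights $x^{1+2\nu}$ and time factor $2(1+\nu)$. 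We would first try a direct substitution $s \mapsto s/(2(1+\nu))$ to reduce exactly to the weights of \cite{LISSYprolate2025}, up to a multiplicative power of $s$ handled by Hölder's inequality. This yields $T(\omega) = +\infty$.
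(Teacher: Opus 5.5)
Your strategy is the paper's. The paper's whole argument is that the ground state of $\mathcal{G}_\xi$ is $x^{1/2+\nu}e^{-\xi x^2/2}$, with eigenvalue $2(1+\nu)\xi$, so the computations of \cite{LISSYprolate2025} go through. Your reduction (positive frequencies, $f=x^{1/2+\nu}F(z)$ with $z=x^2/2+2(1+\nu)t-iy$, and $\|f(T)\|^2=c\int_0^\infty|\hat a|^2e^{-4(1+\nu)\xi T}\,d\xi$) is a correct, more explicit version of that. Writing $T'=2(1+\nu)T$, observability for these data reads
\[
\int_{\{\operatorname{Re} z>T'\}}(\operatorname{Re} z-T')^{\nu}|F|^2\,dA\;\lesssim\;\int_{\{\operatorname{Re} z>0,\ |\operatorname{Im} z|>\epsilon\}}w(\operatorname{Re} z)\,|F|^2\,dA,
\qquad
w(s)=\frac{2^\nu\bigl(s^{1+\nu}-(s-T')_+^{1+\nu}\bigr)}{2(1+\nu)^2}.
\]
The gap is in your last step, which is where all the content lies.

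\textbf{(i) The weight grows.} $w(s)\sim 2^\nu T s^\nu$ \emph{grows}. Decay like $s^{-1/2}$ belongs to the non-singular problem on $x\in\mathbb{R}$, whose profile is $e^{-\xi x^2/2}$, not $xe^{-\xi x^2/2}$. So the observation at large $\operatorname{Re} z$, where any admissible $F$ has spread off the strip, is not killed by the weight.

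\textbf{(ii) Your explicit candidate is unusable.} $|e^{-k(z-1)^2}|=e^{-k(\operatorname{Re} z-1)^2+k(\operatorname{Im} z)^2}$ is \emph{large} off the real axis and is not square integrable on vertical lines. It is therefore neither concentrated where you need it nor the Laplace transform of an admissible $\hat a$, and no spectral truncation changes that. Runge's theorem gives uniform approximation on compact sets only. You need weighted $L^2$ smallness on the unbounded quarter-planes together with membership in the Laplace-transform class. The construction of \cite{LISSYprolate2025} rests on prolate spheroidal wave functions, and it is that family you must transfer.

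\textbf{(iii) H\"older does not help.} An unbounded multiplicative factor in $s$ cannot be removed by H\"older's inequality.

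What turns ``polynomial factors versus exponential gain'' into an argument is frequency localization. Take Lissy's family at time $T'$ and check two things. First, its spectra lie in windows $[\kappa k,Kk]$ with $\kappa>0$. Second, his ratio (observation over final-state norm) is $o(k^{-\nu-1/2})$. With these, the comparison goes as follows.
\begin{enumerate}
\item[(a)] On the window, $\xi^{-(1+\nu)}\ge (Kk)^{-\nu-1/2}\xi^{-1/2}$, which compares your final-state norm with his.
\item[(b)] Let $w_0(s)=2\sqrt2\bigl(\sqrt{s}-\sqrt{(s-T')_+}\bigr)$ be the weight of the non-singular problem on $x\in\mathbb{R}$ at time $T'$. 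Then $w(s)\le C(1+s)^{\nu+1/2}w_0(s)$, which compares the observation terms.
\item[(c)] On $\{\operatorname{Re} z\le S_1\}$ the factor $(1+s)^{\nu+1/2}$ is bounded.
\item[(d)] On $\{\operatorname{Re} z>S_1\}$ with $S_1>T'$, the bound $\|F(s-i\cdot)\|^2_{L^2}\le e^{-2\kappa k(s-T')}\|F(T'-i\cdot)\|^2_{L^2}$ makes that part exponentially small relative to the final-state norm.
\end{enumerate}
The spectral lower bound also ensures $f_0\in L^2(\Omega)$, i.e.\ $\int|\hat b|^2\xi^{-(1+\nu)}\,d\xi<\infty$, which can fail when the spectrum reaches $\xi=0$. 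This is the verification the paper summarizes by ``the computations still hold''.
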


This is due to the fact the the first eigenfunction of the singular operator $\mathcal{G}_\xi$ on the half-line is of the form 
\begin{align*}
    \Phi_{\xi,0} = x^{\frac{1}{2} + \nu}e^{-\xi x^2/2},
\end{align*}
and thus similar to the one of the Harmonic oscillator (see Section \ref{section: heat kernel computation}). Thus, the computations in \cite{LISSYprolate2025} still hold. \\

We did not give the minimal time for the generalized operator, and Theorem \ref{thm: main theorem control generalized grushin rectangle vertical} is incomplete, but we expect the lower bound to be optimal. In this case, one cannot use the Carleman weight of \cite[Section 3]{beauchard2020minimal}, as it is not well-suited to deal with the singular term. However, we expect a weight of the form 
\begin{align*}
    \theta(t) \int_x^L q(s) \ ds
\end{align*}
to work. One shall probably assume that $q$ satisfies \ref{assumption: q} with $\gamma = 1$, and additionally to that, $q'(x) > 0$ for every $x \in [0,L]$ for technical issues in the computations. Observe that if $q(x) = x$, this is of the form of the classical Carleman weight \eqref{eqn: carleman weight grushin classic} and those presented in Appendix \ref{section: appendix The cost of small time observability from interior subsets}. Without the monotony assumption $q'(x) > 0$, it is still an open problem to compute the minimal time of observability. \\

For the generalized equation, we expect $T(\omega) = + \infty$ for system \eqref{stm : adjoint system generalized} if $q$ satisfies \ref{assumption: q} with $\gamma > 1$, as in \cite[Theorem 2.2]{vanlaere2025grushinlike}. The only missing ingredient is for Proposition \ref{prop: eigenvalue generalized operator} to hold in this case (replacing $\sqrt{\xi}$ by $\xi^{1/(\gamma + 1)}$), which would require a precise understanding of the eigenfunctions of the classical operators. Namely, how do they behave in the sets $(0,\xi^{1/(\gamma+1)})$. \\

It would also be interesting to see if $T(\omega_x \times \Omega_y) = 0$ for system \eqref{stm : adjoint system classical} when $\gamma \in (0,1)$, analogously to the non-singular case. \\

Finally, consider system \eqref{stm : adjoint system classical} with $d_x = d_y = 1$, and $\Omega_x = (-L_-,L_+)$, $L_-,L_+ > 0$. It would be interesting to obtain some results in this case too. Approximate controllability was proved in \cite{morancey2015approximate}, but observability (or null-controllability) is still an open problem. We do not expect this to be easy to solve. One shall start by proving that for each $\xi_n \geq 0$, the one-dimensional heat \eqref{stm : adjoint system generalized fourier} is observable. When $\xi_n = 0$, this has been proved in the recent paper \cite{lissy2025null}.

\section*{Acknowledgments}

The author would like to thank his PhD advisors P. Lissy and D. Prandi for their corrections, suggestions, and careful reading of this work. The author would also like to thank S. Ervedoza for discussions concerning the use of the heat kernel in Carleman estimates, and M. Trabut for discussions about her upcoming work \cite{DardeTrabut2025}. 

\appendix

\section{The cost of small time observability from interior subsets}\label{section: appendix The cost of small time observability from interior subsets}

We recall that the following section is motivated by the discussion of Section \ref{section : sketch of proof carleman} to get rid of the geometric restriction on $\omega_x$ in Theorem \ref{thm: main theorem control classical grushin rectangle vertical}. It also provides an alternative proof of the upper bound in Theorem \ref{thm: main theorem control classical grushin rectangle vertical}, at least in the case $d_x = 1$, without having to rely on the equivalent control systems. However, we still rely on boundary Carleman estimates and use of cutoffs, as we will explain later. \\

Following the strategy of proof of boundary observability used in the proof of Theorem \ref{thm: main theorem control classical grushin rectangle vertical} in Section \ref{section: proof positive result}, Proposition \ref{prop: appendix cost of small time observability} below yields a sharp upper bound on the minimal time of internal observability in Theorem \ref{thm: main theorem control classical grushin rectangle vertical} in the case $d_x = 1$. \\

We thus study the cost of small time internal observability of 
\begin{align}\label{stm : appendix adjoint system classical d=1 fourier}
    \left\{ \begin{array}{lcll}
     \partial_t f -\partial_x^2f + \xi^2x^2 f + \displaystyle\frac{\nu^2 - 1/4}{x^2} f & = & 0, & (t,x) \in (0,T) \times (0,L),\\[6pt]
     f(t,x) & = & 0, & t \in (0,T), \ x \in \{0,L\}, \\[6pt]
     f(0,x) & = & f_0(x), & x \in (0,L),
    \end{array} \right.
\end{align}
with $f_0 \in L^2(0,L)$, in the limit $\xi \rightarrow + \infty$. \\

We have the following result, analogous to Proposition \ref{prop: cost of boundary observability} for internal observation sets.

\begin{proposition}\label{prop: appendix cost of small time observability}
    Let $0 < a < b \leq L$. For every $T > 0$, for every $\epsilon > 0$ and $\delta > 0$ small enough, there exist $\xi_0 > 0$ and a constant $C>0$, such that for every $\xi \geq \xi_0$, for every solution $f$ of system \eqref{stm : appendix adjoint system classical d=1 fourier}, we have
    \begin{align}\label{eqn: appendix cost of small time observability}
        \int_0^L |f(T,x)|^2  \ dx \leq C \left( e^{\frac{\xi}{\sqrt{1-3\delta/2}}(a+\epsilon/2)^2} + e^{2\epsilon\xi}\right) \int_0^T \int_a^{b} |f(t,x)|^2 \ dx \ dt.
    \end{align}
\end{proposition}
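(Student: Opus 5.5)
The plan is to reduce interior observability from $(a,b)$ to boundary observability on a subinterval, by a cutoff argument, and then to apply a Carleman estimate of the type of Proposition \ref{prop: boundary carleman classical} on $(0,a+\epsilon/2)$. Concretely, fix $\epsilon>0$ small with $a+\epsilon<b$. Take a cutoff $\chi\in C^\infty([0,L])$ with $\chi=1$ on $[0,a+\epsilon/4]$, $\chi=0$ on $[a+3\epsilon/4,L]$, and $|\chi'|+|\chi''|\lesssim \epsilon^{-2}$. Then $h=\chi f$ solves the Fourier system \eqref{stm : classical grushin nonhomog generic parameter} on $(0,a+\epsilon)$ with Dirichlet conditions and source
\[
f_\xi=-2\chi' \partial_x f-\chi'' f ,
\]
which is supported in $(a,b)$. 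In the same way, $(1-\chi)f$ solves a heat-type equation on $(a,L)$, away from the singularity and the degeneracy, with a source supported in $(a,b)$.

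For the inner piece, the idea is to use the conjugated identity from the proof of Proposition \ref{prop: boundary carleman classical}. Rather than taking the weight $\frac{\xi}{2}((a+\epsilon)^2-x^2)\coth(2\xi t)$ directly, I would use a slightly modified weight with a small parameter $\delta$, for instance
\[
\varphi(t,x)=\frac{\xi}{2\sqrt{1-3\delta/2}}\bigl((a+\epsilon/2)^2-x^2\bigr)\coth(2\xi t),
\]
or an equivalent time rescaling. This gives room to absorb lower-order error terms. The Carleman estimate then yields
\[
\|h(T)\|^2\lesssim \xi\, e^{2\sup \varphi(T)}\int_0^T\!\!\int |f_\xi|^2e^{-2\varphi}.
\]
On the support of $f_\xi$ the weight $e^{-2\varphi}$ is at most $1$ up to the factor coming from the sign change of $(a+\epsilon/2)^2-x^2$. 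Away from $t$ near $0$ it is bounded, and near $t=0$ it is controlled, because $\coth$ blows up with the favorable sign only where $x>a+\epsilon/2$. Where $x<a+\epsilon/2$, $e^{-2\varphi}\le 1$. Where $x>a+\epsilon/2$, $e^{-2\varphi}\le e^{\xi C\epsilon\coth}$, and this is where the $e^{2\epsilon\xi}$ term should come from once $\coth(2\xi t)\le 2$ for $t$ not too small. The contribution from very small $t$ can be handled by also placing a time cutoff and using dissipation of the energy. Together with $\coth(2\xi T)\to1$, this gives the factor $e^{\xi(a+\epsilon/2)^2/\sqrt{1-3\delta/2}}$ for $\|h(T)\|^2$.

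The source involves $\partial_x f$ on $(a,b)$. To remove it, I would use a Caccioppoli estimate for the Fourier equation on $(a,b)$. There the potentials are bounded but of size $\xi^2 x^2$, so the estimate loses at most polynomial powers of $\xi$, which are absorbed into the exponentials by slightly enlarging $\delta$. The outer piece $(1-\chi)f$ lives where $\xi^2x^2\ge \xi^2a^2$, so the operator is strongly elliptic there. Energy estimates then give
\[
\|(1-\chi)f(T)\|^2\lesssim e^{2\epsilon\xi}\int_0^T\!\!\int_a^b|f|^2 ,
\]
even with a decaying factor, since the source is confined to $(a,b)$.

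The main obstacle is the inner Carleman step. The singular term $(\nu^2-1/4)/x^2$ must be handled exactly as in Proposition \ref{prop: boundary carleman classical}, whose proof uses the specific $|x|^2$ structure of the weight. Any $\delta$-modification (or a shifted center) must keep the algebraic cancellations $I_{ij}$ and the identity $\theta''=-4\theta\theta'$. I would first try the time reparametrization $\theta(t)=\xi\kappa\coth(2\xi\kappa t)$ with $\kappa=(1-3\delta/2)^{-1/2}$. I would then check that the positive term $D(T)$ still dominates, using Hardy's inequality and the fact that $\xi^2/\sinh^2$ vanishes, so that the extra commutator terms produce only the $\delta$-loss appearing in \eqref{eqn: appendix cost of small time observability}.
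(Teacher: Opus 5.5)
\textbf{There is a genuine gap, in the step you treat as routine: the outer piece.} For $w=(1-\chi)f$ on $(a,L)$, the energy identity only gives
\[
\|w(T)\|^2\le e^{-c\xi^2T}\|w(0)\|^2+\frac{C}{\xi^2}\int_0^T\|2\chi'\partial_xf+\chi''f\|^2\,dt .
\]
The initial-data term cannot be bounded by $\int_0^T\int_a^b|f|^2$, however small its prefactor. Indeed, fix $\xi$ and take $f_0$ to be a high eigenmode of $G_\xi$: the observation is at most $1/(2\lambda_k)$, while $\|(1-\chi)f_0\|$ stays bounded below.

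Removing $w(0)$ with a time cutoff $\tau$ does not help. It creates the source $\tau'w$, supported on all of $(a,L)$, in particular on $(b,L)$, which is not observed when $b<L$. Energy estimates cannot carry information from $(a,b)$ into $(b,L)$. You need a quantitative unique continuation there, that is, a Carleman estimate on $(a,L)$ whose weight blows up in time.

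This is the role in the paper of Proposition~\ref{prop: appendix carleman beauchard} with the weight $\psi_\xi=A\xi\theta+\theta-\sqrt{\xi}\,\theta\bigl(\tfrac{(x-a)^2}{2}-2L(x-a)\bigr)$. It yields \eqref{eqn: appendix cutoff inequality away sing} on a middle time interval. The factor $e^{2\epsilon\xi}$ comes from the small parameter $A$ in that weight, not from the region $x>a+\epsilon/2$ of the inner piece as you suggest. The two middle-time bounds are then glued via $\|f(T)\|^2\le\frac{5}{T}\int_{2T/5}^{3T/5}\|f(t)\|^2\,dt$. Only when $b=L$ does your energy argument with a time cutoff close.

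\textbf{The inner step is also wrong as written, though repairable.} You center the weight at $a+\epsilon/2$ while $\operatorname{supp}\chi'$ straddles $a+\epsilon/2$. Then on $\operatorname{supp}\chi'\cap\{x>a+\epsilon/2\}$ one has $e^{-2\varphi}\approx e^{c(x)/t}$ with $c(x)>0$. Hence $\int_0^T\int|f_\xi|^2e^{-2\varphi}=\infty$ for generic data. The time-cutoff/dissipation remedy fails for the same reason as above: the cutoff term lives on the unobserved $(0,a)$, and dissipation only bounds later norms by earlier ones.

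The fix is to center the weight at the right end of the inner domain, as in Proposition~\ref{prop: boundary carleman classical} where $L=\sup|x|$. Work on $(0,a+\epsilon/2)$ with $\chi$ changing inside $(a,a+\epsilon/2)$. Then:
\begin{itemize}
\item $e^{-2\varphi}\le 1$ on the source and vanishes to infinite order at $t=0$, which also lets a weighted Caccioppoli estimate work with only polynomial loss;
\item the boundary term vanishes;
\item the argument of Proposition~\ref{prop: cost of boundary observability} gives $\|\chi f(T)\|^2\lesssim \xi^{k}e^{\xi(a+\epsilon/2)^2}\int_0^T\int_a^b|f|^2$, already stronger than the inner part of the statement.
\end{itemize}

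Drop the rescaling $\kappa=(1-3\delta/2)^{-1/2}$. Since the potential is $\xi^2x^2$ and not $\kappa^2\xi^2x^2$, it leaves $-(\kappa^2-1)\xi^2\int x^2|g(T)|^2$ in the final-time energy $D(T)$. For large $\xi$, $\int|\partial_x g(T)|^2$ cannot control this term.

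Repaired this way, your final-time treatment of the inner piece is a legitimate and slightly sharper alternative to Proposition~\ref{prop: appendix carleman near sing}. In the paper, the two-sided $\theta$ and the requirement $M\ge \xi/\sqrt{1-3\delta/2}$ are what produce the $\delta$ in the statement. The outer piece, however, still needs a Carleman estimate.
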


Let us fix some time $T > 0$ and an observation set $(a,b)$, with $0<a<b\leq L$. The proof of Proposition \ref{prop: appendix cost of small time observability} relies on the following two inequalities that we prove later: for every $T > 0$, for every $\epsilon > 0$ and $\delta > 0$, there exists $\xi_0 > 0$ and a constant $C>0$, such that for every $\xi \geq \xi_0$, for every solution $f$ of system \eqref{stm : appendix adjoint system classical d=1 fourier}, we have
    \begin{align}
          \int_{2T/5}^{3T/5} \int_0^{a+\epsilon/2} |f(t,x)|^2  \ dx \ dt \quad &\leq \quad C e^{\frac{\xi}{\sqrt{1-3\delta/2}}(a+\epsilon/2)^2}\int_0^T \int_a^{b} |f(t,x)|^2 \ dx \ dt, \label{eqn: appendix cutoff inequality near sing}    \\[8pt]
           \int_{T/3}^{2T/3} \int_{a+\epsilon/2}^L  |f(t,x)|^2 \ dx \ dt \quad &\leq \quad Ce^{2\epsilon\xi} \int_0^T \int_a^b |f(t,x)|^2 \ dx \ dt. \label{eqn: appendix cutoff inequality away sing}
    \end{align}

\begin{remark}
    Inequality \eqref{eqn: appendix cutoff inequality away sing} is obviously non-optimal when $b = L$, and in this case only \eqref{eqn: appendix cutoff inequality near sing} needs to be proved. Indeed, one simply replaces \eqref{eqn: appendix cutoff inequality away sing} by the trivial inequality
    \begin{align*}
        \int_{2T/5}^{3T/5} \int_a^L  |f|^2 \ dx \ dt \quad &\leq \quad \int_0^T \int_a^L |f|^2 \ dx \ dt.
    \end{align*}
\end{remark}

Let us sketch the strategy to prove \eqref{eqn: appendix cutoff inequality near sing} and \eqref{eqn: appendix cutoff inequality away sing}. Let $\epsilon > 0$ be such that $a+\epsilon < b$. Set $\chi_1$ and $\chi_2$ to be  two smooth cutoffs such that (see Figure \ref{fig: graph_cutoff})
\begin{enumerate}[label = (\roman*)]
    \item $\chi_1 = 1$ on $(0,a+\epsilon/2)$, $\chi_1 = 0$ on $(a+3\epsilon/4,L)$, and $0 \leq \chi_1 \leq 1$, 
    \item $\chi_2 = 1$ on $(a+\epsilon/2,L)$, $\chi_2 = 0$ on $(0,a+\epsilon/4)$, and $0 \leq \chi_2 \leq 1$. 
\end{enumerate}
 
\begin{figure}[H]
    \centering
    \includegraphics[width=0.5\linewidth]{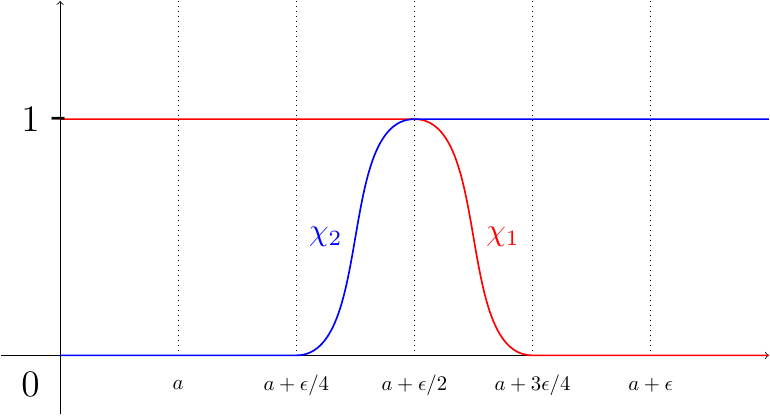}
    \caption{Exhaustive graph of the cutoffs $\chi_1$ and $\chi_2$.}
    \label{fig: graph_cutoff}
\end{figure}

For a solution $f$ of system \eqref{stm : appendix adjoint system classical d=1 fourier}, we  set $f_i = \chi_i f$, $i  = 1,2$. \\

We will apply to $f_1$ a Carleman estimate (see Proposition \ref{prop: appendix carleman near sing}) near the singularity, and inferring \eqref{eqn: appendix cutoff inequality near sing}, and another one to $f_2$ (see Proposition \ref{prop: appendix carleman beauchard}), away from the singularity and yielding \eqref{eqn: appendix cutoff inequality away sing}. Gluing these estimates together proves Proposition \ref{prop: appendix cost of small time observability} as follows. Our strategy is classical (see, e.g., \cite[Section 3.2]{beauchard2020minimal} and \cite[Section III]{vancostenoble2008null}).

\begin{proof}[Proof of Proposition \ref{prop: appendix cost of small time observability}]
    Let $\epsilon > 0$ and $\delta > 0$ be arbitrary small. Let also $\xi$ be sufficiently large to use \eqref{eqn: appendix cutoff inequality near sing} and \eqref{eqn: appendix cutoff inequality away sing}. Summing \eqref{eqn: appendix cutoff inequality near sing} with \eqref{eqn: appendix cutoff inequality away sing}, we obtain, for any solution $f$ of system \eqref{stm : appendix adjoint system classical d=1 fourier},
    \begin{align}\label{eqn: appendix cost integral proof main prop}
        \int_{2T/5}^{3T/5} \int_0^L |f(t,x)|^2  \ dx \ dt \quad &\leq \quad C \left( e^{\frac{\xi}{\sqrt{1-3\delta/2}}(a+\epsilon/2)^2} + e^{2\epsilon\xi}\right) \int_0^T \int_a^{b} |f(t,x)|^2 \ dx \ dt.
    \end{align}
    Using the dissipativity of the solutions of system \eqref{stm : appendix adjoint system classical d=1 fourier}, we have
    \begin{align}\label{eqn: appendix dissipativity proof main prop}
        \|f(T)\|^2_{L^2(0,L)} \leq \|f(3T/5)\|^2_{L^2(0,L)} \leq \frac{5}{T} \int_{2T/5}^{3T/5} \int_0^L |f(t,x)|^2  \ dx \ dt.
    \end{align}
    Combining \eqref{eqn: appendix cost integral proof main prop} and \eqref{eqn: appendix dissipativity proof main prop} proves the Proposition.
\end{proof}

The Carleman estimates in the rest of this section are typically used to establish boundary observability, meaning they produce observation terms at the boundary rather than in the interior. In our approach, internal observation terms arise from the cutoffs, which also explains the absence of boundary terms. We rely on both estimates near and away from the singularity because we do not know how to construct, for an arbitrary observation set $\omega_x$, a Carleman weight on the whole domain $\Omega_x$ that produces an interior observation in $\omega_x$ with optimal cost $d$ in \eqref{eqn: cost of observability sketch of proof}. When generalized in higher dimensions, this cutoff-based strategy also justifies the geometric restriction on $\omega_x$ in Theorem \ref{thm: main theorem control classical grushin rectangle vertical}. Although when $\omega_x = \{ r < |x| \leq R \}$ the generalization of our computations are not complicated, in particular when $\Omega_x = B(0,L)$, for arbitrary geometries for $\omega_x$ and/or $\Omega_x$ it can become very tedious.

\subsubsection*{Analysis near the singularity: proof of \eqref{eqn: appendix cutoff inequality near sing}}

We consider system \eqref{stm : appendix adjoint system classical d=1 fourier} with a source term. Let $T > 0$,
\begin{align}\label{stm : appendix adjoint system classical d=1 fourier source}
    \left\{ \begin{array}{lcll}
     \partial_t f -\partial_x^2f + \xi^2x^2 f + \displaystyle\frac{\nu^2 - 1/4}{x^2} f & = & f_\xi, & (t,x) \in (0,T) \times (0,\tilde{L}),\\[6pt]
     f(t,x) & = & 0, & t \in (0,T), \ x \in \{0,\tilde{L}\}, \\[6pt]
     f(0,x) & = & f_0(x), & x \in (0,\tilde{L}).
    \end{array} \right.
\end{align}

System \eqref{stm : appendix adjoint system classical d=1 fourier source} is aimed at being considered with $\tilde{L} = a + \epsilon$.\\

We first introduce a Carleman estimate for the solutions of this system. Define the Carleman weight
\begin{align}\label{eqn: appendix def carleman weight near sing}
    \psi(t,x) = \frac{M\theta(t)}{2}(L^2 - x^2),
\end{align}
where $M > 0$ is aimed at being taken large, and $\theta(t)$ is defined just below in \eqref{eqn: appendix def theta near sing}. \\

Let $\eta_i \in C^\infty([0,1];[0,1])$, $i = 1,2,3$, be such that (see Figure \ref{fig:partition_unity}) $\eta_1(s) + \eta_2(s) + \eta_3(s) = 1$, and $\eta_1(s) = 1$ on $(0,1/5)$, $\eta_2(s) = 1$ on $(2/5,3/5)$, $\eta_3(s)=1$ on $(4/5,1)$. We set, for $k > 1$ to be chosen later, 
\begin{align}\label{eqn: appendix def theta near sing}
    \theta(t) = \tilde{\theta}(t/T), \quad \tilde{\theta}(s) = \frac{\eta_1(s)}{s^k} + \eta_2(s) + \frac{\eta_3(s)}{(1-s)^k}. 
\end{align}

\begin{figure}[H]
    \centering
    \includegraphics[width=0.5\linewidth]{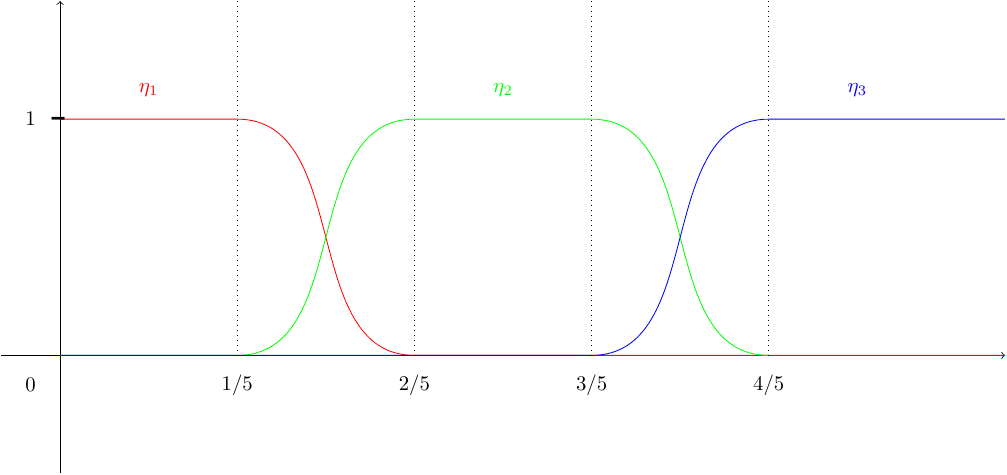}
    \caption{Exhaustive graphs of the functions $\eta_i$.}
    \label{fig:partition_unity}
\end{figure}

We need the following Carleman estimate, that we prove later in Section \ref{section: appendix proof of proposition}.

\begin{proposition}\label{prop: appendix carleman near sing}
    For every $T > 0$, for every $\delta > 0$ small enough, there exists a constant $\xi_0 > 0$ depending on $T$ and $\delta$, such that for every $\xi \geq \xi_0$, for every solution $f$ of system \eqref{stm : appendix adjoint system classical d=1 fourier source}, setting $g = fe^{-\psi}$, with $\psi$ defined in \eqref{eqn: appendix def carleman weight near sing}, we have for every $M \geq \xi/\sqrt{1-3\delta/2}$,
    \begin{align}
        \int_Q &\left[ M\theta \left( |\partial_xg|^2 + \frac{c(\nu)}{x^2}|g|^2\right) + \frac{M\theta}{2} \frac{|g|^2}{x^\eta} + \delta M^3\theta^3x^2g^2 \right] \ dQ \notag \\
        &\leq \int_Q |f_\xi|^2e^{-2\psi} \ dQ + \int_0^T ML\theta(t)|g_x(t,L)|^2 \ dt,
    \end{align}
    where we set $Q := (0,T) \times (0,L)$, $dQ := dxdt$.
\end{proposition}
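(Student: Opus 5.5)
The plan is to follow the multiplier computation of Proposition \ref{prop: boundary carleman classical}, now with a general time weight $\theta$ and a free parameter $M$ in place of $\xi$.

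\textbf{Conjugation and splitting.} Write $f = g e^{\psi}$ with $\psi = \frac{M\theta}{2}(L^2-x^2)$, so $\partial_x\psi = -M\theta x$ and $\partial_t\psi = \frac{M\theta'}{2}(L^2-x^2)$. The conjugated operator $P_\psi g = e^{-\psi}(\partial_t - \partial_x^2 + \xi^2x^2 + (\nu^2-1/4)/x^2)(ge^{\psi})$ is then split into a symmetric part and an antisymmetric part:
\begin{align*}
\mathsf{P}_1 g &= -\partial_x^2 g + \left(\xi^2 - M^2\theta^2\right)x^2 g + \frac{\nu^2-1/4}{x^2}g - \frac{M\theta'}{2}(L^2-x^2)g,\\
\mathsf{P}_2 g &= \partial_t g + 2M\theta x\partial_x g + M\theta g .
\end{align*}
We use $\|P_\psi g\|^2 \geq 2\int_Q \mathsf{P}_1 g\,\mathsf{P}_2 g$ and compute the cross terms $I_{ij}$ by integration by parts in $x$ and $t$. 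Since $\theta$ blows up like $t^{-k}$ and $(T-t)^{-k}$ at the endpoints, $g$ vanishes at $t=0$ and $t=T$, so all time-boundary terms disappear. The cross terms are handled as follows.
\begin{itemize}
\item The term $\int -\partial_x^2 g\,(2M\theta x\partial_x g + M\theta g)$ produces $M\theta\int|\partial_x g|^2$ and the boundary term $-M\theta L|g_x(t,L)|^2$. At $x=0$ there is no boundary contribution, because $g \sim x^{1/2+\nu}$, which is the source of the observation term on the right.
\item The potential term with the multiplier $2M\theta x\partial_x$ gives $2M\theta\int (\nu^2-1/4)x^{-2}g^2$, since $-\partial_x(x\cdot x^{-2}) = x^{-2}$. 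After adding the $M\theta g$ multiplier, the total is $M\theta\int(2-1)(\nu^2-1/4)x^{-2}g^2$.
\item The term $(\xi^2-M^2\theta^2)x^2 g$ against $2M\theta x\partial_x g$ gives $-3M\theta(\xi^2 - M^2\theta^2)\int x^2g^2$. Combining with the $M\theta g$ multiplier yields $2M\theta(M^2\theta^2-\xi^2)\int x^2 g^2$, up to the $\partial_t$ contribution $-\tfrac12\int\partial_t(\cdot)$, which brings in $M^2\theta\theta'x^2$.
\end{itemize}

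\textbf{Coercivity.} We arrange the terms into the left-hand side of the statement. The condition $M \geq \xi/\sqrt{1-3\delta/2}$ together with $\theta \geq 1$ gives $\xi^2 \leq (1-3\delta/2)M^2\theta^2$. This makes the $M^3\theta^3x^2g^2$ coefficient at least of order $\delta M^3\theta^3$ after absorbing the remaining errors.

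There are two error families.
\begin{itemize}
\item The terms involving $\theta'$ and $\theta''$, namely $M\theta''(L^2-x^2)g^2$ and $M^2\theta\theta' x^2g^2$. Using $|\theta'| \lesssim \theta^{1+1/k}$ and $|\theta''| \lesssim \theta^{1+2/k}$, these are absorbed by $\delta M^3\theta^3x^2g^2$ and $M\theta|\partial_xg|^2$ for $\xi$ (hence $M$) large. Near $x=0$, where $x^2$ degenerates, we absorb instead with the Hardy-type term.
\item The singular part. The constant $c(\nu)$ and the extra $\frac{M\theta}{2}x^{-\eta}g^2$ come from splitting $M\theta(|\partial_xg|^2 + (\nu^2-1/4)x^{-2}g^2)$ using Hardy's inequality \eqref{eqn: hardy inequality} with $\mathsf{H}=1/4$. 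Part of the gradient is kept for the $\nu$-dependent Hardy term $c(\nu)x^{-2}$. An improved Hardy inequality (Vázquez–Zuazua type, $x^{-\eta}$ with $\eta<2$) yields the $x^{-\eta}$ term, which also dominates the $\theta'$ errors near $0$, since $L^2 x^{-\eta}$ beats a bounded weight once $M\theta$ exceeds $\theta^{1+2/k}$-type errors. This requires choosing $k$ suitably, so that $\theta^{2/k}$-growth is absorbed via the $M^3\theta^3$ term.
\end{itemize}

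\textbf{Main obstacle.} The delicate step is the second error family: the $M\theta''L^2g^2$ error near $x=0$ has no $x^2$ factor. I would handle it by interpolating between the $x^{-\eta}$ term and the $M^3\theta^3x^2$ term, writing $g^2 \leq \epsilon' x^{-\eta}g^2 + C_{\epsilon'}x^2g^2$ pointwise (valid since $x^{-\eta}+x^2 \geq c$). This requires $M\theta^{1+2/k} \ll M\theta$ near the region where $x^{-\eta}$ is used, which fails as $\theta\to\infty$. So I would instead use $M^3\theta^3 \gg M\theta^{1+2/k}$ for the $x^2$ part and $\theta'' $-smallness in the middle region. The choice of $k$ and the precise cutoff in $x$ are where I expect the bookkeeping to be hardest.
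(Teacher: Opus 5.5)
You follow the paper's scheme: the same conjugation and splitting, the same multiplier identity, the improved Hardy--Poincar\'e inequality $\int_0^L (g_x^2-\frac{g^2}{4x^2})\,dx\ge \int_0^L x^{-\eta}g^2\,dx - C_0\int_0^L g^2\,dx$, and the condition $M\ge \xi/\sqrt{1-3\delta/2}$ to make $2M\theta(M^2\theta^2-\xi^2)x^2g^2$ coercive. However, the step you call the main obstacle is left open, and the fix you sketch does not work.

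\textbf{The gap.} The term $\frac{M}{4}|\theta''|(L^2-x^2)g^2\lesssim M\theta^{1+2/k}g^2$ (and similarly $C_0M\theta g^2$) is dangerous near $x=0$ \emph{and} near $t=0,T$. That is exactly where $\theta''$ is largest, so ``$\theta''$-smallness in the middle region'' buys nothing. As you note yourself, a split $g^2\le \epsilon' x^{-\eta}g^2+C_{\epsilon'}x^2g^2$ with $\epsilon'$ independent of $t$ cannot control the time weight $\theta^{1+2/k}$ using the $x^{-\eta}$ term, which only carries the weight $\theta$.

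\textbf{The missing idea.} Let the splitting depend on $\theta(t)$, and tie $k$ to $\eta$. Take $k=1+2/\eta$ and factor
\begin{align*}
\theta^{1+2/k}g^2=\big(\theta^{3/k}x^{2/k}|g|^{2/k}\big)\big(\theta^{(k-1)/k}x^{-2/k}|g|^{2(k-1)/k}\big).
\end{align*}
Young's inequality with exponents $q=k$, $q'=k/(k-1)$ then gives, pointwise,
\begin{align*}
\theta^{1+2/k}g^2\le C_\epsilon\,\theta^3x^2g^2+\epsilon\,\theta\,x^{-\eta}g^2 .
\end{align*}
Equivalently, split at $x_0(t)\sim\theta(t)^{-2/(k\eta)}$; this is affordable precisely when $k\ge 1+2/\eta$. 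The first piece carries only a factor $M$, so it is absorbed by $M^3\theta^3x^2g^2$ for $M$ large. The second piece is absorbed by half of $M\theta x^{-\eta}g^2$, which is where the $\frac{M\theta}{2}x^{-\eta}g^2$ in the statement comes from. Without this time-dependent interpolation the estimate does not close.

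\textbf{Bookkeeping.} Please also recheck your multiplier computation.
\begin{itemize}
\item $-\partial_x^2g$ against $2M\theta x\partial_xg+M\theta g$ gives $2M\theta\int|\partial_xg|^2$, not $M\theta\int|\partial_xg|^2$.
\item The potential term contributes $M\theta c(\nu)\int x^{-2}g^2$ from \emph{each} of the two multipliers. The total is therefore $2M\theta c(\nu)\int x^{-2}g^2$, not $(2-1)M\theta c(\nu)\int x^{-2}g^2$.
\item $\psi_t$ enters $\mathsf{P}_1$ with a plus sign.
\end{itemize}
The factor $2$ matters. It lets you keep a full copy of $M\theta\big(|\partial_xg|^2+c(\nu)x^{-2}g^2\big)$ on the left-hand side while spending the other copy in the improved Hardy inequality.
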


Observe that our Carleman weight \eqref{eqn: appendix def carleman weight near sing} is very similar to the Carleman weight \eqref{eqn: carleman weight grushin classic}. However, we cannot use $\theta(t) = \coth(2\xi t)$ since this will fail to produce an estimate with an integral term on the left-hand side. Observe also that our Carleman estimate is very close to the one given in \cite[Proposition 5]{cannarsa2014null}, as we shall follow their proof. The main difference is the choice of $\theta(t)$. The function $\theta(t)$ has a strong influence on the optimality of the cost of observability that we can derive from Proposition \ref{prop: appendix carleman near sing}. This is why we ask for $\theta$ to value one on some sub-interval of $(0,T)$, and that we need to track the constants in the computations. In particular, it is of importance that to obtain an optimal cost of observability we are allowed to take $M$ as close as possible to $\xi$. To do that, we have to pay the price that our Carleman estimate does not cover all the modes $\xi$ as in \cite[Proposition 5]{cannarsa2014null} (although it is not an issue to deduce Theorem \ref{thm: main theorem control classical grushin rectangle vertical} from it). \\

We also emphasize that this weight is also similar, naturally, to the one of \cite[Theorem 3.2]{vancostenoble2008null}. However, we cannot use the Carleman estimate of \cite[Theorem 3.2]{vancostenoble2008null} as a black-box, in which case we should treat $\xi^2 x^2f$ as a source term, and we would lose the optimality of the cost. We need to consider the potential $\xi^2 x^2$ in the principal part of the operator. \\

Let us now prove \eqref{eqn: appendix cutoff inequality near sing} using Proposition \ref{prop: appendix carleman near sing}. Observe that since $f$ solves system \eqref{stm : appendix adjoint system classical d=1 fourier}, then $f_1 = \chi_1 f$ solves system \eqref{stm : appendix adjoint system classical d=1 fourier source} with $\tilde{L} = a+\epsilon$ and with a source term 
\begin{align*}
    f_\xi = - \partial_x^2\chi_1f  - 2 \partial_x\chi_1\partial_xf.
\end{align*}
Choose some $\delta > 0$ small. We apply Proposition \ref{prop: appendix carleman near sing} to $f_1$, for every $\xi \geq \xi_0$, with $M = \xi/\sqrt{1-3\delta/2}$, and by denoting $g_1 = f_1e^{-\psi}$. Since the boundary term at $x = a+\epsilon$ vanishes, we obtain
    \begin{align*}
        \int_Q \frac{M\theta}{2} \frac{g_1^2}{x^\eta}  \ dQ \leq \int_Q |-\partial_x^2\chi_1f  - 2 \partial_x\chi_1\partial_xf|^2 e^{-2\psi} \ dQ.
    \end{align*}
By definition of $\chi_1$, by using the fact that $\theta(t) = 1$ on $(2T/5,3T/5)$, and by triangular inequality, this implies 
    \begin{align*}
        \frac{M(a+\epsilon/2)^\eta}{2} \int_{2T/5}^{3T/5} \int_0^{a+\epsilon/2} f^2e^{-M\left((a+\epsilon/2)^2 - x^2 \right)}  \ dx \ dt \leq C\int_0^T \int_{a+\epsilon/2}^{a+3\epsilon/4} |f|^2e^{-2\psi} + |\partial_x f|^2 e^{-2\psi} \ dx \ dt.
    \end{align*}
The integral involving the first order term on the right-hand side can be bounded from above using Cacciopoli's inequality (see \textit{e.g.} \cite[Lemma 2.9]{ervedoza2008control}) since $\psi > 0$ diverges as $t \rightarrow 0,T$,
\begin{align*}
    \int_0^T \int_{a+\epsilon/2}^{a+3\epsilon/4} |\partial_x f|^2 e^{-2\psi}  \ dx \ dt \leq c(\epsilon) \int_0^T \int_a^{a+\epsilon} |f|^2 \ dx \ dt.
\end{align*}

Since the weight $\psi$ is positive, and using that
\begin{align*}
    e^{-M\left((a+\epsilon/2)^2 - x^2 \right)}  \geq e^{-\frac{\xi}{\sqrt{1-3\delta/2}}(a+\epsilon/2)^2},
\end{align*}
we deduce the sought inequality \eqref{eqn: appendix cutoff inequality near sing}.

\subsubsection*{Analysis away from the singularity: proof of \eqref{eqn: appendix cutoff inequality away sing}}

We now turn to the proof of \eqref{eqn: appendix cutoff inequality away sing}. We use the following boundary Carleman estimate from \cite[Proposition 3.1]{beauchard2020minimal}.

\begin{proposition}\label{prop: appendix carleman beauchard}
    Let $T > 0$, $\alpha, \beta \in \mathbb{R}$, $\alpha < \beta$, $q \in C^1 \left([\alpha,\beta],\mathbb{R} \right)$, $\xi > 0$ and $\psi \in C^2((0,T);C^4([\alpha,\beta]))$ a function satisfying for every $x \in (\alpha,\beta)$
    \begin{align}
        \begin{array}{lllll}
            \displaystyle \underset{t \rightarrow 0^+}{\lim} \underset{x \in [\alpha,\beta]}{\inf} \psi(t,x) &=& \displaystyle \underset{t \rightarrow T^-}{\lim} \underset{x \in [\alpha,\beta]}{\inf}\psi(t,x)   & = & \infty,  \\[12pt]
            \displaystyle \underset{t \rightarrow 0^+}{\lim} \partial_x \psi(t,x) e^{-\psi(t,x)} &=& \underset{t \rightarrow T^-}{\lim} \partial_x \psi(t,x) e^{-\psi(t,x)}  & = & 0.
        \end{array}
    \end{align}
    Then, for any solution $f$ of system 
    \begin{align}\label{stm : appendix adjoint system non singular classical d=1 fourier}
    \left\{ \begin{array}{lcll}
     \partial_t f -\partial_x^2f + \xi^2q(x)^2 f & = & f_\xi, & (t,x) \in (0,T) \times (\alpha,\beta),\\[6pt]
     f(t,x) & = & 0, & t \in (0,T), \ x \in \{\alpha,\beta\}, \\[6pt]
     f(0,x) & = & f_0(x), & x \in (\alpha,\beta),
    \end{array} \right.
    \end{align}
    with $f_0 \in H_0^1(\alpha,\beta)$, $f_\xi \in L^2((0,T) \times (\alpha,\beta))$, the function $g = f e^{-\psi}$ satisfies
    \begin{align}\label{eqn: appendix carleman estimate beauch}
        2\int_0^T \left[ |\partial_x g|^2 \partial_x\psi\right]_{x = \alpha}^{x = \beta} \ dt + \int_0^T \int_\alpha^\beta \left( -4\partial_x^2 \psi |\partial_x g|^2 + |g|^2 G_\psi \right) \ dx \ dt \leq \int_0^T \int_\alpha^\beta |f_\xi e^{-\psi}|^2 \ dx \ dt, 
    \end{align}
    where
    \begin{align}
        G_\psi(t,x) = 2\partial_x \psi \partial_x F_\psi - \partial_t F_\psi + \partial_x^4 \psi,
    \end{align}
    with
    \begin{align}
        F_\psi(t,x) = \partial_t\psi - |\partial_x \psi|^2 + \xi^2q(x)^2.
    \end{align}
\end{proposition}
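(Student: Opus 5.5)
This is the one-dimensional boundary Carleman estimate of Beauchard–Dardé–Ervedoza, so I would reproduce its proof by conjugation and splitting of the conjugated operator, as in the proof of Proposition \ref{prop: boundary carleman classical}. I would first argue for smooth data ($f_0$ in the domain, $f_\xi$ smooth) and conclude by density, since both sides of \eqref{eqn: appendix carleman estimate beauch} are continuous in the relevant norms. Set $g = fe^{-\psi}$. Then $\partial_t f = (\partial_t g + \partial_t\psi\, g)e^{\psi}$ and $\partial_x^2 f = (\partial_x^2 g + 2\partial_x\psi\,\partial_x g + (\partial_x^2\psi + |\partial_x\psi|^2)g)e^{\psi}$. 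Hence $g$ satisfies
\begin{align*}
P_\psi g := \partial_t g - \partial_x^2 g - 2\partial_x\psi\,\partial_x g - \partial_x^2\psi\, g + F_\psi g = f_\xi e^{-\psi},
\end{align*}
with $F_\psi = \partial_t\psi - |\partial_x\psi|^2 + \xi^2 q^2$. The growth assumption $\inf_x\psi\to\infty$ at $t=0^+$ and $t=T^-$ gives $g(t)\to 0$ at both ends, and $g$ vanishes at $x=\alpha,\beta$.

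Next I split $P_\psi g = \mathsf{S}g + \mathsf{A}g$ into a formally symmetric part $\mathsf{S}g = -\partial_x^2 g + F_\psi g$ and a skew part $\mathsf{A}g = \partial_t g - 2\partial_x\psi\,\partial_x g - \partial_x^2\psi\, g$. The term $-\partial_x^2\psi\,g$ is exactly what makes $-2\partial_x\psi\,\partial_x - \partial_x^2\psi$ antisymmetric. The identity $\|P_\psi g\|^2 = \|\mathsf{S}g\|^2 + \|\mathsf{A}g\|^2 + 2\langle \mathsf{S}g,\mathsf{A}g\rangle \ge 2\langle \mathsf{S}g,\mathsf{A}g\rangle$ in $L^2((0,T)\times(\alpha,\beta))$ then reduces everything to computing the cross term.

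That computation splits into six integrals:
\begin{enumerate}[label = (\roman*)]
\item $\langle -\partial_x^2 g,\partial_t g\rangle$ integrates to $\frac12[\|\partial_x g\|^2]_0^T$. This vanishes by the second limit assumption: $\partial_x g = (\partial_x f - \partial_x\psi f)e^{-\psi}$ tends to $0$ at both ends.
\item $\langle -\partial_x^2 g, -2\partial_x\psi\,\partial_x g\rangle$ gives the boundary term $[\partial_x\psi|\partial_x g|^2]_\alpha^\beta$ and the interior term $-\int\partial_x^2\psi|\partial_x g|^2$.
\item $\langle -\partial_x^2 g, -\partial_x^2\psi\, g\rangle$ gives, after two integrations by parts using $g=0$ on the boundary, $-\int\partial_x^2\psi|\partial_x g|^2 + \frac12\int\partial_x^4\psi|g|^2$.
\item $\langle F_\psi g,\partial_t g\rangle$ gives $-\frac12\int\partial_t F_\psi|g|^2$, the time boundary terms vanishing since $g\to0$.
\item $\langle F_\psi g,-2\partial_x\psi\,\partial_x g\rangle$ gives $\int\partial_x(F_\psi\partial_x\psi)|g|^2$.
\item $\langle F_\psi g,-\partial_x^2\psi\, g\rangle$ gives $-\int F_\psi\partial_x^2\psi|g|^2$.
\end{enumerate}
In (v) and (vi) the terms $F_\psi\partial_x^2\psi$ cancel, leaving $\int\partial_x\psi\,\partial_x F_\psi|g|^2$.

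Summing, I get $2\langle \mathsf{S}g,\mathsf{A}g\rangle = 2\int_0^T[\partial_x\psi|\partial_x g|^2]_\alpha^\beta + \iint(-4\partial_x^2\psi|\partial_x g|^2 + (2\partial_x\psi\,\partial_x F_\psi - \partial_t F_\psi + \partial_x^4\psi)|g|^2)$. This is exactly the left-hand side of \eqref{eqn: appendix carleman estimate beauch} with $G_\psi$ as stated, which proves the proposition.

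The algebra is routine; the only delicate point is justifying that the time boundary terms at $t=0$ and $t=T$ vanish. This needs $g$, $\partial_x g$ and $F_\psi|g|^2$ to tend to zero in the appropriate sense, even though $\psi$ blows up there. For that I would rely on parabolic regularity of $f$ on compact time subintervals together with the two limit hypotheses. Concretely, I would first integrate over $(t_1,t_2)\subset\subset(0,T)$ and then let $t_1\to0$, $t_2\to T$, using $e^{-\psi}\to0$ uniformly to dominate any polynomial growth of $\partial_t\psi$.
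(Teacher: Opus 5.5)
Your proposal is correct and follows the standard route. The paper does not reprove this estimate but quotes \cite[Proposition 3.1]{beauchard2020minimal}, whose argument is mirrored in the paper's own proof of Proposition~\ref{prop: appendix carleman near sing}: it uses the same splitting into the symmetric part $-\partial_x^2+F_\psi$ and the skew part $\partial_t-2\partial_x\psi\,\partial_x-\partial_x^2\psi$, and your six cross-term integrals correctly sum to the stated left-hand side with $G_\psi$. The one point to tighten is the one you already flag. The time-boundary term $\tfrac12\int F_\psi|g|^2\,dx$ contains $\partial_t\psi\,e^{-2\psi}|f|^2$, which the stated hypotheses do not control, and pointwise limits of $\partial_x\psi\,e^{-\psi}$ do not by themselves give $\|\partial_x g(t)\|_{L^2}\to 0$. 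So the growth bound on $\partial_t\psi$ and the uniformity in $x$ that you use are extra assumptions; they are harmless for the weights $\theta(t)\phi(x)$ with $\theta\sim 1/t$, $1/(T-t)$ to which the paper actually applies the proposition.
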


We choose as a Carleman weight in Proposition \ref{prop: appendix carleman beauchard}, the function 
\begin{align}
    \psi_\xi(t,x) = A\xi\theta(t) + \theta(t) - \sqrt{\xi}\theta(t)\left( \frac{(x-a)^2}{2} - 2L(x-a)\right),
\end{align}
where $A > 0$ is arbitrary, and $\theta \in C^\infty(0,T)$, $T < 4$, satisfies 
\begin{align}\label{eqn: appendix def theta}
    \theta(t) = \left\{
    \begin{array}{ll}
        1/t, & t \in (0,T/4),  \\
        1 & t \in (T/3,2T/3),\\
        1/(T-t) & t \in t \in (3T/4,T),\\
        \geq 1, & t \in (0,T).
    \end{array}
    \right.
\end{align}

This choice is inspired by the one of \cite[Proposition 3.5]{beauchard2020minimal}, and is very much well-suited for us. Indeed, on $(a,L)$ the function $\psi_\xi$ is positive, increasing and concave. Thus, it produces a boundary term that is significant only at $x = a$, which is precisely where the cutoff $\chi_2$ vanishes. Moreover, it will produce for large $\xi$ a cost of the form $e^{\xi A}$, with $A$ arbitrary small. \\

Observe that $f_2 = \chi_2 f$, with $f$ solution of system \eqref{stm : appendix adjoint system classical d=1 fourier}, solves system \eqref{stm : appendix adjoint system non singular classical d=1 fourier} with $q(x) = x$ on $(a,L)$, and with a source term
\begin{align}\label{eqn: appendix second membre}
    f_\xi = \frac{1/4 - \nu^2}{x^2}f_2 - \partial_x^2\chi_2 f - 2 \partial_x \chi_2 \partial_x f. 
\end{align}

Let us make some preliminary computations before applying Proposition \ref{prop: appendix carleman beauchard}. We have
\begin{align}\label{eqn: appendix computation psi}
    \begin{array}{llll}
        \partial_x \psi_\xi (t,x) &=& -\sqrt{\xi}\theta(t) \left( x - a - 2L\right)&  > 0  \text{ on } (a,L), \\[8pt]
        \partial_x^2 \psi_\xi (t,x) &=& -\sqrt{\xi}\theta(t) & < 0 \text{ on } (a,L),\\[8pt]
        \partial_x \psi_\xi (t,a) &=& 2L\sqrt{\xi}\theta(t) & > 0 \\[8pt]
        \partial_x \psi_\xi (t,L) &=& \sqrt{\xi}\theta(t)(L+a) & > 0,  \\[8pt]
        \partial_t \psi_\xi(t,x) &=& A \xi \theta'(t) + \theta'(t) - \sqrt{\xi}\theta'(t)\left( \displaystyle\frac{(x-a)^2}{2} - 2L(x-a)\right).    
    \end{array}
\end{align}

It follows that
\begin{align*}
    \begin{array}{lll}
            F_{\psi_\xi} &=& A \xi \theta'(t) + \theta'(t) - \sqrt{\xi}\theta'(t)\left( \displaystyle\frac{(x-a)^2}{2} - 2L(x-a)\right) - \xi \theta(t)^2(x-a-2L)^2 + \xi^2 x^2,\\[8pt]
            \partial_x F_{\psi_\xi} &=& (a+2L-x) \left(\sqrt{\xi} \theta'(t) + 2\xi \theta(t)^2 \right) + 2\xi^2 x, \\[8pt]
            \partial_t F_{\psi_\xi} &=& A\xi \theta''(t) + \theta''(t) - \sqrt{\xi}\theta''(t)\left( \displaystyle\frac{(x-a)^2}{2} - 2L(x-a)\right) - 2\xi \theta'(t)\theta(t)(x-a-2L)^2,
    \end{array}
\end{align*}
and finally
\begin{align*}
    G_{\psi_\xi} &= (2L+a-x)^2\left(4\xi \theta'(t)\theta(t) + 4\xi^{3/2}\theta(t)^3 \right) + 4\xi^{5/2}\theta(t)x(2L+a-x) - A\xi \theta''(t) - \theta''(t) \notag \\
    &+ \sqrt{\xi}\theta''(t)\left( \displaystyle\frac{(x-a)^2}{2} - 2L(x-a)\right).
\end{align*}

Using that there exists $C > 0$ such that for every $t \in (0,T)$,
\begin{align*}
    |\theta'(t)| \leq C\theta(t)^2, \quad \quad |\theta''(t)| \leq C\theta(t)^3,
\end{align*}
we have, for $\xi$ sufficiently large, 
\begin{align}\label{eqn: appendix estimate G i}
    \begin{array}{lll}
      \left| (2L+a-x)^2 4\xi \theta'(t)\theta(t) - A\xi \theta''(t) - \theta''(t) 
    + \sqrt{\xi}\theta''(t)\left( \displaystyle\frac{(x-a)^2}{2} - 2L(x-a)\right) \right|   &\leq&  C\xi \theta(t)^3.
    \end{array}
\end{align}

Observe that we also have
\begin{align*}
    \begin{array}{lll}
       4(2L+a-x)^2 \xi^{3/2}\theta(t)^3    & \geq & 4(L+a)^2\xi^{3/2}\theta(t)^3, \\[8pt]
      4\xi^{5/2}\theta(t)x(2L+a-x) &\geq& 0,
    \end{array}
\end{align*}
from which we obtain, combined with \eqref{eqn: appendix estimate G i}, that for every $\xi$ sufficiently large, for every $(t,x) \in (0,T) \times (a,L)$,
\begin{align}\label{eqn: appendix lower bound G psi}
    G_{\psi_\xi}(t,x) \geq 2(L+a)^2\xi^{3/2}\theta(t)^3.
\end{align}

We can now apply Proposition \ref{prop: appendix carleman beauchard} to $f_2 = \chi_2f$. Observe that the boundary term at $x = a$ in \eqref{eqn: appendix carleman estimate beauch} vanishes due to the fact that $\chi_2 = 0$ on $(a,a+\epsilon/4)$, and the remaining boundary term at $x=L$ is positive due to \eqref{eqn: appendix computation psi}. Thus, we obtain from Proposition \ref{prop: appendix carleman beauchard}, using \eqref{eqn: appendix lower bound G psi} together with  \eqref{eqn: appendix second membre} and \eqref{eqn: appendix computation psi}, with $g = f_2e^{-\psi_\xi}$,
\begin{align*}
    \int_0^T \int_a^L 4\sqrt{\xi}\theta |\partial_x g|^2 &+ 2(L+a)^2\xi^{3/2}\theta^3|g|^2   \ dx \ dt \\
    &\leq \int_0^T \int_a^L \left| \frac{1/4 - \nu^2}{x^2}f_2 - \partial_x^2\chi_2 f - 2 \partial_x \chi_2 \partial_x f \right|^2 e^{-2\psi_\xi}\ dx \ dt. 
\end{align*}

The term in $1/x^2$ is bounded on $(a,L)$, so that by triangular and young inequalities we obtain, using that $\chi_2'$ and $\chi_2''$ are supported in $(a+\epsilon/4, a+\epsilon/2)$, 
\begin{align}\label{eqn: appendix almost final estimate away}
    \int_0^T \int_a^L 4\sqrt{\xi}\theta |\partial_x g|^2 &+ 2(L+a)^2\xi^{3/2}\theta^3|g|^2   \ dx \ dt \notag \\
    &\leq C \int_0^T \int_a^L |g|^2 + C \int_0^T \int_{a+\epsilon/4}^{a+\epsilon/2} \left( |f|^2 + |\partial_x f|^2 \right)e^{-2\psi_\xi}  \ dx \ dt, 
\end{align}
for some constant $C> 0$ depending on the cutoff and $\epsilon$.\\

The first term  of the right-hand side above can be absorbed by the second term of left-hand side for large $\xi$, observing that $\theta \geq 1$. The first order term in the right-hand side above can be bounded from above by Cacciopoli's inequality (see \textit{e.g.} \cite[Lemma 2.9]{ervedoza2008control}) since $\psi_\xi$ diverges as $t \rightarrow 0,T$,
\begin{align*}
    \int_0^T \int_{a+\epsilon/4}^{a+\epsilon/2} |\partial_x f|^2 e^{-2\psi_\xi}  \ dx \ dt \leq c(\epsilon) \int_0^T \int_{a+\epsilon/8}^{a+5\epsilon/8} |f|^2 \ dx \ dt.
\end{align*}

We thus deduce from \eqref{eqn: appendix almost final estimate away}, replacing $g$ by its definition, and using that it depends on the cutoff $\chi_2$ that values $1$ on $(a+\epsilon/2,L)$,
\begin{align}\label{eqn: appendix one of final estimate}
    2(L+a)^2\xi^{3/2} \int_0^T \int_{a+\epsilon/2}^L  |f|^2e^{-2\psi_\xi}   \ dx \ dt \leq c(\epsilon) \int_0^T \int_a^b |f|^2 \ dx \ dt,
\end{align}
which holds for every $\xi$ large enough. Now, for any $c> 1$, for every $\xi$ sufficiently large, for every $(t,x) \in (T/3,2T/3) \times (a,L)$, we have, since $\theta(t)=1$ on $(T/3,2T/3)$,
\begin{align*}
    \psi_\xi(t,x) &= A\xi + 1 - \sqrt{\xi}\left( \frac{(x-a)^2}{2} - 2L(x-a)\right) \\
    &\leq c(A\xi + 1).
\end{align*}
We thus obtain from \eqref{eqn: appendix one of final estimate} for some new constant $C'(\epsilon)$, and for every $\xi$ large enough,
\begin{align}
    \int_{T/3}^{2T/3} \int_{a+\epsilon/2}^L  |f|^2 \ dx \ dt \leq C'(\epsilon)e^{2cA\xi} \int_0^T \int_a^b |f|^2 \ dx \ dt.
\end{align}

This is exactly the sought estimate \eqref{eqn: appendix cutoff inequality away sing} by taking $A = \epsilon/2c$. \\

\subsection{Proofs of Proposition \ref{prop: appendix carleman near sing}}\label{section: appendix proof of proposition}

In the present proof, to simplify the reading, we shall denote by $Q := (0,T) \times (0,L)$, and use the notations $u_s = \partial_s u$ and $c(\nu) := \nu^2 - 1/4$. We follow the computations of \cite[Proposition 5]{cannarsa2014null}. \\

Recall our Carleman weight $\psi$, introduced in \eqref{eqn: appendix def carleman weight near sing},
\begin{align*}
    \psi(t,x) = \frac{M\theta(t)}{2}(L^2 - x^2),
\end{align*}
with $\theta(t)$ defined in \eqref{eqn: appendix def theta near sing}. \\

Let $f \in C^0([0,T];L^2(0,L)) \cap L^2((0,T);H_0^1(0,L))$ be a solution of system \eqref{stm : appendix adjoint system classical d=1 fourier source}, and set
\begin{align*}
    g(t,x) = f(t,x) e^{\psi(t,x)}.
\end{align*}

Setting $P_\psi = e^{-\psi}P_\xi(e^{\psi} \cdot)$, with
\begin{align*}
    P_\xi = \partial_t -\partial_x^2 + \xi^2x^2  + \displaystyle\frac{c(\nu)}{x^2},
\end{align*}

we obtain that 
\begin{align*}
    P_\psi g = \mathsf{P}_\xi^+g + \mathsf{P}_\xi^- g,
\end{align*}
with 
\begin{align*}
    \mathsf{P}_\xi^+g &= (\psi_t - \psi_x^2)g - g_{xx} + \left[ \xi^2x^2 + \frac{c(\nu)}{x^2} \right]g, \\[8pt]
    \mathsf{P}_\xi^-g &= g_t - 2\psi_xg_x - \psi_{xx}g.
\end{align*}
Thus, 
\begin{align}\label{eqn: appendinx bound cross product near sing}
    \langle \mathsf{P}_\xi^+g,\mathsf{P}_\xi^-g \rangle \leq \frac{1}{2}\int_Q e^{-2\psi}|f_\xi|^2 \ dQ.
\end{align}

Due to the properties of our Carleman weight, we have $\langle \mathsf{P}_\xi^+g,\mathsf{P}_\xi^-g \rangle  = D + B$, with the distributed part given by 
\begin{align}\label{eqn: appendix def distributed part}
    D &= -2\int_Q \psi_{xx}g_x^2 \ dQ - \int_Q \psi_{xxx}gg_x \ dQ - \frac{1}{2}\int_Q(\psi_{tt} - 2 \psi_x\psi_{xt})g^2 \ dQ \notag \\
    &+ \int_Q (\psi_t - \psi_x^2)_x \psi_x g^2 \ dQ + \int_Q \left[ \xi^2x^2 + \frac{c(\nu)}{x^2}\right]_x \psi_x g^2 \ dQ,
\end{align}
and the boundary terms given by (we don't disclose the time-dependent boundary terms since they vanish due to our choice of $\theta$ \eqref{eqn: appendix def theta near sing}; see \textit{e.g.} \cite[Proposition 5]{cannarsa2014null}) 
\begin{align}
    B = \left[ \int_0^T \psi_x g_x^2 \ dt \right]_{x=0}^{x=L}.
\end{align}

We need to bound from below the distributed part $D$. Before doing so, let us observe that by definition \eqref{eqn: appendix def theta near sing} of $\theta$, there exists $c(T) > 0$ such that
\begin{align}\label{eqn: appendix properties theta near sing}
    |\theta'(t)| \leq c(T)\theta(t)^{1+1/k}, \quad |\theta''(t)| \leq c(T)\theta(t)^{1+2/k}.
\end{align}

Moreover, we compute
\begin{align}\label{eqn: appendix derivatives weight}
    \psi_x = -M\theta(x)x, \quad \psi_{xx} = -M\theta(t), \quad \psi_t = \frac{M}{2} \theta_t(L^2-x^2).
\end{align}

Using \eqref{eqn: appendix derivatives weight}, we obtain from \eqref{eqn: appendix def distributed part}
\begin{align}
    D = \int_Q \left[ 2M\theta \left( g_x^2 + \frac{c(\nu)}{x^2}g^2\right) + 2M^3\theta^3x^2g^2 + 2M^2\theta_t\theta x^2 g^2 - \frac{M}{4}\theta_{tt}(L^2-x^2)g^2 - 2\xi^2x^2M\theta g  \right] \ dQ.
\end{align}

Let us now choose $\delta > 0$ arbitrary small. Then, by \eqref{eqn: appendix properties theta near sing}, there exists a constant $c(\delta,T) =2c(T)/\delta> 0$ such that, for every $M \geq c(\delta,T)$,
\begin{align*}
    \int_Q \left[ 2M^3\theta^3x^2g^2 + 2M^2\theta_t\theta x^2 g^2  \right] \ dQ \geq \int_Q (2-\delta) M^3\theta^3x^2g^2 \ dQ.
\end{align*}

Thus, for every $M \geq c(\delta,T)$, 
\begin{align}\label{eqn: appendix first lower bound distributed part}
    D \geq \int_Q \left[ 2M\theta \left( g_x^2 + \frac{c(\nu)}{x^2}g^2\right) + (2-\delta)M^3\theta^3x^2g^2 - \frac{M}{4}\theta_{tt}(L^2-x^2)g^2 - 2\xi^2x^2M\theta g^2  \right] \ dQ.
\end{align}

Recall the Hardy-Poincaré inequality. For every $m > 0$ and $0 <\eta < 2$, there exists a constant $C_0$ such that
\begin{align*}
    \int_0^L g_x^2 - \frac{g^2}{4x^2} \ dx \geq m\int_0^L \frac{g^2}{x^\eta} \ dx - C_0 \int_0^L g^2 \ dx.
\end{align*}
We thus deduce from \eqref{eqn: appendix first lower bound distributed part} and the above inequality with $m = 1$ that, since $c(\nu) > -1/4$,
\begin{align}\label{eqn: appendix lower bound D i}
    D \geq \int_Q &\left[ M\theta \left( g_x^2 + \frac{c(\nu)}{x^2}g^2\right) + M\theta \frac{g^2}{x^\eta} - C_0M\theta g^2 \right. \\
    &\left. + (2-\delta)M^3\theta^3x^2g^2 - \frac{M}{4}\theta_{tt}(L^2-x^2)g^2 - 2\xi^2x^2M\theta g^2  \right] \ dQ. \notag
\end{align}

We now fix $k = 1 + 2/\eta$ in the definition \eqref{eqn: appendix def theta near sing} of $\theta$. We also set $q = k$, and $q' = k/(k-1)$ its conjugate exponent. We observe that 
\begin{align}\label{eqn: appendix def exponent}
    q(1+2/k - 1/q') = 3 \quad \text{and} \quad \eta q/q' = 2.
\end{align}
Using \eqref{eqn: appendix properties theta near sing}, we have for some $\epsilon > 0$,
\begin{align*}
    &\left| \int_Q \left[ - C_0M\theta - \frac{M}{4}\theta_{tt}(L^2-x^2) \right]g^2 \ dQ \right| \leq c(T)L^2M \int_Q \theta^{1+2/k} g^2 \ dQ \\[8pt]
    &= c(T)L^2M \int_Q \left( \frac{1}{\epsilon}\theta^{1+2/k - 1/q'} x^{\eta/q'} g^{2/q} \right) \left( \epsilon \theta^{1/q'}x^{-\eta/q'}g^{2/q'} \right) \ dQ \\
    &\leq \frac{c(\epsilon)c(T)L^2M}{\epsilon^q} \int_Q \theta^{q(1+2/k-1/q')}x^{\eta q/q'}g^2 \ dQ + \epsilon^{q'}c(T)L^2M \int_Q \theta \frac{g^2}{x^\eta} \ dQ \\
    &=  \frac{c(\epsilon)c(T)L^2M}{\epsilon^q} \int_Q \theta^3x^2g^2 \ dQ + \epsilon^{q'}c(T)L^2M \int_Q \theta \frac{g^2}{x^\eta} \ dQ, \quad \text{by \eqref{eqn: appendix def exponent}.}
\end{align*}

We plug the above estimate into \eqref{eqn: appendix lower bound D i}, choosing $\epsilon$ such that $1-\epsilon^{q'}c(T)L^2 = 1/2$, to obtain 
\begin{align}\label{eqn: appendix lower bound D ii}
    D \geq \int_Q &\left[ M\theta \left( g_x^2 + \frac{c(\nu)}{x^2}g^2\right) + \frac{M\theta}{2} \frac{g^2}{x^\eta} + (2-\delta)M^3\theta^3x^2g^2 - 2\xi^2x^2M\theta g^2 -  \frac{c(\epsilon)c(T)L^2M}{\epsilon^q} \theta^3x^2g^2 \right] \ dQ. 
\end{align}
    
Now, there exists a new constant $c'(\delta,T,\epsilon)$,
\begin{align*}
    c'(\delta,T,\epsilon) := \left( \frac{c(\epsilon)c(T)L^2}{\delta \epsilon^q}\right)^{1/2},
\end{align*}
such that for every $M \geq c'(\delta,T,\epsilon)$,
\begin{align*}
    (2-\delta)M^3 - \frac{c(\epsilon)c(T)L^2M}{\epsilon^q} \geq 2(1-\delta)M^3.
\end{align*}

We thus deduce, for every $M \geq \max\{c(\delta,T),c'(\delta,T,\epsilon) \}$,
\begin{align}
    D \geq \int_Q &\left[ M\theta \left( g_x^2 + \frac{c(\nu)}{x^2}g^2\right) + \frac{M\theta}{2} \frac{g^2}{x^\eta} + 2(1-\delta)M^3\theta^3x^2g^2 - 2\xi^2x^2M\theta g^2\right] \ dQ. 
\end{align}
To conclude on the distributed part, we are left with dealing with the terms in $x^2$. First, by definition \eqref{eqn: appendix def theta near sing} of $\theta$, we have $\theta(t) \leq \theta(t)^3$ for every $t \in (0,T)$. Indeed, $\theta(t)^2 \geq 1$. We thus deduce that, 
\begin{align}\label{eqn: appendix lower bound D iii}
    D \geq \int_Q &\left[ M\theta \left( g_x^2 + \frac{c(\nu)}{x^2}g^2\right) + \frac{M\theta}{2} \frac{g^2}{x^\eta} + 2(1-\delta)M^3\theta^3x^2g^2 - 2\xi^2x^2M\theta^3 g^2\right] \ dQ. 
\end{align}
Next, we ask that 
\begin{align*}
    2(1-\delta)M^3 - 2\xi^2M \geq \delta M^3.
\end{align*}
This holds as long as 
\begin{align*}
    M \geq \frac{\xi}{\sqrt{1-3\delta/2}}.
\end{align*}
But recall that we must have $M \geq \max\{c(\delta,T),c'(\delta,T,\epsilon) \}$. Thus, for some $\xi_0 > 0$ sufficiently large so that 
\begin{align*}
    \frac{\xi_0}{\sqrt{1-3\delta/2}} \geq \max\{c(\delta,T),c'(\delta,T,\epsilon) \},
\end{align*}
for every $\xi \geq \xi_0$, for every $M \geq \xi/(1-\delta)$, we obtain from \eqref{eqn: appendix lower bound D iii}
\begin{align}\label{eqn: appendix lower bound D iv}
    D \geq \int_Q &\left[ M\theta \left( g_x^2 + \frac{c(\nu)}{x^2}g^2\right) + \frac{M\theta}{2} \frac{g^2}{x^\eta} + \delta M^3\theta^3x^2g^2 \right] \ dQ,
\end{align}
which concludes the treatment of the distributed part $D$
We now briefly treat the boundary term $B$,  
\begin{align}
    B = \left[ \int_0^T \psi_x g_x^2 \ dt \right]_{x=0}^{x=L}.
\end{align}
By \eqref{eqn: appendix derivatives weight}, we have $\psi_x \leq 0$. Hence,  
\begin{align}\label{eqn: appendix bound boundary term near singularity}
    B \geq \int_0^T \psi_x(t,L) g_x(t,L)^2 \ dt  = - \int_0^T ML\theta(t)g_x(t,L)^2 \ dt
\end{align}

Combining \eqref{eqn: appendix lower bound D iv} and \eqref{eqn: appendix bound boundary term near singularity} with \eqref{eqn: appendinx bound cross product near sing} concludes the proof.

\printbibliography

@inproceedings{morancey2015approximate,
  title={Approximate controllability for a 2D Grushin equation with potential having an internal singularity},
  author={Morancey, Morgan},
  booktitle={Annales de l'Institut Fourier},
  volume={65},
  number={4},
  pages={1525--1556},
  year={2015}
}

@book{OlverHandbook2010,
author = {Olver, Frank W. and Lozier, Daniel W. and Boisvert, Ronald F. and Clark, Charles W.},
title = {NIST Handbook of Mathematical Functions},
year = {2010},
isbn = {0521140633},
publisher = {Cambridge University Press},
address = {USA},
edition = {1st},
}

@article{allonsius2021analysis,
  title={Analysis of the null controllability of degenerate parabolic systems of Grushin type via the moments method},
  author={Allonsius, Damien and Boyer, Franck and Morancey, Morgan},
  journal={Journal of Evolution Equations},
  volume={21},
  number={4},
  pages={4799--4843},
  year={2021},
  publisher={Springer}
}

@article{LaurentLeauthaud23,
     author = {Camille Laurent and Matthieu L\'eautaud},
     title = {On uniform controllability of {1D} transport equations in the vanishing viscosity limit},
     journal = {Comptes Rendus. Math\'ematique},
     pages = {265--312},
     publisher = {Acad\'emie des sciences, Paris},
     volume = {361},
     year = {2023},
     doi = {10.5802/crmath.405},
     language = {en},
}

@book{coron2007control,
  title={Control and nonlinearity},
  author={Coron, Jean-Michel},
  number={136},
  year={2007},
  publisher={American Mathematical Soc.}
}

@article{bateman1953higher,
  title={Higher transcendental functions, volume II},
  author={Bateman, Harry and Erd{\'e}lyi, Arthur},
  journal={Bateman Manuscript Project) Mc Graw-Hill Book Company},
  volume={410},
  year={1953}
}

@inproceedings{beauchard2020minimal,
  title={Minimal time issues for the observability of Grushin-type equations},
  author={Beauchard, Karine and Dard{\'e}, J{\'e}r{\'e}mi and Ervedoza, Sylvain},
  booktitle={Annales de l'Institut Fourier},
  volume={70},
  number={1},
  pages={247--312},
  year={2020}
}

@article{romankummer2025,
  title={Low-lying eigenvalues in the semiclassical limit of a Schrödinger operator with an inverse square potential, and non-asymptotic a-zeros of Kummer functions},
  author={Vanlaere, Roman},
  journal={arXiv preprint arXiv:2511.20025},
  year={2025}
}

@article{cannarsa2014null,
  title={Null controllability in large time for the parabolic Grushin operator with singular potential},
  author={Cannarsa, Piermarco and Guglielmi, Roberto},
  journal={Geometric control theory and sub-Riemannian geometry},
  pages={87--102},
  year={2014},
  publisher={Springer}
}

@article{beauchard20152d,
  title={2D Grushin-type equations: minimal time and null controllable data},
  author={Beauchard, Karine and Miller, Luc and Morancey, Morgan},
  journal={Journal of Differential Equations},
  volume={259},
  number={11},
  pages={5813--5845},
  year={2015},
  publisher={Elsevier}
}

@article{vanlaere2025grushinlike,
      title={Non Null-Controllability Properties of the Grushin-Like Heat Equation on 2D-Manifolds},
  author={Vanlaere, Roman},
  journal={arXiv preprint arXiv:2503.00997},
  year={2025}
}

@article{beauchard2014null,
  title={Null controllability of Grushin-type operators in dimension two.},
  author={Beauchard, Karine and Cannarsa, Piermarco and Guglielmi, Roberto},
  journal={Journal of the European Mathematical Society (EMS Publishing)},
  volume={16},
  number={1},
  year={2014}
}

@article{ball1977strongly,
  title={Strongly continuous semigroups, weak solutions, and the variation of constants formula},
  author={Ball, John M},
  journal={Proceedings of the American Mathematical Society},
  volume={63},
  number={2},
  pages={370--373},
  year={1977}
}

@article{duprez2020control,
  title={Control of the Grushin equation: non-rectangular control region and minimal time},
  author={Duprez, Michel and Koenig, Armand},
  journal={ESAIM: Control, Optimisation and Calculus of Variations},
  volume={26},
  pages={3},
  year={2020},
  publisher={EDP Sciences}
}

@article{darde2023null,
  title={Null-controllability properties of the generalized two-dimensional Baouendi-Grushin equation with non-rectangular control sets},
  author={Dard{\'e}, J{\'e}r{\'e}mi and Koenig, Armand and Royer, Julien},
  journal={Annales Henri Lebesgue},
  volume={6},
  pages={1479--1522},
  year={2023}
}

@article{CRMATH_2017__355_12_1215_0,
     author = {Koenig, Armand},
     title = {Non-null-controllability of the {Grushin} operator in {2D}},
     journal = {Comptes Rendus. Math\'ematique},
     pages = {1215--1235},
     publisher = {Elsevier},
     volume = {355},
     number = {12},
     year = {2017},
     doi = {10.1016/j.crma.2017.10.021},
     language = {en},
     url = {http://www.numdam.org/articles/10.1016/j.crma.2017.10.021/}
}

@article{LISSYprolate2025,
title = {A non-controllability result for the half-heat equation on the whole line based on the prolate spheroidal wave functions and its application to the Grushin equation},
journal = {Journal of Differential Equations},
volume = {433},
pages = {113306},
year = {2025},
issn = {0022-0396},
doi = {https://doi.org/10.1016/j.jde.2025.113306},
url = {https://www.sciencedirect.com/science/article/pii/S002203962500333X},
author = {Pierre Lissy}
}

@article{vazquez2000hardy,
  title={The Hardy inequality and the asymptotic behaviour of the heat equation with an inverse-square potential},
  author={Vazquez, Juan Luis and Zuazua, Enrike},
  journal={Journal of Functional Analysis},
  volume={173},
  number={1},
  pages={103--153},
  year={2000},
  publisher={Elsevier}
}

@article{anh2016null,
  title={Null controllability in large time of a parabolic equation involving the Grushin operator with an inverse-square potential},
  author={Anh, Cung The and Toi, Vu Manh},
  journal={Nonlinear Differential Equations and Applications NoDEA},
  volume={23},
  number={2},
  pages={20},
  year={2016},
  publisher={Springer}
}

@article{beauchard2014inverse,
  title={Inverse source problem and null controllability for multidimensional parabolic operators of Grushin type},
  author={Beauchard, Karine and Cannarsa, Piermarco and Yamamoto, Masahiro},
  journal={Inverse Problems},
  volume={30},
  number={2},
  pages={025006},
  year={2014},
  publisher={IOP Publishing}
}

@article{tamekue2022null,
  title={Null controllability of the parabolic spherical Grushin equation},
  author={Tamekue, Cyprien},
  journal={ESAIM: Control, Optimisation and Calculus of Variations},
  volume={28},
  pages={70},
  year={2022},
  publisher={EDP Sciences}
}

@article{vancostenoble2008null,
  title={Null controllability for the heat equation with singular inverse-square potentials},
  author={Vancostenoble, Judith and Zuazua, Enrique},
  journal={Journal of Functional Analysis},
  volume={254},
  number={7},
  pages={1864--1902},
  year={2008},
  publisher={Elsevier}
}

@article{ervedoza2008control,
  title={Control and stabilization properties for a singular heat equation with an inverse-square potential},
  author={Ervedoza, Sylvain},
  journal={Communications in Partial Differential Equations},
  volume={33},
  number={11},
  pages={1996--2019},
  year={2008},
  publisher={Taylor \& Francis}
}

@article{lissy2025null,
  title={Null controllability of the 1D heat equation with interior inverse square potential},
  author={Lissy, Pierre and Lourme, Tanguy},
  journal={arXiv preprint arXiv:2505.07302},
  year={2025}
}

@article{beauchard2017heat,
  title={Heat equation on the Heisenberg group: observability and applications},
  author={Beauchard, Karine and Cannarsa, Piermarco},
  journal={Journal of Differential Equations},
  volume={262},
  number={8},
  pages={4475--4521},
  year={2017},
  publisher={Elsevier}
}

@article{LEROUSSEAU20163193,
title = {Null-controllability of the Kolmogorov equation in the whole phase space},
journal = {Journal of Differential Equations},
volume = {260},
number = {4},
pages = {3193-3233},
year = {2016},
issn = {0022-0396},
doi = {https://doi.org/10.1016/j.jde.2015.09.062},
url = {https://www.sciencedirect.com/science/article/pii/S0022039615005367},
author = {Jérôme {Le Rousseau} and Iván Moyano},
}

@article{beauchardKolmog2015,
	author = {{Beauchard, Karine} and {Helffer, Bernard} and {Henry, Raphael} and {Robbiano, Luc}},
	title = {Degenerate parabolic operators of Kolmogorov type with a
          geometric control condition∗},
	DOI= "10.1051/cocv/2014035",
	url= "https://doi.org/10.1051/cocv/2014035",
	journal = {ESAIM: COCV},
	year = 2015,
	volume = 21,
	number = 2,
	pages = "487-512",
	month = "",
}

@article{koenigfractionalheat,
author = {Koenig, Armand},
title = {Lack of Null-Controllability for the Fractional Heat Equation and Related Equations},
journal = {SIAM Journal on Control and Optimization},
volume = {58},
number = {6},
pages = {3130-3160},
year = {2020},
doi = {10.1137/19M1256610},

URL = { https://doi.org/10.1137/19M1256610},
eprint = {},
}

@article{beauchardkolmogorov,
  title={Null controllability of Kolmogorov-type equations},
  author={Beauchard, Karine},
  journal={Mathematics of Control, Signals, and Systems},
  volume={26},
  number={1},
  pages={145--176},
  year={2014},
  publisher={Springer}
}

@article{DardeRoyerKolmogorov,
     author = {Jeremi Darde and Julien Royer},
     title = {Critical time for the observability of {Kolmogorov-type} equations},
     journal = {Journal de l'Ecole polytechnique Mathematiques},
     pages = {859--894},
     publisher = {Ecole polytechnique},
     volume = {8},
     year = {2021},
     doi = {10.5802/jep.160},
     language = {en},
     url = {https://jep.centre-mersenne.org/articles/10.5802/jep.160/}
}

@article{prandi2018quantum,
  title={Quantum confinement on non-complete Riemannian manifolds},
   volume={8},
   ISSN={1664-0403},
   url={http://dx.doi.org/10.4171/JST/226},
   DOI={10.4171/jst/226},
   number={4},
   journal={Journal of Spectral Theory},
   publisher={European Mathematical Society - EMS - Publishing House GmbH},
   author={Prandi, Dario and Rizzi, Luca and Seri, Marcello},
   year={2018},
   month=jul, pages={1221–1280} }

@incollection{berger2006spectre,
  title={Le spectre d'une vari{\'e}t{\'e} riemannienne},
  author={Berger, Marcel and Gauduchon, Paul and Mazet, Edmond},
  booktitle={Le spectre d’une vari{\'e}t{\'e} Riemannienne},
  pages={141--241},
  year={2006},
  publisher={Springer}
}

@article{cazacu2014controllability,
  title={Controllability of the heat equation with an inverse-square potential localized on the boundary},
  author={Cazacu, Cristian},
  journal={SIAM Journal on Control and Optimization},
  volume={52},
  number={4},
  pages={2055--2089},
  year={2014},
  publisher={SIAM}
}

@unpublished{DardeTrabut2025,
  title        = {A Precised Lebeau-Robbiano Type Strategy and Application to Boundary Observability of Two Multi-Dimensional Systems: The Cascade Heat System and the Grushin Equation},
  author       = {Dardé, Jérémi and Trabut, Mathilda},
  note         = {Work in progress},
  year         = {2025}
}

@article{miller2010direct,
  title={A direct Lebeau-Robbiano strategy for the observability of heat-like semigroups},
  author={Miller, Luc},
  journal={Discrete and Continuous Dynamical Systems-Series B},
  volume={14},
  number={4},
  pages={1465--1485},
  year={2010}
}

@article{jerison1999nodal,
  title={Nodal Sets of Sums of},
  author={Jerison, David},
  journal={Harmonic Analysis and Partial Differential Equations: Essays in Honor of Alberto P. Calderon},
  pages={223},
  year={1999},
  publisher={University of Chicago Press}
}

@book{agrachev2019comprehensive,
  title={A comprehensive introduction to sub-Riemannian geometry},
  author={Agrachev, Andrei and Barilari, Davide and Boscain, Ugo},
  volume={181},
  year={2019},
  publisher={Cambridge University Press}
}

@book{hardy1952inequalities,
  title={Inequalities},
  author={Hardy, Godfrey Harold and Littlewood, John Edensor and P{\'o}lya, George},
  year={1952},
  publisher={Cambridge university press}
}

@article{de2024weyl,
  title={Weyl formulae for some singular metrics with application to acoustic modes in gas giants},
  author={de Verd{\`\i}{\`e}re, Yves Colin and Dietze, Charlotte and de Hoop, Maarten V and Tr{\'e}lat, Emmanuel},
  journal={arXiv preprint arXiv:2406.19734},
  year={2024}
}

@article{dietze2024concentration,
  title={Concentration of eigenfunctions on singular Riemannian manifolds},
  author={Dietze, Charlotte and Read, Larry},
  journal={arXiv preprint arXiv:2410.20563},
  year={2024}
}

@article{dietze2025critical,
  title={The critical case for the concentration of eigenfunctions on singular Riemannian manifolds},
  author={Dietze, Charlotte},
  journal={arXiv preprint arXiv:2510.23520},
  year={2025}
}

@book{zabczyk2020mathematical,
  title={Mathematical control theory},
  author={Zabczyk, Jerzy},
  year={2020},
  publisher={Springer}
}

@book{berezin2012schrodinger,
  title={The Schr{\"o}dinger Equation},
  author={Berezin, Feliks Aleksandrovich and Shubin, Mikhail},
  volume={66},
  year={2012},
  publisher={Springer Science \& Business Media}
}

@article{chitour2024weyl,
  title={Weyl's law for singular Riemannian manifolds},
  author={Chitour, Yacine and Prandi, Dario and Rizzi, Luca},
  journal={Journal de Math{\'e}matiques Pures et Appliqu{\'e}es},
  volume={181},
  pages={113--151},
  year={2024},
  publisher={Elsevier}
}

@article{boscain2016spectral,
  title={Spectral analysis and the Aharonov-Bohm effect on certain almost-Riemannian manifolds},
  author={Boscain, Ugo and Prandi, Dario and Seri, Marcello},
  journal={Communications in Partial Differential Equations},
  volume={41},
  number={1},
  pages={32--50},
  year={2016},
  publisher={Taylor \& Francis}
}

\end{document}